\newtheorem{theorem}{Theorem}[section]
\newtheorem{lemma}[theorem]{Lemma}
\newtheorem{proposition}[theorem]{Proposition}
\newtheorem{remark}[theorem]{Remark}
\numberwithin{equation}{section}
\newtheorem{definition}[theorem]{Definition}
\newtheorem{corollary}[theorem]{Corollary}
\begin{document}

\title[Brill-Noether loci on surface]
{ Geometry of certain Brill-Noether locus on a very general sextic surface and Ulrich bundles}

\author[D. Bhattacharya]{Debojyoti Bhattacharya }
\address{Indian Institute of Science Education and Research,Thiruvananthapuram,
Maruthamala PO, Vithura,
Thiruvananthapuram - 695551, Kerala, India}
\email{debojyotibhattacharya15@iisertvm.ac.in}

\keywords{vector bundles;  Ulrich bundles; moduli spaces; Brill-Noether loci; very general sextic surface.}







\begin{abstract}
Let $X \subset \mathbb P^3$ be a very general sextic surface over complex numbers.  In this paper we study certain Brill-Noether problems for moduli of rank $2$ stable bundles on $X$ and its relation with rank $2$ weakly Ulrich and Ulrich bundles. In particular, we  show the  non-emptiness of certain Brill-Noether loci and  using the geometry of the moduli and the notion of the Petri map on higher dimensional varieties,  we prove  the existence of components of expected dimension. We also give sufficient conditions for the existence of rank $2$ weakly Ulrich bundles $\mathcal E$ on $X$ with $c_1(\mathcal E) =5H$ and $c_2 \geq 91$ and  partially address the question of whether these conditions really hold. We then study the possible  implication of the existence of an weakly Ulrich bundle in terms of non-emptiness of Brill-Noether loci. Finally, using the existence of rank $2$ Ulrich bundles on $X$ we obtain some more non-empty Brill-Noether loci and investigate the possibility of existence of higher rank simple Ulrich bundles on $X$.  
\end{abstract}





\maketitle\section{Introduction and statement of the main results}
Let $X$ be a smooth, projective, irreducible  variety of dimension $n$ over $\mathbb C$  and $H$ be an ample divisor on $X$. Let $\mathcal M_{H} : = \mathcal M_{X,H}(r;c_1,...,c_s)$ be the moduli space of rank $r$, $H$-stable (in the sense of Mumford and Takemoto) vector bundles $\mathcal E$ on $X$ with fixed Chern classes $c_i(\mathcal E) = c_i \in H^{2i}(X,\mathbb Z)$ for $i =1,...,s : =\text{min}\{r,n\}$. A Brill-Noether locus $\mathcal W^{k}_{r,H}$ is a subvariety of $\mathcal M_H$ whose support correspond to points $\mathcal E \in \mathcal M_{H}$ having atleast $k+1$ independent global sections (i.e $h^0(X, \mathcal E) \geq k+1$). Brill-Noether theory describes the geometry of such varieties. Classical Brill-Noether theory has its roots in the $19$th century work of Brill and Noether and in modern language of vector bundles it's essentially the study of line bundles on algebraic curves (see \cite{ACGH}). For smooth irreducible projective curve $X$ over $\mathbb C$, the Brill-Noether locus of the moduli space of degree $d$ line bundles $L$ on $X$(i.e $\text{Pic}^{d}(X)$) is extensively studied for a very long period of time. In fact, if $X$ is a Petri curve (then $X$ is general in the sense of Brill-Noether theory), then answers to the  problems  like non-emptiness, irreducibility, connectedness, singular locus of the Brill-Noether loci are completely known (see \cite{ACGH}). As a systematic generalization of line bundles on algebraic curves, Brill-Noether problems on the moduli space of rank $r \geq 2$ stable vector bundles on algebraic curves for arbitrarily large $r$ are investigated by Newstead, Teixidor and other authors. For a detailed account of results in this case see \cite{alternativepetri} and references mentioned there. Moving to higher dimensional varieties,  G{\"o}ttsche, Hirschowitz in \cite{LG1} and M. He in \cite{HeMin} studied the Brill-Noether loci of moduli of stable bundles on $\mathbb P^2$, Yoshioka(\cite{Y1},\cite{Y2}), Markman(\cite{Markman}), Leyenson(\cite{Leyenson})   studied Brill-Noether problems on $K3$ surfaces,  Nakashima studied Brill-Noether problems for stable sheaves on projective varieties of higher dimension (\cite{Nakashima}). However, despite all this effort very little is known about the basic questions like whether the Brill-Noether locus is non-empty, connected, irreducible or has components of expected dimension and a lot of new properties  appear in this higher dimensional context making this new theory a growing field of current research. \\

In \cite{LC1}, as a  generalization of classical Brill-Noether theory it's shown that natural scheme structure can be given on the Brill-noether locus $\mathcal W^{k}_{r,H}$ on a smooth projective variety of dimension $n\geq 2$,  under the assumption that all higher order cohomology vanishes (i.e for all $\mathcal E \in \mathcal M_H, H^i(\mathcal E) = 0, \forall i \geq 2$) and several concrete examples like Brill-Noether problems for Mathematical Instanton bundles on $\mathbb P^3$ (cf.\cite{LC1}), stable vector bundles on Hirzebruch surfaces (cf.\cite{LC2}) are studied in recent times and several examples were given where expected dimension is not same as the actual dimension. We mention that in all the examples illustrated in \cite{LC1},\cite{LC2}, the canonical sheaf has no non-trivial global section.\\
 
Recently, In \cite{KS} authors have studied questions related to Brill-Noether loci of moduli space of rank $2$ stable bundles over a very general quintic surface in $\mathbb P^3$ (It's another concrete example of the construction given in \cite{LC1}), in which case  the canonical sheaf not only has nontrivial global sections, moreover it's also ample and  globally generated. In \cite{DS},\cite{S}, authors have studied questions related to geometry and topology of moduli of rank $2$ stable bundles over a very general sextic surface in $\mathbb P^3$. This motivates us to extend the investigation on Brill-Noether loci in case of very general sextic surfaces in $\mathbb P^3$.\\

In \cite{HartshorneUlrich}, the authors studied Ulrich bundles on nonsingular cubic surfaces in $\mathbb P^3$ and gave  necessary and sufficient conditions on the first chern class for the existence of a stable rank $r$ Ulrich bundle with the given first Chern class. In \cite{Coskunconstruction}, the authors have shown existence of simple rank $2$ Ulrich bundles on smooth quartic surfaces in $\mathbb P^3$ and in the process a set of sufficient conditions for the existence of rank $2$ weakly Ulrich bundles are studied, also it's shown that these conditions really hold. In \cite{Coskun1} existence of every even rank stable  Ulrich bundle on a smooth quartic surface in $\mathbb P^3$ with Picard number $1$ is established. Recently in \cite{Coskunsurvey},  the author gave a  classification of the stable Ulrich bundles on smooth quartic surfaces in $\mathbb P^3$ in Picard number $2$ in terms of first Chern class. In that very survey article it's mentioned that  there are no specific results for Ulrich bundles on surfaces in $\mathbb P^3$ of degree $d \geq 5$. In \cite{Casnati} it's shown that for smooth surfaces in $\mathbb P^3$ with degree $3 \leq d \leq 9$, the surface supports families of dimension $p$ of pairwise non-isomorphic, indecomposable, Ulrich bundles for arbitrarily large $p$. Also in \cite{Chiantini5} and \cite{ACM6} rank $2$ ACM bundles on a general quintic and sextic surface are classified. This motivates us to study weakly Ulrich bundles and Ulrich bundles on a very general sextic hypersurface in $\mathbb P^3$ and its implication to the non-emptiness question of the Brill-Noether theory on  such surfaces. \\

In this pursuit one can pose the following natural problems on a very general sextic surface:\\

$(i)$ Give bounds on $c_2$ such that the corresponding Brill-Noether loci $\mathcal W^{1}_{2}$ are non-empty.\

$(ii)$ Give bounds on $c_2$ such that the Brill-Noether loci $\mathcal W^{1}_{2}$ contain a smooth point and the irreducible component containing it has the expected dimension.\

$(iii)$ Give a lower bound on $c_2$ such that the Brill-Noether loci $\mathcal W^{0}_{2}$ are non-empty and every irreducible component of each such locus has the expected dimension.\

$(iv)$ Give if possible a set of sufficient conditions  for the existence of a rank $2$  weakly Ulrich bundle which is not Ulrich and investigate whether this conditions really hold. Study possible implication of existence of weakly Ulrich bundles in terms of non-emptiness of Brill-Noether loci.\

$(v)$ Study the  implication of existence of a rank $2$ Ulrich bundle in terms of non-emptiness of Brill-Noether locus.\

$(vi)$ Study the possibility of existence of higher rank simple Ulrich bundles.\\

In addressing the first three questions  we closely follow the techniques used in the case of quintic surface (cf.\cite{KS}). To be precise we use the techniques of  sheaves (in particular rank $2$ bundles) on algebraic surface and the theory of algebraic curves to give an answer to the first question, the geometric informations of the moduli  from \cite{S} and the injectivity of Petri map on surface to address the second question. Finally using techniques related to Cayley-Bacharach property on surface and techniques from \cite{S}, \cite{Simpson11}, we answer the third question. We here mention that topological questions like connectedness, irreducibility of corresponding Brill-Noether loci can also be raised in this context, although that is not attempted here. For study related to Ulrich bundles we mainly adapt the techniques used in \cite{Coskunconstruction}, \cite{Coskun1}.\\

To be precise, our main results are the following:\\

\begin{theorem}\label{PP1}

If $103 \leq c_2 \leq 107$, then $\mathcal W^{1}_{2} - \mathcal W^{2}_{2} \subset \mathcal M(2;5H,c_2)$ is non-empty and for $c_2 =55$, $\mathcal W^{11}_2- \mathcal W^{12}_2$ is non-empty.\\

\end{theorem}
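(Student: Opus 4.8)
The plan is to produce the required bundles by the Serre correspondence applied to a zero-dimensional scheme lying on a smooth curve $C \in |5H|$. Since $X$ is very general, $\mathrm{Pic}(X) = \mathbb{Z}H$, $K_X = 2H$ and $H^2 = 6$; for smooth $C \in |5H|$ adjunction gives $K_C = (K_X + C)|_C = 7H|_C$ and $g(C) = 106$. For a reduced $Z \subset C$ I would form the extension
\[
0 \to \mathcal{O}_X \to \mathcal{E} \to I_Z(5H) \to 0,
\]
whose existence as a locally free sheaf with $c_1(\mathcal{E}) = 5H$ and $c_2(\mathcal{E}) = \deg Z$ is governed by the Cayley--Bacharach condition for $Z$ with respect to $|K_X + 5H| = |7H|$. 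The three facts I would isolate first are: (i) because $H^1(\mathcal{O}_X) = 0$, splitting off $I_{C/X}(5H) = \mathcal{O}_X$ gives $h^0(\mathcal{E}) = 2 + h^0(\mathcal{O}_C((5H - Z)|_C))$; (ii) since $7H|_C = K_C$ and the restriction $H^0(X,7H) \to H^0(C,K_C)$ is surjective (as $H^1(\mathcal{O}_X(2H)) = 0$), the Cayley--Bacharach property for $Z \subset C$ is equivalent, via Serre duality on $C$, to the statement that no point of $Z$ is a base point of $|Z|$ --- in particular it holds whenever $|Z|$ is base-point-free; and (iii) stability of $\mathcal{E}$ is equivalent to $h^0(\mathcal{E}(-3H)) = 0$, which follows from $h^0(\mathcal{O}_C((2H - Z)|_C)) = 0$, automatic once $\deg Z > 2H\cdot 5H = 60$.

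For the range $103 \le c_2 \le 107$ I would then seek a base-point-free line bundle $A$ on $C$ of degree $c_2$ with $h^0(\mathcal{O}_C(5H) \otimes A^{-1}) = 0$, and take $Z$ a general member of $|A|$. Facts (ii) and (iii) yield a locally free, stable $\mathcal{E}$, and (i) gives $h^0(\mathcal{E}) = 2$, i.e. $\mathcal{E} \in \mathcal{W}^{1}_{2} - \mathcal{W}^{2}_{2}$. To find $A$ I would use that $\rho(106,1,c_2) = 2c_2 - 108 \ge 0$ in this range, so by the Brill--Noether existence theorem (valid on \emph{any} smooth curve) $W^1_{c_2}(C)$ is non-empty; a general member is base-point-free, and since the locus of classes with effective residual $\mathcal{O}_C(5H)\otimes A^{-1}$ (of degree $150 - c_2 \le 47 < g$) has dimension at most $47$, well below $\dim W^1_{c_2}(C)$, a general member also has non-effective residual. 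Here I would lean on the very generality of $X$ to guarantee that $C$ is Brill--Noether general enough for the base-point-freeness and the dimension count.

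The case $c_2 = 55$ is different in character. Now $\chi(\mathcal{E}) = 67 - c_2 = 12$ and $h^2(\mathcal{E}) = h^0(\mathcal{E}(-3H)) = 0$ for stable $\mathcal{E}$, so $h^0(\mathcal{E}) = 12$ is equivalent to $h^1(\mathcal{E}) = 0$; by (i) this means I need $Z$ of degree $55$ with $h^0(\mathcal{O}_C((5H-Z)|_C)) = 10$, i.e. a residual class in $W^9_{95}(C)$, together with $|Z|$ base-point-free and (since $55 < 60$) the extra stability condition $h^0(\mathcal{O}_C((2H - Z)|_C)) = 0$. The Brill--Noether number of $W^9_{95}$ is negative, so such a residual cannot exist on a general curve; instead I would build it from the special series that $C$ carries as a complete intersection in $\mathbb{P}^3$ --- the restrictions $kH|_C$, e.g. $2H|_C$ is a $g^9_{60}$ --- adjusted by an effective divisor and arranged so that $|Z|$ stays base-point-free. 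Exhibiting one such $\mathcal{E}$ with $h^1 = 0$ then places it in $\mathcal{W}^{11}_{2} - \mathcal{W}^{12}_{2}$.

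The main obstacle throughout is the tension between the two conditions on $Z$: Cayley--Bacharach forces $|Z|$ to be base-point-free, while the prescribed value of $h^0(\mathcal{O}_C((5H - Z)|_C))$ constrains the residual class. The naive way to hit the residual value --- adding general points to a fixed series --- always throws those points into the base locus and destroys Cayley--Bacharach, so both must be engineered at once on the curve $C$, which is not Brill--Noether general. This reconciliation is the crux, and it is hardest for $c_2 = 55$, where the residual Brill--Noether locus has negative expected dimension and one must use the geometry of $C \subset \mathbb{P}^3$ directly.
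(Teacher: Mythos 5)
Your framework for the range $103 \le c_2 \le 107$ is essentially the paper's: the Serre extension $0 \to \mathcal O_X \to \mathcal E \to \mathcal J_Z(5H) \to 0$ with $Z \in |A|$ for a base-point-free $A$ with $h^0(A)=2$ and non-effective residual $\mathcal O_C(5H)\otimes A^{-1}$ produces the same bundles as the paper's dual Lazarsfeld--Mukai construction $0 \to \mathcal O_X^{\oplus 2} \to \mathcal K^* \to \mathcal O_C(5H)\otimes L^* \to 0$, and your reductions (i)--(iii), the stability threshold $\deg Z \ge 61$, and the dimension count for the effective-residual locus are all correct. The genuine gap is the step you wave at with ``lean on the very generality of $X$ to guarantee that $C$ is Brill--Noether general enough for the base-point-freeness.'' A smooth $C \in |5H|$ is a complete intersection of type $(5,6)$ and is \emph{not} Brill--Noether general (e.g.\ $\mathcal O_C(1)$ is a $g^3_{30}$ with $\rho<0$), as you yourself concede in your last paragraph, so base-point-freeness of a general member of $W^1_d(C)-W^2_d(C)$ is not automatic. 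The paper closes exactly this hole: assuming every such pencil has a base point, stripping base points yields a family of base-point-free pencils of lower degree of dimension $\ge \rho(106,1,d)$, and this is ruled out using Keem's theorem (after showing $C$ has no $g^1_4$ because a complete intersection of degrees $5$ and $6$ has no $g^1_8$), Martens' and Mumford's theorems (after checking $C$ is not hyperelliptic, trigonal, bi-elliptic, or a plane quintic), and for $d=107$ a direct count. Without some such argument your first claim is unproved.

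For $c_2 = 55$ the gap is more serious: your plan requires a base-point-free $g^1_{55}$ on $C$ whose residual is a $g^9_{95}$, a locus of negative Brill--Noether number, and you offer only the suggestion of adjusting the restricted series $kH|_C$ by effective divisors without carrying out (or making plausible) the simultaneous satisfaction of exact residual dimension, base-point-freeness, and the stability condition $h^0(\mathcal O_C(2H|_C - Z))=0$. The paper avoids this entirely by a different and much shorter route: a general sextic surface is a linear pfaffian (Beauville), which by Casnati's lemma is equivalent to the existence of a rank $2$ Ulrich bundle $\mathcal E$ with $c_1(\mathcal E)=5H$; such a bundle automatically has $c_2=55$, is stable (no odd-rank Ulrich bundles exist on $X$, so it cannot be strictly semistable), and has exactly $h^0(\mathcal E)=12$, which places it in $\mathcal W^{11}_2 - \mathcal W^{12}_2$. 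Your direct approach would, if completed, recover the Mukai-sequence description of such Ulrich bundles, but as it stands it does not establish existence.
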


\begin{theorem} \label{PP2}

If $103 \leq c_2 \leq 107$, then  $\mathcal W^{1}_{2} \subset \mathcal M(2;5H,c_2)$ contains a smooth point and the irreducible component containing it has the actual dimension same as the expected dimension.\\

\end{theorem}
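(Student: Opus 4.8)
The plan is to show that the bundle produced by Theorem \ref{PP1} is a smooth point of $\mathcal{W}^1_2$ lying on a component of the expected dimension, by combining smoothness of the ambient moduli space at that point with injectivity of the Petri map. First I would fix, for each $c_2$ with $103\le c_2\le 107$, a point $[\mathcal E]\in \mathcal W^1_2-\mathcal W^2_2$ supplied by Theorem \ref{PP1}, so that $h^0(\mathcal E)=2$. Since $\mathcal E$ is $H$-stable with $\mu(\mathcal E)=15>0$ and $K_X=\mathcal O_X(2)$, Serre duality gives $h^2(\mathcal E)=h^0(\mathcal E^\vee\otimes K_X)=0$ because $\mathcal E^\vee\otimes K_X$ is stable of slope $-3<0$; hence $[\mathcal E]$ satisfies the hypotheses under which $\mathcal W^1_2$ carries its natural determinantal scheme structure in the sense of \cite{LC1}. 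Riemann--Roch gives $\chi(\mathcal E)=67-c_2$, so $h^1(\mathcal E)=c_2-65$, and the expected codimension of $\mathcal W^1_2$ is $(k+1)(k+1-\chi)=2h^1(\mathcal E)$; thus the expected dimension of $\mathcal W^1_2$ is $(4c_2-183)-2(c_2-65)=2c_2-53$. The decisive tool is the standard criterion for these loci: if $\mathcal M(2;5H,c_2)$ is smooth of the expected dimension at $[\mathcal E]$ and the Petri map
\[
\mu_{\mathcal E}\colon H^0(X,\mathcal E)\otimes H^1(X,\mathcal E^\vee\otimes K_X)\longrightarrow H^1(X,\mathcal E\otimes\mathcal E^\vee\otimes K_X),
\]
which is dual to the cup-product differential $\operatorname{Ext}^1(\mathcal E,\mathcal E)\to\operatorname{Hom}(H^0(\mathcal E),H^1(\mathcal E))$, is injective, then $\mathcal W^1_2$ is smooth of dimension $2c_2-53$ at $[\mathcal E]$, which is exactly the assertion.

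Next I would establish smoothness of the moduli space at $[\mathcal E]$. Because $K_X$ is effective, the obstruction group $\operatorname{Ext}^2(\mathcal E,\mathcal E)=\operatorname{Hom}(\mathcal E,\mathcal E\otimes K_X)^\vee$ never vanishes (it contains the $p_g=10$-dimensional trace part $H^0(K_X)$), so smoothness of the expected dimension is equivalent to vanishing of the trace-free obstruction $\operatorname{Ext}^2_0(\mathcal E,\mathcal E)\cong H^0\!\big(X,(\mathcal E\otimes\mathcal E^\vee)_0\otimes K_X\big)^\vee$. I would deduce this from the geometry of $\mathcal M(2;5H,c_2)$ recorded in \cite{S}: for $c_2$ in the stated range the moduli space is reduced of the expected dimension along the component through $[\mathcal E]$, and the bundles constructed in Theorem \ref{PP1} can be taken in its smooth locus, which gives $\dim_{[\mathcal E]}\mathcal M=\operatorname{ext}^1(\mathcal E,\mathcal E)=4c_2-183$ together with $\operatorname{Ext}^2_0(\mathcal E,\mathcal E)=0$.

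The heart of the argument, and the step I expect to be the main obstacle, is injectivity of $\mu_{\mathcal E}$. Here I would exploit the two sections directly. A general $s\in H^0(\mathcal E)$ vanishes on a zero-dimensional scheme $Z_s$ of length $c_2$ and gives $0\to\mathcal O_X\xrightarrow{s}\mathcal E\to I_{Z_s}(5H)\to 0$; tensoring with the locally free sheaf $\mathcal E^\vee\otimes K_X$ yields $0\to\mathcal E^\vee\otimes K_X\xrightarrow{\sigma_s}\mathcal E\otimes\mathcal E^\vee\otimes K_X\to I_{Z_s}\otimes\mathcal E^\vee(7H)\to 0$, whose cohomology sequence identifies the kernel of $\sigma_s^{\ast}\colon H^1(\mathcal E^\vee\otimes K_X)\to H^1(\mathcal E\otimes\mathcal E^\vee\otimes K_X)$ with the image of the connecting map out of $H^0(I_{Z_s}\otimes\mathcal E^\vee(7H))$, using $H^0(\mathcal E^\vee\otimes K_X)=0$. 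The plan is to show that each $\sigma_{s_i}^{\ast}$ is injective and that, as $s$ ranges over $H^0(\mathcal E)$, the resulting maps sit in sufficiently general position that no nontrivial relation $\sum_i s_i\otimes t_i$ lies in $\ker\mu_{\mathcal E}$. Concretely this reduces to a surjectivity statement for $H^0(\mathcal E\otimes\mathcal E^\vee\otimes K_X)\to H^0(I_{Z_s}\otimes\mathcal E^\vee(7H))$ and to the multiplication of sections along the curve $C=V(s_1\wedge s_2)\in|5H|$ carrying the cokernel of $\mathcal O_X^{\oplus 2}\xrightarrow{(s_1,s_2)}\mathcal E$.

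Controlling these connecting maps and the relative position of the two images is the crux of the proof; I expect it to follow from the stability of $\mathcal E$, Serre duality, the very general choice of $X$, and the generic position (a Cayley--Bacharach condition) of the zero-dimensional schemes $Z_s$, but the bookkeeping uniformly over the whole range $103\le c_2\le 107$ is what makes this step delicate. Once $\mu_{\mathcal E}$ is shown to be injective, the criterion of the first paragraph applies verbatim: $[\mathcal E]$ is a smooth point of $\mathcal W^1_2$ and the irreducible component through it has actual dimension equal to the expected dimension $2c_2-53$, completing the proof.
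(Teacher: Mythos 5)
Your overall architecture---smoothness of $\mathcal M(2;5H,c_2)$ at the point $[\mathcal E]$ produced by Theorem \ref{PP1}, plus injectivity of the Petri map---is exactly the paper's, and your numerics ($\chi(\mathcal E)=67-c_2$, expected dimension $2c_2-53$) are correct. However, both pillars are left unproven, and in each case the missing step is precisely the content of the paper's argument. For smoothness of the ambient moduli space you assert that the bundles of Theorem \ref{PP1} ``can be taken in its smooth locus,'' citing \cite{S}; but \cite{S} gives only \emph{generic} smoothness, which does not by itself exclude the possibility that the entire Brill--Noether family sits inside the obstructed locus. The paper closes this gap by twisting by $\mathcal O_X(-2)$ to land in $\mathcal M(2;H,c_2-36)$, invoking the classification of potentially obstructed bundles (Proposition \ref{p8}): type $(i)$ is ruled out since $H^0(\mathcal E(-2))=0$, and type $(ii)$ is ruled out because its locus has dimension $\leq 39$ (Proposition \ref{p9}) while the family of line bundles $L$ on $C$ producing the bundles $\mathcal E$ has dimension at least $40$. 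Some such dimension comparison is indispensable; without it your first step is an assertion, not a proof.

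For Petri injectivity, the step you yourself flag as ``the crux''---controlling the kernels of the maps $\sigma_{s_i}^{*}$ and the relative position of their images via a Cayley--Bacharach/general-position argument on the zero schemes $Z_s$---is left open, so the proposal does not actually establish the theorem. The paper avoids this bookkeeping entirely by exploiting the extension $0\to\mathcal O_X^{\oplus 2}\to\mathcal E\to\mathcal O_C(C)\otimes L^{*}\to 0$ built into the construction: tensoring with $\mathcal E^{*}\otimes\omega_X$ exhibits the Petri map as the connecting map $\theta$ in the resulting cohomology sequence, and its injectivity reduces to the single isomorphism $H^0(X,\omega_X)\cong H^0\bigl(C,\mathcal O_C(C)\otimes L^{*}\otimes(\mathcal E^{*}\otimes\omega_X)|_C\bigr)$. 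That isomorphism follows from restricting the dual sequence to $C$ and two elementary vanishings, $H^0(C,\mathcal O_C(C)\otimes L^{*})=0$ and $H^0\bigl(C,\mathcal O_C(C)\otimes L^{*\otimes 2}\otimes\omega_X|_C\bigr)=0$, the latter by a negative-degree count. You should replace your general-position step with this degree computation on $C$; as it stands the proposal has a genuine gap at exactly the two points where the theorem requires work.
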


\begin{theorem} \label{PP3}

If $c_2 \geq 117$, then the Brill-Noether locus $\mathcal W^{0}_{2} \subset \mathcal M(2;5H,c_2)$ is non-empty and every irreducible component of it has the actual dimension same as the expected dimension.\\
\end{theorem}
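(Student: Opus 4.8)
The plan is to combine the determinantal description of the Brill--Noether locus from \cite{LC1} (which supplies the lower bound on the dimension of every component automatically) with an explicit Serre--correspondence count that both produces points of $\mathcal W^0_2$ and bounds the dimension of every stratum from above. First I would record the numerics. Since $X$ is a sextic we have $H^2=6$, $K_X=2H$ and $\chi(\mathcal O_X)=11$; for $\mathcal E\in\mathcal M(2;5H,c_2)$ Riemann--Roch gives $\chi(\mathcal E)=2\chi(\mathcal O_X)+\tfrac12 c_1\cdot(c_1-K_X)-c_2=67-c_2$, while the expected dimension of the moduli space is $4c_2-c_1^2-3\chi(\mathcal O_X)=4c_2-183$. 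Because $\mathcal E$ is stable of slope $15$ and $\mathcal E^\vee\cong\mathcal E(-5H)$, Serre duality gives $h^2(\mathcal E)=h^0(\mathcal E(-3H))=0$ (the bundle $\mathcal E(-3H)$ has negative slope), so the hypotheses of \cite{LC1} hold and the expected codimension of $\mathcal W^0_2$ is $1-\chi(\mathcal E)=c_2-66$. Thus the expected dimension is $3c_2-117$, and the determinantal structure forces every component of $\mathcal W^0_2$ to have dimension at least $3c_2-117$.

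For the reverse inequality and for non-emptiness I would pass through the Serre construction. A nonzero section of $\mathcal E$ vanishing in codimension two along a length-$c_2$ scheme $Z$ fits in $0\to\mathcal O_X\to\mathcal E\to\mathcal I_Z(5H)\to 0$ with $\xi\in\operatorname{Ext}^1(\mathcal I_Z(5H),\mathcal O_X)\cong H^1(\mathcal I_Z(7H))^\vee$ (Serre duality, using $5H+K_X=7H$), and $\mathcal E$ is locally free exactly when $Z$ satisfies the Cayley--Bacharach property with respect to $|7H|$. Since $h^0(\mathcal O_X(7H))=\binom{10}{3}-\binom{4}{3}=116$ and $H^1(\mathcal O_X(mH))=0$ for all $m$, the sequence $0\to\mathcal I_Z(7H)\to\mathcal O_X(7H)\to\mathcal O_Z\to 0$ yields $\dim\operatorname{Ext}^1(\mathcal I_Z(5H),\mathcal O_X)=h^1(\mathcal I_Z(7H))=c_2-116+h^0(\mathcal I_Z(7H))$. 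For $c_2\geq 117$ a general $Z$ imposes the maximal number of conditions on $|7H|$, so $h^0(\mathcal I_Z(7H))=0$; removing one point keeps this vanishing, so Cayley--Bacharach holds and a general extension is locally free. Stability follows because any destabilizing $\mathcal O(mH)\hookrightarrow\mathcal E$ with $m\geq 3$ would push to a nonzero section of $\mathcal I_Z((5-m)H)$, impossible for general $Z$. This establishes non-emptiness, and for general data $h^0(\mathcal E)=1$.

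The dimension count on the incidence variety $\{(\mathcal E,s)\}$, fibered over $\operatorname{Hilb}^{c_2}(X)$ by $s\mapsto Z(s)$ and over $\mathcal W^0_2$ by $\mathbb P(H^0(\mathcal E))$, then reads
\[
\dim\mathcal W^0_2=\dim\operatorname{Hilb}^{c_2}(X)+\bigl(\dim\operatorname{Ext}^1-1\bigr)-\bigl(h^0(\mathcal E)-1\bigr)=2c_2+(c_2-117)-(h^0(\mathcal E)-1),
\]
which equals $3c_2-117$ precisely when the generic member of the stratum has $h^0=1$ and its zero scheme imposes the maximal number of conditions on $|7H|$. To extend this to all components I would stratify $\mathcal W^0_2$ by the integers $h^0(\mathcal E)$ and $t:=h^0(\mathcal I_Z(7H))$ and by the divisorial part $mH$ (with $m\in\{0,1,2\}$ by stability) of the vanishing section, and on each stratum weigh the gain $+t$ in $\dim\operatorname{Ext}^1$ against the codimension in $\operatorname{Hilb}^{c_2}(X)$ of the locus of $Z$ imposing at most $116-t$ conditions on $|7H|$, and against the drop $-(h^0(\mathcal E)-1)$.

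The main obstacle is exactly this last step: showing that none of the special strata --- those with $h^0(\mathcal E)\geq 2$, with $Z$ failing to impose the maximal number of conditions on $|7H|$, or with a nontrivial divisorial part --- sweeps out a locus of dimension exceeding $3c_2-117$. Here I would invoke the cohomological and dimension estimates of \cite{S} together with the Cayley--Bacharach bookkeeping in the style of \cite{Simpson11} to bound $\dim\{Z:h^0(\mathcal I_Z(7H))\geq t\}$ and the number of sections simultaneously. Granting these bounds, every stratum has dimension at most $3c_2-117$, so the upper bound matches the determinantal lower bound and every irreducible component of $\mathcal W^0_2$ has the expected dimension $3c_2-117$.
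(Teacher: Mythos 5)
Your overall route is the same as the paper's: non-emptiness via the Serre construction and the Cayley--Bacharach criterion for $|7H|$ (the numerics $h^0(\mathcal O_X(7))=116$ and $h^1(\mathcal I_Z(7H))=c_2-116+h^0(\mathcal I_Z(7H))$, and the stability argument via $h^0(\mathcal I_Z(2H))=0$, are all correct and, in the case of stability, more explicit than what the paper writes), an upper bound on $\dim\mathcal W^0_2$ via the incidence variety of pairs $(\mathcal E,s)$ fibered over $\mathrm{Hilb}^{c_2}(X)$, and a matching lower bound on every component. For the lower bound the paper quotes \cite{Simpson11}, Corollary 3.1, which gives $\dim\geq 3c_2-h^0(\mathcal O_X(5)\otimes\omega_X)-1=3c_2-117$ for each component directly; your determinantal bound yields the same number, but only once you know $\dim\mathcal M(2;5H,c_2)=4c_2-183$, i.e.\ that the moduli space is good for $c_2\geq 117$ (the paper records this, citing \cite{S}), so that dependence should be made explicit.

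The one genuine gap is the step you yourself flag: bounding the dimension of the strata $\Delta_t=\{Z:\ h^0(\mathcal I_Z(7H))\geq t\}$. Writing ``granting these bounds'' leaves unproved exactly the content needed for the ``every irreducible component'' claim, and it does not require the machinery of \cite{S}: consider the incidence correspondence $T=\{(C,Z):Z\subset C\}\subset\mathbb P(H^0(\mathcal O_X(7H)))\times\mathrm{Hilb}^{c_2}(X)$. The fiber of $T$ over a divisor $C\in|7H|$ lies in $\mathrm{Hilb}^{c_2}(C)$, of dimension $\leq c_2$, so $\dim T\leq 115+c_2$; on the other hand the fiber over $Z\in\Delta_t$ is $\mathbb P(H^0(\mathcal I_Z(7H)))$, of dimension $\geq t-1$, whence $\dim\Delta_t\leq c_2+116-t$, i.e.\ $\Delta_t$ has codimension $\geq c_2-116+t\geq t$ in $\mathrm{Hilb}^{c_2}(X)$. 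This codimension exactly absorbs the gain of $t$ in $\dim\mathrm{Ext}^1(\mathcal I_Z(5H),\mathcal O_X)=c_2-116+t$, and the fibers of the projection to $\mathcal W^0_2$ (of dimension $h^0(\mathcal E)-1$) only lower the estimate, so every stratum, hence every component, has dimension $\leq 3c_2-117$. With this inserted (and with the divisorial-part strata handled as you indicate, by twisting down by $mH$, $m\leq 2$), your argument closes and coincides with the paper's proof.
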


\subsection{ Overview of the proofs and the structure of the paper:}

In section \ref{S1}, we will recall some basic results which we will use in the subsequent sections. More precisly, we will collect facts regarding coherent systems from \cite{HeMin}, we will recall necessary useful facts regarding algebraic curves from \cite{ACGH} and other sources, necessary results regarding sheaves on surface and Cayley-Bacharach property from \cite{Huy} , results regarding the geometry of moduli of rank $2$ stable bundles on a very general sextic surface in $\mathbb P^3$ from \cite{S}. Finally, we will recall  facts regarding Ulrich bundles on surfaces from \cite{Coskunconstruction}, \cite{HartshorneUlrich}, \cite{Coskundelpezzo}, \cite{Casnati} and other sources.\\
In section \ref{S2},  for convenience we briefly outline the construction of the Brill-Noether locus $\mathcal W^{r}_{k,H}$ following \cite{LC1}, \cite{KS}. We also recall the notion of Petri map for smooth, projective, irreducible algebraic variety of arbitrary dimension having canonical bundle ample following \cite{KS} and the equivalence of existence of smooth points in $\mathcal W^{k}_{r,H}$ and the injectivity of Petri map as an analogous case of curves.\\
In section \ref{S3}, using elementary techniques we first prove that if there exists a base-point free line bundle on $C \in |5H|$ with exactly $2$ sections with degree greater than or equal to $61$,  then $\mathcal W^{1}_{2} \subset \mathcal M(2;5H,c_2)$ is non-empty provided $c_2$ is same as the degree of the line bundle. Then we show the existence of a base-point free line bundle (on a curve $C \in |5H|$) of degree $103\leq d \leq 107$ with exactly $2$ sections using classical Brill-Noether theory. The candidate for non-emptiness is precisely the dual of the kernel of the evaluation map associated to the line bundle. Using these two propositions we finally prove the first part of Theorem \ref{PP1}. We also mention in the remark a generalized version of all three results proven in this section.\\
In section \ref{S4}, using certain moduli space isomorphism and the necessary condition for the existence of a non-zero Higgs field (or twisted endomorphism) in the case of sextic surface from \cite{S}, we show that for $103 \leq c_2 \leq 107$, if $\mathcal E \in \mathcal W^{1}_{2}- \mathcal W^{2}_{2}$ is same as the candidate for the non-emptiness described in previous section, then $\mathcal E$ is a smooth point of $\mathcal M(2;5H,c_2)$. Then using the injectivity of Petri map we prove Theorem \ref{PP2}. \\
In section \ref{S5}, using the Cayley-Bacharach property and other elementary techniques we  first show the non-emptiness of $\mathcal W^{0}_{2}$ for $c_2 \geq 117$, we  then prove a useful dimension estimate lemma following \cite{S}, which coupled with a dimension estimate result from \cite{Simpson11} leads to the completion of a proof of Theorem \ref{PP3}.\\
Finally in section \ref{S6}, in the first subsection following \cite{Coskunconstruction} we give a set of sufficient conditions for the existence of a rank $2$ weakly Ulrich bundle which is not Ulrich and investigate whether this conditions really hold. We obtain a partial answer to this question and mention possible implication of obtaining some more non-empty Brill-Noether loci in addition to the first part of Theorem \ref{PP1}. In the second subsection, using the existence of a rank $2$ Ulrich bundle  on a very general sextic surface we indeed obtain some more non-empty Brill-Noether loci and complete the proof of Theorem \ref{PP1}. Finally, adapting techniques from \cite{Coskun1} we investigate the possibility of existence of higher rank simple Ulrich bundles on a very general sextic surface.

\section*{Notation and convention}
We always work over the field $\mathbb C$ of complex numbers. If $X$ is a smooth, projective, irreducible variety, we denote by $\omega_X$, the canonical sheaf (canonical bundle) on $X$. For a coherent sheaf $E$ on $X$ we denote by $H^i(X,E)$ the i'th cohomology group of $E$ (which is also a $\mathbb C$ vector space) and by $h^i(X,E) := \text{dim}_{\mathbb C}H^i(X,E)$. For   the line bundle $\mathcal{O}_{\mathbb{P}^3}(n)$ we write simply   $\mathcal{O}(n)$ and for a subscheme $Y \subset \mathbb{P}^3$, we denote the pull back of $\mathcal{O}(n)$ to $Y$  by $\mathcal{O}_Y(n)$. If $\mathcal E$ is a vector bundle on $X$ (we make no distinction between vector bundle and locally free sheaf), then by $\mathcal E^* := \mathcal Hom(\mathcal E, \mathcal O_X)$ we mean the dual of the vector bundle $\mathcal E$. For a zero dimensional subscheme $Z$ of $\mathbb P^3$, we denote by $\mathcal J_Z$ the ideal sheaf of $Z$. For a coherent sheaf $E$ on $X$, we denote by $c_i(E)$  the i'th Chern class of $E$. We will make no distinction between a divisor and its associated line bundle and vice-versa. For any real number $x$, $\lfloor x \rfloor$ will denote the greatest integer smaller than or equal to $x$.\\

\section{Technical Preliminaries}\label{S1}
In this section we will recall few preparatory results which we need in next sections.\\

Let $r$ be a positive integer. Consider the linear system $\mathcal L_r = |\mathcal O(r)|$ in $\mathbb P^3$. We know that it's a projective space of dimension ${r+3 \choose 3} -1$. Let's denote by $U_r$ the dense open subset in $\mathcal L_r$, whose points correspond to smooth hypersurfaces of degree $r$ in $\mathbb P^3$.\

By Noether-Lefschetz theorem (cf.\cite{GH}), the Picard group, $Pic(X)$ of a very general surface  $X \in U_r$ with $r\geq 4$ is generated by $\mathcal O_X(1)$ (i.e $Pic(X) \cong \mathbb Z$). The phrase \textit{very general} means that the property holds outside the union (say $\mathcal N_r$) of countably many proper zariski closed subset of $U_r$. We call $\mathcal N_r$ the \textit{Noether-Lefschetz locus} in degree $r$.\\

If $X$ is a very general sextic hypersurface in $\mathbb P^3$, then by adjunction  (see \cite{RH}, example $8.20.3$), we have $\omega_X \cong \mathcal O_X(2)$. looking at the piece of the long exact cohomology sequence :\
\begin{equation}
 H^1(\mathbb P^3, \mathcal O(m)) \to H^1(X, \mathcal O_X(m)) \to H^2((\mathbb P^3, \mathcal O(m-6))
\end{equation}
and using facts from the cohomology of the projective space (cf. \cite{RH}, Chapter-$3$, Theorem $5.1(b)$), we see that $H^1(X, \mathcal O_X(m))=0, \forall m \in \mathbb Z$. More precisly, using Serre duality and above observation we have the following :\\

\begin{itemize}

\item $H^0(X, \mathcal O_X(m)) \cong H^0(\mathbb P^3, \mathcal O(m)),  \forall m \leq 5 $\\ 

\item  $H^1(X, \mathcal O_X(m)) \cong H^1(\mathbb P^3, \mathcal O(m)) = 0,  \forall m \in \mathbb Z$\\ 

\item  $H^2(X, \mathcal O_X(m)) \cong (H^0(\mathbb P^3, \mathcal O(2-m)))^*,  \forall m \geq -3$\\

\end{itemize}

\begin{remark}
This shows that the canonical sheaf has non-trivial global section. It's also ample and globally generated.
\end{remark}

For the convenience of computation in later parts of the article we make the following remark on some numerical invariants (first and second chern classes, euler characteristic) of a vector bundle.\\

\begin{remark}

Let $X \subset \mathbb P^3$ be a surface of degree $d$  and $\mathcal E$ be a rank $r$ vector bundle on $X$, then we have the following :

\begin{equation}{\label{e1}}
c_1(\mathcal E(m)) = c_1(\mathcal E) + rm,
\end{equation}
\begin{equation}{\label{e2}}
c_2(\mathcal E(m)) = c_2(\mathcal E) + dm(r-1)c_1(\mathcal E) + dm^2.{r \choose 2}
\end{equation}
The Euler characteristic of a rank $2$ vector bundle can be computed by Riemann-Roch formula for vector bundles on an algebraic surface:\
 
$\chi(\mathcal E) = 2.\chi(\mathcal O_X) + \frac{1}{2}(c_{1}^{2}(\mathcal E) - c_1(\mathcal E).c_1(\omega_X)) -c_2(\mathcal E) $. \\

\end{remark}

Let $X$ be a smooth, projective, irreducible variety of dimension $n$ with an ample divisor $H$ on it. Let $E$ be a torsion-free sheaf (cf.\cite{Huy}) on $X$. Then from Hirzebruch-Riemann-Roch formula, we have $\text{deg}_H(E) = c_1(E).H^{n-1}$. The slope of $E$ with respect to $H$ is defined as $\mu_H(E) = \frac{\text{deg}_H(E)}{r(E)}$, where $r(E) : =$ rank of $E$ (see \cite{Huy}).\\

\begin{definition}\label{D1}(Mumford,Takemoto / slope (semi)stability)\\
$E$ is called $\mu_H$ semistable (respectively stable) if for any nontrivial, proper  subsheaf $F \subset E$ with $r(F) < r(E)$ we have, $\mu_H(F) \leq \mu_H(E)$(respectively, $\mu_H(F) < \mu_H(E))$.\
\end{definition}


\subsection{Coherent Systems}
    
Here we recall some basic definition and results on coherent system. Interested readers may look at \cite{HeMin}, \cite{LP} for a detailed account.\\

\begin{definition}\label{D2}(D\'{e}finition $1.1$, \cite{HeMin})\\
An algebraic system on an algebraic variety $X$ is a triplet $\Lambda = (\Gamma, \sigma, F)$, where $F$ is an $\mathcal O_X$ module, $\Gamma$ is a $\mathbb C$ vector space and $\sigma : \Gamma \to H^0(F)$ is a $\mathbb C$ linear map.\\
A morphism between $2$ algebraic systems $\Lambda = (\Gamma, \sigma, F)$ and $\Lambda^{'} =(\Gamma^{'}, \sigma^{'}, F^{'})$ is a pair of morphisms denoted by $\Theta =(\alpha,\beta) : \Lambda \to \Lambda^{'}$, where $\beta :F \to F^{'}$ is a morphism of $\mathcal O_X$ modules and $\alpha : \Gamma \to \Gamma^{'}$ is a $\mathbb C$ linear map such that $\sigma^{'}\alpha=H^0(\beta)\sigma$.\\
An algebraic system $\Lambda = (\Gamma, \sigma, F)$ is said to be a Coherent system if $F$ is a Coherent $\mathcal O_X$ module and the dimension of $\Gamma$ is finite.\\
\end{definition}

Given a Coherent system $\Lambda$, we denote by $Ext^i(\Lambda,-)$ the derived functors of the functor $Hom(\Lambda,-)$ and similarly by $\mathcal Ext^i(\Lambda,-)$ the derived functors of the functor $\mathcal Hom(\Lambda,-)$.\\

\begin{proposition}\label{P1}
Let $f :X \to S$ be a flat morphism and  $\Lambda =(\Gamma,\sigma,F)$ and $\Lambda^{'} =(\Gamma^{'},\sigma^{'},F^{'})$ be two algebraic systems on $X/S$. Then there is a long exact sequence:\
\begin{align*}
& 0 \to \mathcal Hom_{f}(\Lambda,\Lambda^{'}) \to \mathcal Hom(\Gamma,\Gamma^{'}) \oplus \mathcal Hom_{f}(F,F^{'}) \to \mathcal Hom_{f}(f^{*}(\Gamma), F^{'}) \to \mathcal {E}xt^{1}_{f}(\Lambda,\Lambda^{'})  \\
& \to \mathcal Ext^1(\Gamma, \Gamma^{'}) \oplus \mathcal Ext^1_{f}(F,F^{'}) \to \mathcal Ext^1_{f}(f^{*}(\Gamma), F^{'}) \to \cdots
\end{align*}
\end{proposition}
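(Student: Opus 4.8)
The plan is to realise $\Lambda$ as the cokernel in a canonical short exact sequence of coherent systems whose other two terms are built from elementary systems with computable relative $\mathcal Ext$, and then to read off the asserted sequence as the contravariant $\mathcal Ext$-sequence of that short exact sequence. Recall that coherent systems form an abelian category in which kernels and cokernels are taken on the vector-space part and the sheaf part separately. Write $\phi\colon f^*\Gamma\to F$ for the morphism adjoint to $\sigma$ under $f^*\dashv f_*$, so that $H^0(\phi)$ composed with the unit $\Gamma\to f_*f^*\Gamma$ recovers $\sigma$. For an $\mathcal O_X$-module $G$ put $\mathbf S(G):=(0,0,G)$, and for an $\mathcal O_S$-module $W$ put $\mathbf V(W):=(W,\mathrm{can},f^*W)$ with $\mathrm{can}$ the unit of adjunction. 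The first step is to verify that
\[ 0 \longrightarrow \mathbf S(f^*\Gamma) \xrightarrow{\ g\ } \mathbf V(\Gamma)\oplus \mathbf S(F) \xrightarrow{\ h\ } \Lambda \longrightarrow 0 \]
is exact, where $g$ is the graph of $-\phi$ (zero on vector-space parts, $x\mapsto(x,-\phi(x))$ on sheaf parts) and $h$ is the sum of the augmentations $(\mathrm{id}_\Gamma,\phi)\colon\mathbf V(\Gamma)\to\Lambda$ and $(0,\mathrm{id}_F)\colon\mathbf S(F)\to\Lambda$. That $g,h$ are genuine morphisms of systems is exactly the compatibility $H^0(\phi)\circ\mathrm{can}=\sigma$; exactness is then checked componentwise, the only nontrivial point being that the sheaf-level kernel of $(x,y)\mapsto\phi(x)+y$ is precisely the graph $\{(x,-\phi(x))\}\cong f^*\Gamma$.

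Next I would compute the relative Ext of the three building blocks against $\Lambda'=(\Gamma',\sigma',F')$. The formulas
\[ \mathcal Hom_f(\mathbf S(G),\Lambda')=\mathcal Hom_f(G,F'),\qquad \mathcal Hom_f(\mathbf V(W),\Lambda')=\mathcal Hom(W,\Gamma') \]
exhibit $\mathbf S$ and $\mathbf V$ as left adjoints of the (exact) component functors $\Lambda'\mapsto F'$ and $\Lambda'\mapsto\Gamma'$. Since $\mathbf S$ is exact, and $\mathbf V$ is exact precisely because $f$ is flat so that $f^*$ is exact, the corresponding right adjoints preserve injectives; hence an injective resolution of $\Lambda'$ in coherent systems is carried to injective resolutions of $\Gamma'$ and of $F'$. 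Deriving in the variable $\Lambda'$ therefore gives
\[ \mathcal Ext^i_f(\mathbf S(G),\Lambda')=\mathcal Ext^i_f(G,F'),\qquad \mathcal Ext^i_f(\mathbf V(W),\Lambda')=\mathcal Ext^i(W,\Gamma'), \]
and specialising to $G=f^*\Gamma$, $G=F$ and $W=\Gamma$ produces the cross term $\mathcal Ext^i_f(f^*\Gamma,F')$ and the diagonal term $\mathcal Ext^i(\Gamma,\Gamma')\oplus\mathcal Ext^i_f(F,F')$ appearing in the statement.

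It then remains to apply the derived functors of the contravariant relative Hom $\mathcal Hom_f(-,\Lambda')$ to the short exact sequence of the first step. Since $\mathcal Ext_f(-,\Lambda')$ is a contravariant $\delta$-functor, this produces a long exact sequence, and the substitutions of the previous step turn it into exactly the asserted one. Writing $\phi'$ for the adjoint of $\sigma'$, one checks that the initial injection $\mathcal Hom_f(\Lambda,\Lambda')\hookrightarrow\mathcal Hom(\Gamma,\Gamma')\oplus\mathcal Hom_f(F,F')$ is $(\alpha,\beta)\mapsto(\alpha,\beta)$ and that the next arrow, induced by $g$, is the defect map $(\alpha,\beta)\mapsto\phi'\circ f^*\alpha-\beta\circ\phi$, which on global sections is $\sigma'\alpha-H^0(\beta)\sigma$; its kernel is precisely the set of morphisms of coherent systems, giving exactness of the first three terms.

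The step I expect to be the main obstacle is this computation carried out over $S$: one must be sure that the elementary systems $\mathbf S(G)$ and $\mathbf V(W)$ remain adapted when $\mathcal Ext_f$ is taken in the \emph{relative} sense, i.e.\ that the component functors still preserve injectives after one accounts for the relative pushforward. Flatness of $f$ is essential here, since it is what makes $\mathbf V$ exact, and the cleanest way to organise the bookkeeping of the two derivations implicit in $\mathcal Ext_f$ — the derived system-level $\mathcal Hom$ and $R f_*$ — is a Grothendieck spectral sequence comparison. Granting this compatibility, the remainder is the formal long exact sequence attached to the short exact sequence of coherent systems above.
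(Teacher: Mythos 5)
Your argument is correct: the canonical short exact sequence $0 \to \mathbf S(f^*\Gamma) \to \mathbf V(\Gamma)\oplus\mathbf S(F) \to \Lambda \to 0$, together with the adjunctions identifying $\mathcal Ext^i_f(\mathbf S(G),\Lambda')$ and $\mathcal Ext^i_f(\mathbf V(W),\Lambda')$ with the component Ext's (using flatness of $f$ to make $\mathbf V$ exact), is precisely the standard resolution underlying this statement. The paper itself offers no proof, simply citing Proposition 1.5 of He's \emph{Espaces de modules de syst\`emes coh\'erents}, and your proposal reconstructs essentially that argument.
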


\begin{proof}
See Proposition $1.5$, \cite{HeMin}.
\end{proof}
   
We  state  a useful corollary of the above Proposition which will be later used in the construction of \textit{Petri map}.\\

\begin{corollary}\label{C1}(\cite{HeMin}, Corollaire $1.6)$\\
If $\Lambda =(\Gamma,\sigma,F)$ and $\Lambda^{'} =(\Gamma^{'},\sigma^{'},F^{'})$ are $2$ algebraic systems with $\sigma^{'}$ injective, then there exists an exact sequence :\
\begin{align*}
& 0 \to Hom(\Lambda,\Lambda^{'}) \to Hom(F,F^{'}) \to Hom(\Gamma, H^0(F^{'})/\Gamma^{'})\\
& \to Ext^{1}(\Lambda,\Lambda^{'}) \to Ext^1(F,F^{'}) \to Hom(\Gamma,H^1(F^{'}))\\
& \to Ext^2(\Lambda,\Lambda^{'}) \to Ext^2(F,F^{'}) \to Hom(\Gamma, H^2(F^{'})) \to \cdots
\end{align*}
\end{corollary}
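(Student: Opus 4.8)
The plan is to deduce the statement from Proposition \ref{P1} by specializing the base to a point and then simplifying the resulting long exact sequence using the hypothesis that $\sigma'$ is injective. First I would take $S = \mathrm{Spec}\,\mathbb C$ and let $f : X \to S$ be the structure morphism, which is automatically flat. Under this specialization the relative functors $\mathcal{Hom}_f$ and $\mathcal{Ext}^i_f$ become the ordinary global groups $Hom$ and $Ext^i$ on $X$. The pullback $f^*(\Gamma) = \Gamma \otimes_{\mathbb C} \mathcal O_X$ is a free sheaf of finite rank, so adjunction gives the identifications $Ext^i(f^*(\Gamma), F') \cong Hom_{\mathbb C}(\Gamma, H^i(X, F'))$ for every $i$. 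Moreover, since $\mathbb C$ is a field the finite-dimensional spaces $\Gamma,\Gamma'$ are projective, whence $Ext^i_{\mathbb C}(\Gamma,\Gamma') = 0$ for $i \geq 1$ and the corresponding direct summands drop out. With these substitutions the sequence of Proposition \ref{P1} collapses to
\begin{align*}
0 &\to Hom(\Lambda,\Lambda') \to Hom(\Gamma,\Gamma') \oplus Hom(F,F') \xrightarrow{\ v\ } Hom(\Gamma, H^0(F')) \\
&\to Ext^1(\Lambda,\Lambda') \to Ext^1(F,F') \to Hom(\Gamma, H^1(F')) \to Ext^2(\Lambda,\Lambda') \to \cdots,
\end{align*}
where $v(\alpha,\beta) = \sigma'\alpha - H^0(\beta)\sigma$.

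Next I would use the injectivity of $\sigma'$ to cancel the spurious $Hom(\Gamma,\Gamma')$ summand. The short exact sequence of vector spaces $0 \to \Gamma' \xrightarrow{\sigma'} H^0(F') \xrightarrow{q} H^0(F')/\Gamma' \to 0$, after applying the exact functor $Hom_{\mathbb C}(\Gamma,-)$, yields
\[
0 \to Hom(\Gamma,\Gamma') \xrightarrow{\ \sigma'_*\ } Hom(\Gamma, H^0(F')) \xrightarrow{\ q_*\ } Hom(\Gamma, H^0(F')/\Gamma') \to 0 .
\]
The restriction of $v$ to the first summand is precisely $\sigma'_*$, which is injective; this forces the projection $Hom(\Lambda,\Lambda') \to Hom(F,F')$, $(\alpha,\beta)\mapsto\beta$, to be injective, since $\beta = 0$ together with $\sigma'\alpha = H^0(\beta)\sigma = 0$ forces $\alpha = 0$. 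I would then show that the composite $w := q_* \circ (\beta \mapsto H^0(\beta)\sigma) : Hom(F,F') \to Hom(\Gamma, H^0(F')/\Gamma')$ has kernel exactly the image of $Hom(\Lambda,\Lambda')$: a homomorphism $\beta$ lies in $\ker w$ iff $H^0(\beta)\sigma(\Gamma) \subseteq \sigma'(\Gamma')$, and injectivity of $\sigma'$ then produces a unique $\alpha$ with $\sigma'\alpha = H^0(\beta)\sigma$, that is, a morphism $(\alpha,\beta)$ of coherent systems. This gives exactness of the opening three terms $0 \to Hom(\Lambda,\Lambda') \to Hom(F,F') \xrightarrow{w} Hom(\Gamma, H^0(F')/\Gamma')$.

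Finally I would splice in the connecting map. Since $\mathrm{im}(\sigma'_*) \subseteq \mathrm{im}(v) = \ker\delta$, where $\delta : Hom(\Gamma, H^0(F')) \to Ext^1(\Lambda,\Lambda')$ is the connecting homomorphism of the collapsed sequence, $\delta$ descends to a map $\bar\delta : Hom(\Gamma, H^0(F')/\Gamma') \to Ext^1(\Lambda,\Lambda')$. A short diagram chase, using that $q_*$ is surjective and that $\ker q_* = \mathrm{im}\,\sigma'_* \subseteq \ker\delta$, gives $\ker\bar\delta = q_*(\ker\delta) = \mathrm{im}\,w$ and $\mathrm{im}\,\bar\delta = \mathrm{im}\,\delta$, which are exactly the exactness statements at $Hom(\Gamma, H^0(F')/\Gamma')$ and at $Ext^1(\Lambda,\Lambda')$. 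The remainder of the sequence, from $Ext^1(F,F')$ onwards, is untouched by this manipulation and stays exact, producing the asserted sequence. I expect the main obstacle to be the bookkeeping of this last step, namely checking that the relative connecting maps specialize correctly and that $\delta$ genuinely factors through the quotient, rather than any of the vector-space exactness, which is formal.
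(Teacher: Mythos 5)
Your proposal is correct and follows exactly the route the paper intends: the paper itself gives no proof of this corollary, merely citing \cite{HeMin}, Corollaire $1.6$, but the statement is presented as a consequence of Proposition \ref{P1}, and your derivation --- specializing the base to a point, identifying $\mathcal Ext^i_f(f^*\Gamma, F')$ with $Hom(\Gamma, H^i(F'))$, killing the $Ext^{\geq 1}(\Gamma,\Gamma')$ terms, and cancelling the $Hom(\Gamma,\Gamma')$ summand against its image under $\sigma'_*$ inside $Hom(\Gamma,H^0(F'))$ --- is precisely how the cited source obtains it. The diagram chase you flag as the main obstacle is routine and your treatment of it (factoring the connecting map through the quotient and checking exactness at the spliced terms) is sound.
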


Next we recall $\alpha$-semistability (stability) of a Coherent system.\

Consider a polynomial $\alpha >0$ with rational coefficients. For a coherent system $\Lambda =(\Gamma,\sigma,F)$ of dimension $d$ and multiplicity $r$, the reduced Hilbert polynomial $\rho^{\alpha}_{\Lambda}$ relative to $\alpha$ of the coherent system $\Lambda$ is defined by, $\rho^{\alpha}_{\Lambda} =\frac{\text{dim}\Gamma}{r}.\alpha+\frac{P_F}{r}$, where $P_F$ is the Hilbert polynomial of $F$. Once $\alpha$ is fixed we just write $\rho_{\Lambda}$. Note that, the dimension of $\Lambda$ is $\text{dim}(\Gamma)$ and the multiplicity of $\Lambda$ is the multiplicity of the coherent sheaf $F$.\

\begin{definition}{\label{D3}}($\alpha$-semistability(stability))\\
A Coherent system $\Lambda =(\Gamma,\sigma,F)$ of dimension $d$ is $\alpha$-semistable (stable) if,\\
$(i)$ the Coherent system $\Lambda =(\Gamma,\sigma,F)$ is pure (i.e $F$ is pure),\\
$(ii)$ for proper non-zero sub-Coherent systems $\Lambda^{'} =(\Gamma^{'},\sigma^{'},F^{'}) \subset \Lambda$, one has\
 $\rho_{\Lambda^{'}} \leq \rho_{\Lambda}$ (respectively, $<$).\\
\end{definition}

Finally, we are in a position to mention the \textit{smoothness criterion} which will be used in section $3$ (and hence indirectly in section $5$). For the construction of the variety $\text{Syst}_{X,\alpha}(k,P)$ (see \cite{HeMin}, section $2.2$ and $2.3$).\\

\begin{theorem}\label{T2}(Smoothness Criterion)\\
Let $\Lambda_0$ be an $\alpha$-stable coherent system and has the corresponding point in $\text{Syst}_{X,\alpha}(k,P)$. Then the tangent space at $a$ is isomorphic to $Ext^{1}_{\mathbb C}(\Lambda_0,\Lambda_0)$ and if $Ext^{2}_{\mathbb C}(\Lambda_0,\Lambda_0) =0$, then the variety $\text{Syst}_{X,\alpha}(k,P)$ is smooth at $a$.
\end{theorem}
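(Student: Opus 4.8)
The plan is to establish this by the standard deformation--obstruction formalism for moduli of $\alpha$-stable coherent systems, exactly parallel to the Kuranishi description of moduli of stable sheaves. The first claim, that the Zariski tangent space at $a$ is $\mathrm{Ext}^1_{\mathbb C}(\Lambda_0,\Lambda_0)$, is a first-order deformation statement; the second, that the vanishing of $\mathrm{Ext}^2_{\mathbb C}(\Lambda_0,\Lambda_0)$ forces smoothness, is an obstruction statement. I would accordingly split the argument into: (i) identifying first-order deformations with $\mathrm{Ext}^1$; (ii) locating the obstruction to lifting a deformation across a small extension of Artinian local rings inside $\mathrm{Ext}^2$; and (iii) passing from formal smoothness of the deformation functor to smoothness of the scheme $\text{Syst}_{X,\alpha}(k,P)$ at the stable point $a$.

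For step (i), I would work in the exact category of coherent systems, where $\mathrm{Ext}^i(\Lambda_0,\Lambda_0)$ are the derived functors introduced before Proposition \ref{P1}. A first-order deformation of $\Lambda_0$ is a coherent system $\Lambda_\epsilon = (\Gamma_\epsilon, \sigma_\epsilon, F_\epsilon)$ over $\mathbb C[\epsilon]/(\epsilon^2)$, flat over the dual numbers and restricting to $\Lambda_0$ over $\mathbb C$. Writing such a deformation as an extension of $\Lambda_0$ by itself, its isomorphism class is classified by an element of $\mathrm{Ext}^1(\Lambda_0,\Lambda_0)$; this is the coherent-system analogue of the familiar statement for sheaves, and the $\alpha$-stability of $\Lambda_0$ guarantees, via $\mathrm{Hom}(\Lambda_0,\Lambda_0)=\mathbb C$, that infinitesimal automorphisms are only the scalars and hence act trivially on the moduli point. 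Consequently the tangent space of the quotient realizing $\text{Syst}_{X,\alpha}(k,P)$ near $a$ is exactly $\mathrm{Ext}^1(\Lambda_0,\Lambda_0)$, with no correction coming from the group action.

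For step (ii), given a small extension $0 \to \mathbb C \to A' \to A \to 0$ of Artinian local $\mathbb C$-algebras and a deformation $\Lambda_A$ of $\Lambda_0$ over $A$, I would construct the obstruction class $\mathrm{ob}(\Lambda_A) \in \mathrm{Ext}^2(\Lambda_0,\Lambda_0)$ by the usual cocycle/gluing procedure: choose local lifts of $\Lambda_A$ over $A'$, measure the failure of these to patch, and verify that the resulting datum is a well-defined class whose vanishing is necessary and sufficient for the existence of a global lift $\Lambda_{A'}$. The functorial long exact sequence of Corollary \ref{C1} is the tool that keeps track of the relevant $\mathrm{Ext}$ groups throughout and identifies the obstruction group as $\mathrm{Ext}^2(\Lambda_0,\Lambda_0)$ rather than some cohomology of the underlying sheaf alone. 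Hence, if $\mathrm{Ext}^2(\Lambda_0,\Lambda_0)=0$, every deformation lifts across every small extension, so the deformation functor of $\Lambda_0$ is formally smooth.

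Finally, for step (iii), I would invoke the construction of $\text{Syst}_{X,\alpha}(k,P)$ as a (geometric) quotient of a parameter scheme by a reductive group, together with the fact that at an $\alpha$-stable point the stabilizer is reduced to scalars, so that the orbit map is an immersion and the completed local ring of the moduli space pro-represents a hull of the deformation functor. Formal smoothness of the functor then translates into regularity of $\mathcal O_{\text{Syst},a}$, i.e.\ smoothness at $a$, the dimension being $\dim_{\mathbb C}\mathrm{Ext}^1(\Lambda_0,\Lambda_0)$. I expect the main obstacle to be precisely this last bookkeeping: checking carefully, in the coherent-system setting rather than the purely sheaf-theoretic one, that the obstruction genuinely lands in $\mathrm{Ext}^2(\Lambda_0,\Lambda_0)$ and that stability yields a sufficiently free group action to identify the scheme-theoretic local structure with the deformation functor. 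Since this is a foundational result of \cite{HeMin}, in the paper itself it would suffice to cite the corresponding theorem there.
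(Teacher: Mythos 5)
Your closing remark is exactly what the paper does: its entire ``proof'' of Theorem \ref{T2} is the citation to \cite{HeMin}, Th\'{e}or\`eme 3.12, with no argument given. Your three-step sketch (first-order deformations classified by $\mathrm{Ext}^1(\Lambda_0,\Lambda_0)$, obstructions to lifting over small Artinian extensions landing in $\mathrm{Ext}^2(\Lambda_0,\Lambda_0)$, and descent from the parameter scheme to $\text{Syst}_{X,\alpha}(k,P)$ using that $\alpha$-stability reduces stabilizers to scalars) is the standard deformation-theoretic argument and is the one He carries out; it is correct in outline. The one caveat is that the genuinely technical content --- constructing the obstruction class in the category of coherent systems, for which He develops the relative $\mathcal{E}xt$ machinery quoted in Proposition \ref{P1} and Corollary \ref{C1}, and identifying the completed local ring of the moduli space with a hull of the deformation functor --- is acknowledged but not executed, so as written your proposal is a citation plus a correct roadmap rather than a self-contained proof; for the purposes of this paper that is precisely the level of detail the authors themselves adopt.
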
 

\begin{proof}
See  \cite{HeMin}, Th\'{e}or\`eme $3.12$.
\end{proof}

\subsection{Sheaves on surfaces}
   
Here we list some basic useful facts about sheaves on surface which will be used in subsequent sections. Throughout this subsection $X$ denotes a smooth, complex, projective surface unless mentioned otherwise.\\

\begin{definition}\label{D3}
Let $C$ be an effective divisor on  $X$. If $\mathcal F$ and $\mathcal G$ are vector bundles on $X$ and $C$, respectively, then a vector bundle $\mathcal E$ on $X$ is obtained by an elementary transformation of $\mathcal F$ along $\mathcal G$ if there exists an exact sequence of the following form:\
\begin{align*}
0 \to \mathcal E \to \mathcal F \to i_{*}\mathcal G \to 0
\end{align*}
where, $i$ denotes the embedding $C \subset X$.\\
\end{definition}

The following Proposition is useful in calculating the first and second Chern classes of the dual of the kernel of the evaluation maps, appearing in section \ref{S3} and \ref{S6}.\\

\begin{proposition}\label{P2}
If $\mathcal F$ and $\mathcal G$ are locally free on $X$ and $C$, respectively, then the kernel $\mathcal E$ of any surjection $\phi :\mathcal F \to i_{*}\mathcal G$ is locally free. Moreover, if $\rho$ denotes the rank of $\mathcal G$, one has $\text{det}(\mathcal E) \cong \text{det}(\mathcal F) \otimes \mathcal O_X(-\rho.C)$ and $c_2(\mathcal E) = c_2(\mathcal F) - \rho C.c_1(\mathcal F)+\frac{1}{2}\rho C(\rho C + \omega_X) +\chi(\mathcal G)$.
\end{proposition}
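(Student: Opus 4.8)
The plan is to treat the three assertions separately, always working with the defining sequence $0\to\mathcal E\to\mathcal F\to i_*\mathcal G\to 0$ from Definition \ref{D3}, and to reduce everything to the computation of the Chern classes of the torsion sheaf $i_*\mathcal G$.

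First I would establish local freeness of $\mathcal E$ by a local projective-dimension argument. Since $C$ is an effective divisor on the smooth surface $X$ it is Cartier, so near any $x\in C$ it is cut out by a single nonzerodivisor $f\in\mathcal O_{X,x}$; as $\mathcal G$ is locally free of rank $\rho$ on $C$, the sheaf $i_*\mathcal G$ is locally isomorphic to $(\mathcal O_X/\mathcal O_X(-C))^{\oplus\rho}$ and hence admits, near $x$, the length-one locally free resolution $0\to\mathcal O_X(-C)^{\oplus\rho}\to\mathcal O_X^{\oplus\rho}\to i_*\mathcal G\to 0$ (the first map being multiplication by $f$). Thus $\text{pd}_{\mathcal O_{X,x}}(i_*\mathcal G)_x\le 1$ at every point, and feeding this into the defining sequence with $\mathcal F$ locally free gives $\text{pd}_{\mathcal O_{X,x}}\mathcal E_x\le\max\{\text{pd}\,\mathcal F_x,\ \text{pd}\,(i_*\mathcal G)_x-1\}=0$, so $\mathcal E$ is locally free; its rank equals that of $\mathcal F$ because $i_*\mathcal G$ is torsion. (One can alternatively exhibit a local basis directly via Nakayama's lemma after lifting a basis of the target.)

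Next I would compute the determinant. The determinant line bundle is multiplicative on short exact sequences of sheaves with finite locally free resolutions (all our sheaves qualify, $X$ being smooth), so $\det\mathcal E\cong\det\mathcal F\otimes\det(i_*\mathcal G)^{-1}$, and it suffices to show $\det(i_*\mathcal G)\cong\mathcal O_X(\rho C)$. For the base case, the sequence $0\to\mathcal O_X(-C)\to\mathcal O_X\to i_*\mathcal O_C\to 0$ gives $\det(i_*\mathcal O_C)\cong\mathcal O_X(C)$, hence $\det(i_*\mathcal O_C^{\oplus\rho})\cong\mathcal O_X(\rho C)$. For general $\mathcal G$, write $[\mathcal G]=\rho[\mathcal O_C]+\tau$ in $K_0(C)$ with $\tau$ of rank $0$; since $\det_X\circ i_*\colon K_0(C)\to\text{Pic}(X)=\text{CH}^1(X)$ is the homomorphism $\beta\mapsto c_1(i_*\beta)$ and $i_*$ raises codimension by one, the rank-zero class $\tau$ pushes to a class supported in codimension $\ge 2$, whose $c_1$ in $\text{CH}^1(X)$ vanishes. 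Therefore $\det(i_*\mathcal G)\cong\det(i_*\mathcal O_C^{\oplus\rho})\cong\mathcal O_X(\rho C)$, which also records $c_1(i_*\mathcal G)=\rho C$ and yields $c_1(\mathcal E)=c_1(\mathcal F)-\rho C$.

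Finally I would obtain the $c_2$ formula from the Whitney relation $c(\mathcal F)=c(\mathcal E)\,c(i_*\mathcal G)$, whose degree-four part reads $c_2(\mathcal E)=c_2(\mathcal F)-c_1(\mathcal E)\,c_1(i_*\mathcal G)-c_2(i_*\mathcal G)$. The only remaining input is $c_2(i_*\mathcal G)$, and I would get it without full Grothendieck–Riemann–Roch: since $i$ is a closed immersion $\chi(X,i_*\mathcal G)=\chi(C,\mathcal G)$, while Hirzebruch–Riemann–Roch on $X$ gives $\chi(i_*\mathcal G)=\int_X\text{ch}(i_*\mathcal G)\,\text{td}(X)$ with $\text{td}(X)=1-\tfrac12\omega_X+\chi(\mathcal O_X)$. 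As $i_*\mathcal G$ has rank $0$ and $c_1=\rho C$, extracting the degree-four part gives $\chi(\mathcal G)=\text{ch}_2(i_*\mathcal G)-\tfrac{\rho}{2}\,C\cdot\omega_X$, whence $c_2(i_*\mathcal G)=\tfrac12(\rho C)^2-\text{ch}_2(i_*\mathcal G)=\tfrac12\rho^2C^2-\tfrac{\rho}{2}C\cdot\omega_X-\chi(\mathcal G)$. Substituting $c_1(\mathcal E)=c_1(\mathcal F)-\rho C$ and this value into the Whitney relation and simplifying yields exactly $c_2(\mathcal E)=c_2(\mathcal F)-\rho C\cdot c_1(\mathcal F)+\tfrac12\rho C(\rho C+\omega_X)+\chi(\mathcal G)$. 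I expect the main obstacle to be the two numerically delicate points: upgrading $c_1(i_*\mathcal G)=\rho[C]$ to an honest isomorphism of line bundles on an \emph{arbitrary} smooth projective surface (where $\text{Pic}\to H^2$ may have a kernel), which the Chow-theoretic identification $\det=c_1$ on $\text{CH}^1=\text{Pic}$ resolves, and the correct bookkeeping of the $\omega_X$-term in $\text{ch}_2(i_*\mathcal G)$, for which the identity $\chi_X(i_*\mathcal G)=\chi_C(\mathcal G)$ combined with Riemann–Roch on $X$ conveniently sidesteps the normal-bundle Todd class that a direct application of Grothendieck–Riemann–Roch would otherwise force one to carry.
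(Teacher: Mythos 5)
Your argument is correct and is in substance the standard proof: the paper itself gives no proof of this proposition but simply cites Huybrechts--Lehn, Proposition 5.2.2, whose argument is the same as yours --- local freeness of $\mathcal E$ from the fact that $i_*\mathcal G$ has homological dimension one as an $\mathcal O_X$-module, and the determinant and $c_2$ formulas from multiplicativity of $\det$ and the Whitney formula combined with Riemann--Roch applied to $i_*\mathcal G$. Your device of replacing Grothendieck--Riemann--Roch for the embedding $i$ by the identity $\chi_X(i_*\mathcal G)=\chi_C(\mathcal G)$ together with Hirzebruch--Riemann--Roch on $X$ is a legitimate and logically equivalent way to extract $\mathrm{ch}_2(i_*\mathcal G)$, and the final substitution does reproduce the stated formula (and is consistent with the paper's subsequent rewriting of $\chi(\mathcal G)$ via Riemann--Roch on $C$).
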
 

\begin{proof}
See \cite{Huy},Proposition $5.2.2$.
\end{proof}

Also for a smooth (or at least reduced) curve $C$, the characteristic $\chi(\mathcal G)$ can be written as $\chi(\mathcal G) = \text{deg}(\mathcal G) - \frac{\rho}{2}C.(\omega_X + C)$. Hence $c_2(\mathcal E) = c_2(\mathcal F) + (\text{deg}(\mathcal G) -\rho C .c_1(\mathcal F)) + \frac{\rho(\rho-1)}{2}C^{2}$.\\

For a detailed account of zero-dimensional subschemes see \cite{LG0}. We define the \textit{length of a zero dimensional scheme} $Z$ as $l(Z) = h^0(\mathcal O_Z)$.\\

\begin{definition}(Cayley-Bacharach property)
A zero dimensional subscheme $Z \subset  X$ is said to satisfy Cayley-Bacharach (CB in short) for a line bundle $L$, if for any subscheme $Z^{'} \subset Z$  with $l(Z^{'}) = l(Z) -1$ and $s\in H^0(L)$ with $s|_{Z^{'}}=0$, then $s|_{Z} = 0$.\\
\end{definition}

It's not difficult to see that a zero-dimensional subscheme  $Z \subset X$ which satisfies Cayley-Bacharach property for $\mathcal{O}_X(m)$  also satisfies Cayley-Bacharach property for $\mathcal{O}(m)$   and vice-versa. Thus, if  a zero-dimensional subscheme  $Z \subset X$ satisfies Cayley-Bacharach property for $\mathcal{O}_X(m)$,  then with out loss of generality we can assume  that  $Z$  satisfies Cayley-Bacharach property for $\mathcal{O}(m)$ in $\mathbb{P}^3$ and  we write $Z$ satisfies $\text{CB}(m)$.\\

The following theorem gives us an important implication of when a $0$-dimensional locally complete intersection subscheme (l.c.i in short)[see \cite{RH}, Page-$185$] satisfies Cayley-Bacharach property for some line bundle in terms of local freeness of some torsion free sheaves which we will use in section \ref{S5}.\\

\begin{theorem}\label{CB}
Let $Z \subset X$ be a local complete intersection of codimension $2$ and let $L$ and $M$ be line bundles on $X$. Then there exists an extension:\
\begin{align*}
0 \to L \to E \to M \otimes \mathcal J_Z \to 0
\end{align*}
such that $E$ is locally free if and only if $Z$ satisfies Cayley-Bacharach property for the line bundle $L^*\otimes M \otimes \omega_X$.
\end{theorem}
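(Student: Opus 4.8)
The plan is to realize the desired extensions as elements of $\mathrm{Ext}^1(M \otimes \mathcal{J}_Z, L)$ and to detect local freeness by the classical Serre construction, converting the (open) condition on the extension class into the Cayley--Bacharach condition through Serre duality. First I would record the relevant local $\mathcal{E}xt$ sheaves. Applying $\mathcal{H}om(-,L)$ to the sequence $0 \to M \otimes \mathcal{J}_Z \to M \to \mathcal{O}_Z \otimes M \to 0$ and using that $Z$ is a codimension-$2$ local complete intersection (so that $\mathcal{E}xt^q(\mathcal{O}_Z,-)$ vanishes for $q<2$ while $\mathcal{E}xt^2(\mathcal{O}_Z, \omega_X) \cong \omega_Z$ by local duality), one computes
\begin{align*}
\mathcal{H}om(M \otimes \mathcal{J}_Z, L) &\cong L \otimes M^{-1}, \\
\mathcal{E}xt^1(M \otimes \mathcal{J}_Z, L) &\cong \omega_Z \otimes (L^* \otimes M \otimes \omega_X)^{-1}\big|_Z .
\end{align*}
Writing $N := L^* \otimes M \otimes \omega_X$, the latter is $\omega_Z \otimes N^{-1}\big|_Z$, a line bundle on the $0$-dimensional Gorenstein scheme $Z$.

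Next I would feed these sheaves into the local-to-global spectral sequence to obtain the exact sequence
\begin{align*}
H^1(X, L \otimes M^{-1}) \to \mathrm{Ext}^1(M \otimes \mathcal{J}_Z, L) \xrightarrow{\ \pi\ } H^0\big(Z, \omega_Z \otimes N^{-1}\big|_Z\big) \xrightarrow{\ \delta\ } H^2(X, L \otimes M^{-1}).
\end{align*}
The key local input of the Serre construction is that an extension class $e$ yields a locally free $E$ if and only if $\pi(e)$ generates the stalk of $\mathcal{E}xt^1(M \otimes \mathcal{J}_Z, L)$ at every point of $Z$; since this sheaf is a line bundle on $Z$, this says precisely that $\pi(e)$ is a nowhere-vanishing section on $Z$. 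Thus a locally free extension exists if and only if $\mathrm{im}(\pi) = \ker \delta$ contains a nowhere-vanishing section.

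It then remains to match $\ker \delta$ against the Cayley--Bacharach condition by dualizing. Serre duality on $X$ gives $H^2(X, L\otimes M^{-1})^* \cong H^0(X, N)$, and Grothendieck--Serre duality on the finite Gorenstein scheme $Z$ gives $H^0(Z, \omega_Z \otimes N^{-1}|_Z)^* \cong H^0(Z, N|_Z)$; under these identifications $\delta$ becomes dual to the restriction map $H^0(X, N) \to H^0(Z, N|_Z)$. Hence, if $W \subset H^0(Z, N|_Z)$ denotes the image of restriction, then $\mathrm{im}(\pi) = \ker\delta = W^{\perp}$ under the perfect Serre pairing on $Z$. The final step is the dictionary: $W^{\perp}$ contains a nowhere-vanishing section if and only if no colength-$1$ subscheme $Z' \subset Z$ (with $l(Z') = l(Z)-1$) has the property that every global section of $N$ vanishing on $Z'$ already vanishes on $Z$; that is, if and only if $Z$ satisfies Cayley--Bacharach for $N = L^* \otimes M \otimes \omega_X$. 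Indeed, a section $t \in W^{\perp}$ vanishes at a point of $Z$ exactly when it is orthogonal to the one-dimensional subspace of $H^0(Z,N|_Z)$ cut out by a complementary colength-$1$ subscheme, so the absence of a nowhere-vanishing section is precisely the existence of such a bad $Z'$.

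The main obstacle is the conjunction of the local freeness criterion and the duality dictionary of the last paragraph: one must justify at each (possibly non-reduced) point of $Z$ that generating the stalk of $\mathcal{E}xt^1$ is equivalent to freeness of the middle term of the extension there, and one must make the identification of $\delta$ with the restriction map canonical so that the annihilator $W^{\perp}$ corresponds correctly to the nowhere-vanishing locus. Both are standard consequences of the Serre construction on surfaces, and the complete argument is carried out in \cite{Huy}.
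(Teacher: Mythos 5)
Your proposal is correct and follows precisely the standard Serre-construction argument: the paper offers no proof of its own here, citing Huybrechts--Lehn, Theorem 5.1.1, and your sketch (local-to-global sequence for $\mathrm{Ext}^1(M\otimes\mathcal J_Z,L)$, the local freeness criterion via generation of the stalks of $\mathcal Ext^1$, and the dualization of $\delta$ to the restriction map $H^0(X,N)\to H^0(Z,N|_Z)$) reproduces exactly the proof of the cited result. The only points left implicit --- that a Gorenstein zero-dimensional scheme has a unique colength-one subscheme at each support point (the socle), and that the existence of a simultaneously generating section follows because a vector space over an infinite field is not a finite union of proper subspaces --- are minor and handled in the reference.
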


\begin{proof}
See \cite{Huy}, Theorem $5.1.1$.
\end{proof}

let $\mathcal E$ be a rank $2$ stable bundle on $X$, we define $\text{End}^0(\mathcal E)$ as the kernel of the trace map from $\text{End}(\mathcal E)$ to $\mathcal O_X$. In notation, $\text{End}^{0}(\mathcal E) = \text{ker}(\text{tr} : \text{End}(\mathcal E) \to \mathcal O_X)$. In what follows we list a necessary condition for a rank $2$ stable bundle to be potentially obstructed followed by a dimension estimate result which will be used in proving smoothness of a point inside moduli space in section \ref{S4}.\\

\begin{proposition}\label{p8}
Let $X \subset \mathbb P^3$ be a very general sextic surface. Let $\mathcal E$ be a  rank $2$ stable bundle on $X$ (w.r.to $\mathcal O_X(1)$) with $\text{det}(\mathcal E) \cong \mathcal O_X(1)$ and $\mathcal E$ is potentially obstructed, i.e,  $H^0(X, \text{End}^0(\mathcal E) \otimes \omega_X) \neq 0$. Then either : \

$(i)$ $H^0(X, \mathcal E) \neq 0$ \

 OR\
 
$(ii)$ there is a section $\beta \in H^0(X, \omega^2_X)$ which is not square defining a double cover $r : Z \to X$ with $Z \subset \omega_X$ and $r$ is ramified along zero$(\beta)$, together with a line bundle $L$ over a desingularization $\epsilon :\tilde Z \to Z$ such that $\mathcal E = r_*\epsilon_*(L)^{**}$.

\end{proposition}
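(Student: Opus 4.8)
The plan is to analyze the nonzero twisted endomorphism through its characteristic (spectral) data. Fix a nonzero $\phi \in H^0(X,\text{End}^0(\mathcal E)\otimes\omega_X)$, i.e. a map $\phi:\mathcal E\to\mathcal E\otimes\omega_X$ with $\text{tr}(\phi)=0$. Since $\mathcal E$ has rank $2$ and $\phi$ is trace-free, its characteristic polynomial is $t^2+\det(\phi)$, so setting $\beta:=-\det(\phi)\in H^0(X,\omega_X^2)$ and recalling $\omega_X\cong\mathcal O_X(2)$ (hence $\omega_X^2\cong\mathcal O_X(4)$), Cayley--Hamilton gives $\phi^2=\beta\cdot\mathrm{id}$. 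The associated spectral double cover $r:Z\to X$ is the divisor $Z=\{t^2=\beta\}$ in the total space of $\omega_X$, where $t$ is the tautological section of the pullback of $\omega_X$; it is finite of degree $2$ and ramified exactly along the zero scheme of $\beta$. Because $X$ is very general, $\mathrm{Pic}(X)=\mathbb Z\cdot H$, so $\beta$ is a square in the function field $\mathbb C(X)$ if and only if $\beta=\gamma^2$ for some $\gamma\in H^0(X,\omega_X)$. This is precisely the dichotomy ``$\beta$ square / not square'' in the statement, and I would treat the two cases separately.

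First I would dispose of the case where $\beta$ is a square (including $\beta=0$), showing it forces $(i)$. If $\beta=\gamma^2$ with $\gamma\ne 0$, the factorization $(\phi-\gamma)(\phi+\gamma)=\phi^2-\gamma^2=0$ produces the eigen-line-bundles $\mathcal M_{\pm}\subset\mathcal E$, defined as the saturations of $\ker(\phi\mp\gamma)$; these are nonzero rank-$1$ subsheaves, since $\phi\mp\gamma$ cannot be injective (its image lies in $\ker(\phi\pm\gamma)\otimes\omega_X$, which would then have rank $2$, forcing $\phi\pm\gamma=0$ and contradicting $\text{tr}(\phi)=0$). Writing $\mathcal M_+=\mathcal O_X(a)$ and $\mathcal M_-=\mathcal O_X(c)$, stability of $\mathcal E$ (whose slope is $H^2/2=3$) gives $6a<3$ and $6c<3$, hence $a\le 0$ and $c\le 0$. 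On the other hand $\phi-\gamma$ kills $\mathcal M_+$ and has image in $\mathcal M_-\otimes\omega_X$, so it descends to a nonzero map $\mathcal E/\mathcal M_+\to\mathcal M_-\otimes\omega_X$ of rank-$1$ torsion-free sheaves; comparing $H$-degrees ($6-6a\le 6c+12$) yields $a+c\ge -1$. Since $a,c\le 0$ are integers with $a+c\ge -1$, at least one of them equals $0$, so $\mathcal O_X\hookrightarrow\mathcal E$ and $H^0(X,\mathcal E)\ne 0$. The nilpotent case $\gamma=0$ is the same argument with the single sub-line-bundle $\mathcal M=\overline{\ker\phi}=\mathcal O_X(a)$, where the descended map $\mathcal E/\mathcal M\to\mathcal M\otimes\omega_X$ forces $a\ge -\tfrac12$, i.e. $a=0$, again giving a section. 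This settles alternative $(i)$.

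Next I would treat the case where $\beta$ is not a square, aiming at $(ii)$. Here $t^2-\beta$ is irreducible over $\mathbb C(X)$, so the spectral surface $Z$ is integral and $r:Z\to X$ is an integral finite double cover ramified along $\{\beta=0\}$. The essential input is the spectral correspondence for $\omega_X$-twisted Higgs sheaves in the spirit of Beauville--Narasimhan--Ramanan, in the form used by \cite{S} and \cite{Simpson11}: the pair $(\mathcal E,\phi)$ is equivalent to a rank-$1$ torsion-free sheaf on $Z$ with $\mathcal E\cong r_*(-)$ and $\phi$ induced by multiplication by $t$. Passing to a desingularization $\epsilon:\tilde Z\to Z$, this rank-$1$ sheaf is the pushforward of a line bundle $L$ on $\tilde Z$, and since $\mathcal E$ is locally free, hence reflexive, one recovers $\mathcal E$ as the reflexive hull $r_*\epsilon_*(L)^{**}$, which is exactly alternative $(ii)$.

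The main obstacle is this nonsquare case: setting up the spectral correspondence over a possibly singular double cover $Z$ of a surface and justifying the precise normalized/reflexive form $\mathcal E=r_*\epsilon_*(L)^{**}$ (in particular that a line bundle on the desingularization, rather than merely a torsion-free sheaf on $Z$, suffices, and that taking the double dual on $X$ returns the given locally free $\mathcal E$). This is where I would lean on the results of \cite{S} and \cite{Simpson11} rather than reprove the correspondence. By contrast, the square case is elementary degree bookkeeping, and since the two alternatives need not be mutually exclusive, it suffices that each case yields one of $(i)$, $(ii)$.
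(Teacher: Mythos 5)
Your proposal is essentially correct, but note that the paper does not prove this proposition at all: its ``proof'' is the single line ``See \cite{S}, Proposition 2.2'', so what you have written is a reconstruction of the argument in the cited source (which in turn adapts Mestrano--Simpson's quintic-surface argument to the sextic). Your treatment of the case where $\beta=-\det(\phi)$ is a square is complete and correct: the eigen-subsheaves $\mathcal M_\pm=\mathcal O_X(a),\mathcal O_X(c)$ satisfy $a,c\le 0$ by stability (slope $3$ against $6a$), and the nonzero descended map $\mathcal E/\mathcal M_+\to\mathcal M_-\otimes\omega_X$ gives $6(1-a)\le 6(c+2)$, i.e.\ $a+c\ge -1$, forcing one of $a,c$ to vanish and hence $H^0(\mathcal E)\ne 0$; the nilpotent case $a\ge -\tfrac12$ is handled the same way. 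In the non-square case your outline is the right one, though one sentence is imprecise: a rank-one torsion-free sheaf $\mathcal F$ on the integral spectral surface $Z$ need not literally be the pushforward of a line bundle from the desingularization; the correct step is to set $L=(\epsilon^*\mathcal F)^{**}$ (a line bundle on the smooth surface $\tilde Z$), observe that $r_*\epsilon_*L$ agrees with $r_*\mathcal F=\mathcal E$ outside a finite set of points, and conclude $\mathcal E=(r_*\epsilon_*L)^{**}$ by reflexivity of $\mathcal E$ --- which is exactly why the double dual appears in the statement. Since you explicitly defer the spectral correspondence itself to \cite{S} and \cite{Simpson11}, and the paper defers the entire proposition to the same place, your proposal is at least as self-contained as the paper's own treatment; what it buys is an explicit, elementary verification of alternative $(i)$ and a transparent explanation of where the dichotomy between $(i)$ and $(ii)$ comes from (squareness of the determinant of the Higgs field).
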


\begin{proof}
See \cite{S}, Proposition $2.2$
\end{proof}

\begin{proposition}\label{p9}
The dimension of the locus of the potentially obstructed bundles of type $(ii)$ is $ \leq 39$.
\end{proposition}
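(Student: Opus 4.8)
The plan is to bound the dimension of the type $(ii)$ locus by parametrizing such bundles through their spectral data and counting parameters, exploiting that the assignment $(\beta, L) \mapsto \mathcal E = r_*\epsilon_*(L)^{**}$ from Proposition \ref{p8}$(ii)$ is surjective onto this locus. Every type $(ii)$ bundle is recovered from the section $\beta \in H^0(X, \omega_X^2)$, which determines the double cover $r : Z \to X$ in the total space of $\omega_X$, together with the line bundle $L$ on the desingularization $\tilde Z$. Hence it suffices to bound $\dim\{\beta\} + \max_{\beta}\dim\{\text{admissible } L\}$ and verify this is $\le 39$.

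First I would bound the family of covers. Since rescaling $\beta$ yields an isomorphic cover, the covers are parametrized by $[\beta]\in\mathbb P H^0(X,\omega_X^2)$. Using $\omega_X\cong\mathcal O_X(2)$ and the cohomology identifications recorded above, $H^0(X,\omega_X^2)\cong H^0(X,\mathcal O_X(4))\cong H^0(\mathbb P^3,\mathcal O(4))$, so $h^0(X,\omega_X^2)=\binom{7}{3}=35$ and the space of covers has dimension at most $34$; removing the cone of squares (of affine dimension $h^0(\omega_X)=10$) does not lower this.

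Next I would bound the line bundles $L$ over a fixed cover. The constraint $\det\mathcal E\cong\mathcal O_X(1)$ translates, via $\det r_*(-)=\text{Nm}(-)\otimes\det(r_*\mathcal O_Z)$ together with $r_*\mathcal O_Z=\mathcal O_X\oplus\omega_X^{-1}$, into fixing the norm $\text{Nm}(L)$; fixing $c_2$ in addition pins down the numerical class of $L$, so the admissible $L$ form a torsor over $\text{Prym}(\tilde Z/X)=\ker(\text{Nm}:\text{Pic}^0(\tilde Z)\to\text{Pic}^0(X))$, of dimension $q(\tilde Z)-q(X)=q(\tilde Z)$. For the generic, smooth spectral surface I would compute $q(Z)=h^1(\mathcal O_Z)=h^1(\mathcal O_X)+h^1(\omega_X^{-1})=h^1(\mathcal O_X)+h^1(\mathcal O_X(-2))=0$, using $q(X)=0$ and $H^1(X,\mathcal O_X(m))=0$ for all $m$. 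Thus over the open locus of smooth covers the fibre is a point and the corresponding sublocus has dimension $34$.

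The main obstacle is the contribution of the singular spectral surfaces: when $\text{zero}(\beta)$ acquires singularities, $Z$ is singular and its desingularization $\tilde Z$ may have $q(\tilde Z)>0$, so the line-bundle fibres become positive dimensional. I would control this by stratifying $\mathbb P H^0(\omega_X^2)$ according to the singularity type of the branch curve and, on each stratum, trading the codimension of the stratum against the growth of $q(\tilde Z)$, bounding the latter through the standard irregularity formulas for double covers in terms of the branch singularities. The assertion is that the sum $\dim(\text{stratum})+q(\tilde Z)$ never exceeds the uniform value $39$; checking that this trade-off closes in every stratum is the delicate point. Since the resulting bound is a constant independent of $c_2$, it shows that the type $(ii)$ locus acquires large codimension in $\mathcal M(2;\mathcal O_X(1),c_2)$ once $c_2$ is large, which is exactly how the estimate will be used in Section \ref{S4}.
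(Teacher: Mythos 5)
Your overall strategy --- parametrize the type $(ii)$ locus by the spectral data $(\beta, L)$, bound the base by $\dim \mathbb{P}H^0(X,\omega_X^2)=34$, and show the fibres over the smooth covers are $0$-dimensional because $q(\tilde Z)=h^1(\mathcal O_X)+h^1(\omega_X^{-1})=0$ --- is the right one, and it is essentially the argument behind the result the paper invokes (the paper itself gives no proof, only the citation to \cite{S}, Lemma $2.4$, which carries out exactly this kind of count following Mestrano--Simpson). Your numerical inputs are also correct: $h^0(X,\omega_X^2)=h^0(\mathbb P^3,\mathcal O(4))=35$, so the smooth-cover stratum contributes at most $34$.

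However, there is a genuine gap, and you have identified it yourself without closing it: the entire content of the bound $39$ (as opposed to $34$) lies in the singular strata, and your proposal reduces to the assertion that $\dim(\text{stratum})+q(\tilde Z)\le 39$ on every stratum without proving it or even indicating where the number $39$ comes from. As written, you have established only that the locus of bundles arising from \emph{smooth} spectral covers has dimension $\le 34$; for singular $\beta$ the irregularity of the desingularization can jump, and one must actually carry out the trade-off between the codimension of the singularity stratum in $\mathbb P H^0(\omega_X^2)$ and the growth of $q(\tilde Z)$ (this is precisely what \cite{S} does, and it is where the constant $39=34+5$ is produced). Two smaller imprecisions that would also need attention if you completed the argument: the admissible $L$ with fixed $\det\mathcal E$ and $c_2$ form a (finite or countable, but bounded) union of torsors over $\mathrm{Pic}^0(\tilde Z)$ indexed by classes in $\mathrm{NS}(\tilde Z)$, not a single Prym torsor; and the identity $\det r_*(-)=\mathrm{Nm}(-)\otimes\det(r_*\mathcal O_Z)$ must be adapted to account for the desingularization $\epsilon$ and the double dual in $\mathcal E=r_*\epsilon_*(L)^{**}$, which can modify Chern classes along the singular locus of $Z$.
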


\begin{proof}
See \cite{S}, Lemma $2.4$ and the discussion before that.
\end{proof}

\subsection{Line bundles on curves}

Here we recall some basic results from the classical Brill-Noether theory of line bundles on curves. For an elaborate account see \cite{ACGH}.\

Let $C$ be a smooth, projective curve of genus $g$. For $r \geq 0,d \geq 1$ fixed integers, we denote, $\text{Supp}(W^{r}_{d}(C))=\{L \in Pic^d(C) | h^0(C,L) \geq r+1\}$. The following theorem provides a lowerbound for the dimension of $W^{r}_{d}(C)$.

\begin{theorem}\label{T2.15}
Let $\rho = \rho(g,r,d)$ be the Brill Noether number $\rho : = g -(r+1)(g+r-d)$. If $\rho \geq 0$, then $W^r_d(C)$ is non-empty. Furthermore, if $g-d+r \geq 0$, then each irreducible component of $W^r_d(C)$ has dimension atleast $\rho$.
\end{theorem}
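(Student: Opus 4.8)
The plan is to realize $W^r_d(C)$ as a \emph{degeneracy locus} of a morphism of vector bundles over the Jacobian $\mathrm{Pic}^d(C)$ and then invoke the general theory of determinantal varieties, following the Kleiman--Laksov approach (cf.\ \cite{ACGH}, Ch.~VII). First I would fix an effective divisor $D$ on $C$ of large degree $m$ and a Poincar\'e line bundle $\mathcal{L}$ on $C \times \mathrm{Pic}^d(C)$. Writing $\pi : C \times \mathrm{Pic}^d(C) \to \mathrm{Pic}^d(C)$ for the second projection and $\Gamma = D \times \mathrm{Pic}^d(C)$, I would push forward the exact sequence
\begin{equation*}
0 \to \mathcal{L} \to \mathcal{L}(\Gamma) \to \mathcal{L}(\Gamma)|_{\Gamma} \to 0 .
\end{equation*}
For $m \gg 0$ one has $R^1\pi_*\mathcal{L}(\Gamma)=0$, so that $E := \pi_*\mathcal{L}(\Gamma)$ is a vector bundle of rank $e = d+m+1-g$, while $F := \pi_*(\mathcal{L}(\Gamma)|_{\Gamma})$ is a vector bundle of rank $f = m$; the induced map $\gamma : E \to F$ restricts on the fibre over $[L]$ to the evaluation $H^0(L(D)) \to H^0(L(D)/L)$, whose kernel is $H^0(C,L)$. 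Consequently $h^0(L)\ge r+1$ exactly when $\mathrm{rank}\,\gamma_{[L]} \le e-(r+1)$, so with $k := e-(r+1)$ the support of $W^r_d(C)$ is precisely the degeneracy locus $\{\,\mathrm{rank}\,\gamma \le k\,\}$. A direct computation gives $(e-k)(f-k) = (r+1)(g-d+r)$, so the \emph{expected} codimension in $\mathrm{Pic}^d(C)$, which has dimension $g$, is $(r+1)(g-d+r)$, matching the expected dimension $\rho = g-(r+1)(g-d+r)$.

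For the dimension estimate I would appeal to the classical bound on determinantal ideals (Macaulay's theorem): for any morphism of vector bundles, every irreducible component of the locus where the rank drops to $\le k$ has codimension at most $(e-k)(f-k)$. Applied to $\gamma$, this shows that each irreducible component of $W^r_d(C)$ has codimension at most $(r+1)(g-d+r)$, hence dimension at least $\rho$, provided the locus is non-empty. The hypothesis $g-d+r \ge 0$ guarantees that this expected codimension is a genuine non-negative integer (when $g-d+r<0$ Riemann--Roch forces $W^r_d(C)=\mathrm{Pic}^d(C)$ and the statement is vacuous); so the dimension bound is essentially formal once non-emptiness is in hand.

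The non-emptiness is the real content, and it is where I expect the main obstacle to lie. The strategy is to compute the cohomology class of the degeneracy locus by the Thom--Porteous (Giambelli) formula as a polynomial in the Chern classes of $F-E$. Using that these Chern classes reduce to powers of the theta class $\theta$ on the Jacobian, one obtains
\begin{equation*}
[W^r_d(C)] = \left( \prod_{i=0}^{r} \frac{i!}{(g-d+r+i)!} \right) \theta^{(r+1)(g-d+r)} .
\end{equation*}
Since $\rho \ge 0$ forces the exponent $(r+1)(g-d+r) = g-\rho \le g$, and since $\theta$ is ample on $\mathrm{Pic}^d(C)\cong \mathrm{Jac}(C)$, the class $\theta^{g-\rho}$ is non-zero; a non-zero cohomology class cannot be supported on the empty set, so $W^r_d(C) \neq \emptyset$. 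The delicate point is to justify that the Thom--Porteous class really computes the class of the (possibly excess-dimensional) locus, and more robustly one should invoke the Kleiman--Laksov existence theorem for degeneracy loci, which guarantees non-emptiness whenever the expected dimension is non-negative and the bundle $F \otimes E^{\vee}$ is suitably positive. Verifying this positivity on the Jacobian, via the ampleness of $\theta$, is the crux of the argument; granting it, the two halves of the statement follow as above.
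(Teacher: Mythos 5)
Your proposal is correct and is essentially the proof the paper relies on: the paper simply cites \cite{ACGH} (Ch.~V, Theorem~1.1), whose argument is exactly the Kleiman--Laksov degeneracy-locus construction over $\mathrm{Pic}^d(C)$, the determinantal codimension bound for the dimension statement, and the Thom--Porteous class $\prod_{i=0}^{r}\frac{i!}{(g-d+r+i)!}\,\theta^{g-\rho}$ together with the ampleness of $\theta$ for non-emptiness. Your numerology ($e-k=r+1$, $f-k=g-d+r$) and your caveats about where the hypotheses $\rho\ge 0$ and $g-d+r\ge 0$ enter are all accurate.
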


\begin{proof}
See \cite{ACGH}, Chapter-$V$, Theorem $1.1$.\\
\end{proof}

We always have $W^{r+1}_d(C) \subseteq W^r_d(C)$ for all $r \geq 0$ and for all $d \geq 1$. The next lemma gives  a sufficient condition for the inclusion to be strict.\\

\begin{lemma}\label{L2.16}
Suppose $g+r-d \geq 0$. Then no component of $W^r_d(C)$ is entirely contained in $W^{r+1}_d(C)$.
\end{lemma}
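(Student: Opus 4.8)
The plan is to argue by contradiction, using the determinantal description of the Brill--Noether loci inside $\operatorname{Pic}^d(C)$ together with the resulting description of their tangent cones through cup-product maps. Suppose, to the contrary, that some irreducible component $Z$ of $W^r_d(C)$ is entirely contained in $W^{r+1}_d(C)$. Let $s:=\min_{L\in Z}h^0(C,L)$, so that $s\ge r+2$, and pick a general point $L_0\in Z$; then $h^0(C,L_0)=s$ and $L_0$ lies on no other component of $W^r_d(C)$, so that $Z$ is the reduced support of $W^r_d(C)$ in a neighbourhood of $L_0$. Write $K$ for the canonical bundle of $C$ and use Serre duality $H^1(C,\mathcal O_C)\cong H^0(C,K)^{*}$. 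For an integer $m\ge 0$ put
\[
V_m:=\{\xi\in H^1(C,\mathcal O_C):\operatorname{rank}\big(\cup_\xi\colon H^0(C,L_0)\to H^1(C,L_0)\big)\le m\},
\]
where $\cup_\xi$ denotes cup product with $\xi$. The standard local (determinantal) structure of the Brill--Noether loci (cf.\ \cite{ACGH}, Ch.~II and Ch.~IV) identifies the tangent cone at $L_0$ to $W^{j}_d(C)$ with $V_{\,s-1-j}$ for every $j\le s-1$; in particular $\dim_{L_0}W^{j}_d(C)=\dim V_{\,s-1-j}$.

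The first step is a dimension squeeze. Since $s-1\ge r+1$, we have the chain of inclusions $W^{s-1}_d(C)\subseteq W^{r+1}_d(C)\subseteq W^r_d(C)$, and by assumption $Z\subseteq W^{s-1}_d(C)$. As $Z$ is the reduced support of $W^r_d(C)$ near $L_0$ and is squeezed between $W^{s-1}_d(C)$ and $W^r_d(C)$, all of these loci have reduced support $Z$ in a neighbourhood of $L_0$, so their local dimensions at $L_0$ all equal $\dim Z$. Taking $j=r$ and $j=s-1$ in the tangent-cone formula then yields
\[
\dim V_{\,s-1-r}=\dim Z=\dim V_0 .
\]

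The heart of the matter --- and the step I expect to be the main obstacle --- is to contradict this last equality by exhibiting a cup-product map of rank exactly $1$. To this end I would evaluate at a general point: for $p\in C$ general, Serre duality realises evaluation at $p$ as a class $\eta_p\in H^0(C,K)^{*}=H^1(C,\mathcal O_C)$, and the compatibility $\langle\xi\cup s,\,t\rangle=\langle\xi,\,s\!\cdot\! t\rangle$ of cup product with multiplication of sections gives $\cup_{\eta_p}(s)=s(p)\,v_p$, where $v_p\in H^1(C,L_0)=H^0(C,K\otimes L_0^{-1})^{*}$ is evaluation at $p$. This is exactly where the hypothesis $g+r-d\ge 0$ enters: it guarantees
\[
h^0(C,K\otimes L_0^{-1})=h^1(C,L_0)=s+g-d-1\ge (g+r-d)+1\ge 1,
\]
so $|K\otimes L_0^{-1}|$ is nonempty and $v_p\neq 0$ for general $p$; together with $h^0(C,L_0)=s\ge 2$ (whence $p$ is a base point of neither $|L_0|$ nor $|K\otimes L_0^{-1}|$) this forces the image of $\cup_{\eta_p}$ to be the line $\mathbb C\,v_p$, i.e.\ $\operatorname{rank}(\cup_{\eta_p})=1$. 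Since $\xi\mapsto\cup_\xi$ is linear with kernel $V_0$ and $\eta_p$ maps to a nonzero element of rank $\le 1$, one obtains $\dim V_1\ge\dim V_0+1$. As $1\le s-1-r$ (because $s\ge r+2$), this gives $\dim V_{\,s-1-r}\ge\dim V_1>\dim V_0$, contradicting the displayed equality. Hence no component of $W^r_d(C)$ is contained in $W^{r+1}_d(C)$, as claimed.
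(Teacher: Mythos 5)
The key step that does not hold up is your identification of the tangent cone of $W^j_d(C)$ at $L_0$ with $V_{s-1-j}$, and in particular the asserted equality $\dim_{L_0}W^j_d(C)=\dim V_{s-1-j}$. For a degeneracy locus of a map of bundles, the unconditional statement is only the inclusion $TC_{L_0}\bigl(W^j_d(C)\bigr)\subseteq V_{s-1-j}$, i.e.\ $\dim_{L_0}W^j_d(C)\le\dim V_{s-1-j}$; the reverse inclusion requires the locus to have the expected codimension at $L_0$ (this is the hypothesis under which Kempf/ACGH compute tangent cones — e.g.\ the Riemann--Kempf theorem applies to $\Theta=W^0_{g-1}$ because it is always a divisor). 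A toy example such as the $1\times 1$ matrix $\gamma(t)=(t^2)$ on $\mathbb A^1$ shows the inequality can be strict. Now trace which directions your squeeze actually uses: $\dim Z\le\dim V_0$ is fine (it follows from $T_{L_0}W^{s-1}_d=(\operatorname{Im}\mu_0)^{\perp}=V_0$, since $h^0(L_0)=s$ exactly), and your rank-one class $\eta_p$ correctly gives $\dim V_1\ge\dim V_0+1$; but to reach a contradiction you need $\dim V_{s-1-r}\le\dim Z$, which is precisely the unproved direction of the tangent-cone equality for $j=r$. Since expected dimension of $W^r_d(C)$ at $L_0$ is not available — indeed the lemma is most needed exactly when the loci are excessive — the argument does not close.

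For comparison: the paper does not prove this lemma at all but cites \cite{ACGH}, Chapter IV, Lemma 3.5, whose proof is elementary and avoids tangent cones entirely. If $Z$ were a component of $W^r_d(C)$ contained in $W^{r+1}_d(C)$, consider $(p,q,L)\mapsto L(p-q)$ on $C\times C\times Z$; since $h^0(L(p-q))\ge h^0(L)-1\ge r+1$, the (irreducible) image lies in $W^r_d(C)$ and contains $Z$, hence its closure equals the component $Z$. But for $L$ general in $Z$ with $h^0(L)=s$ minimal, the hypothesis $g-d+r\ge 0$ gives $h^1(L)\ge 1$ as well as $h^0(L)\ge 1$, so for general $p,q$ one computes $h^0(L(p-q))=s-1<s$, contradicting $L(p-q)\in Z$ and semicontinuity. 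I would replace your tangent-cone step with this argument (or simply cite ACGH as the paper does); the Serre-duality/cup-product computation in your second half is correct but is not needed for this route.
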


\begin{proof}
See \cite{ACGH}, Chapter-$IV$, Lemma $3.5$.
\end{proof}





\begin{definition}
A curve $C$ is called hyperelliptic if its genus $g \geq 2$ and there exists a finite morphism $f : C \to \mathbb P^1$ of degree $2$.\\
\end{definition}

The following theorem gives an upperbound on $W^r_d(C)$, for a non-hyperelliptic curve $C$ with certain restrictions on $r,g,d$. All the following results will be useful in proving existence of base-point free line bundles in section \ref{S3}.\\

\begin{theorem}\label{p17}(Martens theorem)
Let $C$ be a smooth curve of genus $g \geq 3$. Let $d$ be an integer such that $2 \leq d \leq g-1$ and $r$ be an integer such that $0 <2r \leq d$. Then if $C$ is not hyperelliptic,  every component of $W^r_d(C)$ has dimension at most $d-2r-1$. In symbols $\text{dim}(W^r_d(C)) \leq d-2r-1$.\
If $C$ is hyperelliptic, then $\text{dim}(W^r_d)(C) = d-2r$.
\end{theorem}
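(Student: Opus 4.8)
The plan is to bound the dimension of an arbitrary irreducible component $X$ of $W^r_d(C)$ at a general point, and then separately to produce a matching lower bound in the hyperelliptic case. First I would choose a general $L \in X$. Since $2 \le d \le g-1$ forces $g - d + r \ge 1 > 0$, Lemma \ref{L2.16} applies and guarantees that $X$ is not contained in $W^{r+1}_d(C)$; hence the general $L$ satisfies $h^0(C,L) = r+1$ exactly, and in particular $h^0(C, K_C \otimes L^{-1}) = h^1(C,L) = g - d + r$ by Riemann--Roch. After replacing $L$ by $L$ minus its base divisor (which only lowers the degree and is handled by a downward induction on $d$, the extremal case $d = 2r$ being settled directly by Clifford's theorem), I may assume that $L$ is base-point-free; note that $h^0(C,L) = r+1 \ge 2$ since $2r \ge 2$, so $|L|$ contains a base-point-free pencil.

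The key estimate comes from the determinantal description of $W^r_d(C)$: at a point $L$ with $h^0 = r+1$ the Zariski tangent space to $W^r_d(C)$ is the annihilator of the image of the cup-product (Petri) map
\[
\mu_0 : H^0(C,L) \otimes H^0(C, K_C \otimes L^{-1}) \longrightarrow H^0(C, K_C),
\]
so that $\dim_L W^r_d(C) \le \dim T_L W^r_d(C) = g - \operatorname{rank}(\mu_0)$. I would then bound $\operatorname{rank}(\mu_0)$ from below by the base-point-free pencil trick: restricting $\mu_0$ to $V \otimes H^0(C, K_C \otimes L^{-1})$ for a base-point-free pencil $V \subset H^0(C,L)$, together with the general-position inequality for multiplication of global sections, gives $\operatorname{rank}(\mu_0) \ge h^0(C,L) + h^0(C, K_C \otimes L^{-1}) - 1 = (r+1) + (g-d+r) - 1 = g - d + 2r$. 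Substituting yields $\dim X \le g - (g - d + 2r) = d - 2r$, which is the hyperelliptic bound. For the reverse inequality when $C$ is hyperelliptic with $g^1_2 = A$, the line bundles $A^{\otimes r}(p_1 + \cdots + p_{d-2r})$ with the $p_i$ varying freely on $C$ give a $(d-2r)$-dimensional family inside $W^r_d(C)$, so $\dim W^r_d(C) = d - 2r$ exactly.

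The hard part is the strict inequality $\dim X \le d - 2r - 1$ in the non-hyperelliptic case, which amounts to showing that the general-position bound on $\operatorname{rank}(\mu_0)$ is never sharp there: one must gain at least one extra unit of rank. I would extract this from the equality analysis of the multiplication map, where equality $\operatorname{rank}(\mu_0) = h^0(C,L) + h^0(C, K_C \otimes L^{-1}) - 1$ forces the two linear series to be in special position, a configuration which (via geometric Riemann--Roch and the Castelnuovo general-position argument on the canonical curve) can occur only when $C$ carries a $g^1_2$, i.e. only when $C$ is hyperelliptic. The main obstacle is precisely this rigidity statement: pinning down exactly when the base-point-free pencil trick and the general-position inequality degenerate, and tying that degeneration to the existence of a degree-two map $C \to \mathbb P^1$. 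Care is also needed to check that the base-divisor reduction in the first step never improves the bound in a way that disrupts the strict inequality, which once more reduces to the same hyperelliptic dichotomy.
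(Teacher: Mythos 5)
First, a point of comparison: the paper does not prove this statement at all --- it is quoted from \cite{ACGH} (Chapter IV, Theorem 5.1) with a bare citation --- so the only meaningful comparison is with the classical argument there, which is a global one (subtracting points from the linear series and reducing to Clifford's theorem), not the infinitesimal one you propose. Your treatment of the non-strict bound and of the hyperelliptic case is correct: since $d\le g-1$ gives $g-d+r>0$, Lemma \ref{L2.16} lets you take $L$ general in a component with $h^0(L)=r+1$ exactly, the tangent space is then $(\operatorname{Im}\mu_0)^{\perp}$, and the bilinear-map (Hopf) lemma gives $\operatorname{rank}\mu_0\ge h^0(L)+h^0(K_C-L)-1=g-d+2r$, hence $\dim X\le d-2r$; the family $rA+p_1+\cdots+p_{d-2r}$ supplies the matching lower bound when $C$ is hyperelliptic.

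The genuine gap is in the strict inequality, i.e. in the entire content of the theorem. The rigidity claim you rest it on --- that $\operatorname{rank}\mu_0=h^0(L)+h^0(K_C-L)-1$ can occur only on a hyperelliptic curve --- is false as stated. Take a non-hyperelliptic curve of genus $4$ whose canonical model lies on a quadric cone, and let $L$ be its unique $g^1_3$, a base-point-free theta-characteristic: here $h^0(L)=h^0(K_C-L)=2$ and the base-point-free pencil trick gives $\operatorname{rank}\mu_0=2h^0(K_C-L)-h^0(K_C-2L)=4-h^0(\mathcal O_C)=3$, which is exactly the Hopf bound, yet $C$ is not hyperelliptic. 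Indeed $W^1_3(C)$ is then a single non-reduced point with tangent space of dimension $d-2r=1$, so the infinitesimal bound is intrinsically incapable of detecting the strict inequality at such points. The argument can be repaired, but only by exploiting the hypothesis $\dim X\ge d-2r\ (\ge 1)$, which your sketch never uses: Hopf equality at the general $L\in X$ forces, via the base-point-free pencil trick, $h^0(K_C-2L)\ge h^0(K_C-L)-h^0(L)+1$, i.e. $h^0(2L)\ge d+1$ by Riemann--Roch; since $d\le g-1$ excludes $2L$ non-special, $2L$ attains Clifford equality, so either $C$ is hyperelliptic or $2L\cong K_C$, and the latter cannot hold for the general member of a positive-dimensional family. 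An appeal to ``Castelnuovo general position'' does not substitute for this step; as written, the part you flag as the main obstacle is precisely the theorem, and the lemma you propose to derive it from is not true.
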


\begin{proof}
See \cite{ACGH}, Chapter-$IV$, Theorem $5.1$.\\
\end{proof}

We shall say a smooth curve $C$ is \textit{bi-elliptic} if it can be represented as a ramified double covering of an elliptic curve. The following theorem is a refinement of Marten's theorem.\\

\begin{theorem}\label{p18}(Mumford's theorem)
Let $C$ be a smooth non-hyperelliptic curve of genus $g \geq 4$. Suppose that there exists integers $r$ and $d$ such that $2 \leq d \leq g-2$, $d \geq 2r >0$ and a component $X$ of $W^r_d(C)$ such that $\text{dim}(X) = d-2r-1$, then $C$ is either a trigonal, bi-elliptic or a smooth plane quintic.
\end{theorem}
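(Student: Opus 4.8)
The plan is to pass to the infinitesimal geometry of $W^r_d(C)$ at a general point of the extremal component, read off an upper bound for the rank of the Brill--Noether multiplication map from the extremality of the dimension, and then manufacture, via the base-point-free pencil trick, a special line bundle whose near-extremal Clifford behaviour forces the three listed geometries. First I would normalize the situation. Since $g+r-d\ge g-d\ge 2>0$, Lemma~\ref{L2.16} shows that the general $L$ of the extremal component $X$ has $h^0(C,L)=r+1$, i.e. $L\notin W^{r+1}_d(C)$. Passing to the moving part of $|L|$ I may assume $L$ is base-point-free and complete; Martens' Theorem~\ref{p17} shows the resulting component is again of maximal dimension, and I keep the notation $d,r$ for its degree and index. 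At such a point classical Brill--Noether theory (\cite{ACGH}, Ch.~IV) identifies $T_L W^r_d(C)$ with the annihilator of the image of the Petri (cup-product) map $\mu_0:H^0(L)\otimes H^0(\omega_C\otimes L^{-1})\to H^0(\omega_C)$, whence $\dim T_L W^r_d(C)=g-\mathrm{rank}(\mu_0)$.

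Next I would extract the key double inequality. From $d-2r-1=\dim X\le \dim T_L W^r_d(C)=g-\mathrm{rank}(\mu_0)$ I get $\mathrm{rank}(\mu_0)\le g-d+2r+1$. For a lower bound I choose a base-point-free pencil $V\subset H^0(L)$ and tensor its Koszul resolution $0\to L^{-1}\to\mathcal O_C^{\oplus 2}\to L\to 0$ by $\omega_C\otimes L^{-1}$, obtaining $0\to \omega_C\otimes L^{-2}\to(\omega_C\otimes L^{-1})^{\oplus 2}\to \omega_C\to 0$; the base-point-free pencil trick identifies the kernel of the restricted multiplication $V\otimes H^0(\omega_C\otimes L^{-1})\to H^0(\omega_C)$ with $H^0(\omega_C\otimes L^{-2})$. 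As $h^0(\omega_C\otimes L^{-1})=h^1(L)=g-d+r$ by Riemann--Roch, this gives $\mathrm{rank}(\mu_0)\ge 2(g-d+r)-h^0(\omega_C\otimes L^{-2})$. Comparing the two bounds yields $h^0(C,M)\ge g-d-1$ for $M:=\omega_C\otimes L^{-2}$, a line bundle of degree $2g-2-2d$ with $0<\deg M<2g-2$.

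Then I would run the Clifford analysis on $M$. The bundle $M$ is effective ($g-d-1\ge 1$) and special (indeed $h^1(M)=h^0(L^2)\ge 2r+1\ge 3$), so Clifford's inequality gives $h^0(M)\le \tfrac12\deg M+1=g-d$; thus $h^0(M)\in\{g-d-1,\,g-d\}$. The value $g-d$ is the equality case of Clifford for a line bundle of degree strictly between $0$ and $2g-2$, which occurs only for hyperelliptic $C$ and is excluded by hypothesis. Hence $h^0(M)=g-d-1$, equivalently (by Riemann--Roch and Serre duality) $h^0(2D)=d$, and both $M$ and $2D$ have Clifford index exactly $2$.

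The hard part will be the last step: turning this numerical near-extremality into the precise trichotomy. By itself the inequality only places $C$ among curves of small Clifford index, a class strictly larger than the three types trigonal, bielliptic, and plane quintic, so I must exploit the geometry of the morphism $\phi_{|D|}:C\to\mathbb P^r$ and the rigidity coming from the whole family $X$. I would split according to whether $\phi_{|D|}$ is birational onto its image or a cover of degree $\ge 2$: in the non-birational case the covering degree and the genus of the image, together with $h^0(2D)=d$, force either a hyperelliptic $C$ (excluded), a double cover of an elliptic curve (bielliptic), or a $g^1_3$ (trigonal); in the birational case $h^0(2D)=d$ bounds the quadrics through $\phi_{|D|}(C)$ and, via the uniform position theorem, forces the image to be a plane quintic. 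Making this case analysis watertight---in particular eliminating the spurious Clifford-index-$2$ curves and controlling the image of $\phi_{|D|}$---is where Mumford's geometric input is essential, and I would carry it out following the residuation and secant analysis of \cite{ACGH}, Ch.~IV.
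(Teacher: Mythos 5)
The paper does not actually prove this statement: it is quoted from the literature and its ``proof'' is the citation to \cite{ACGH}, Chapter IV, Theorem 5.2. So your argument has to stand on its own, and it does not --- there is a genuine gap at the final step. The infinitesimal part is correct and is a legitimately different route to the numerical core of the theorem (the \cite{ACGH} proof is a Martens-style induction plus a direct geometric study of pencils, with no Petri map): the reduction to a complete base-point-free $g^r_d$ with exactly $r+1$ sections, the bound $\mathrm{rank}(\mu_0)\le g-d+2r+1$ from $\dim X\le\dim T_LW^r_d(C)=g-\mathrm{rank}(\mu_0)$, the base-point-free pencil trick giving $\mathrm{rank}(\mu_0)\ge 2(g-d+r)-h^0(\omega_C\otimes L^{-2})$, and the Clifford argument pinning down $h^0(\omega_C\otimes L^{-2})=g-d-1$, equivalently $h^0(L^{2})=d$, are all sound. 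But what you have at that point is only that $C$ carries a line bundle of Clifford index $2$ (and when $d=g-2$ not even that, since then $h^1(L^{2})=1$ and $L^{2}$ does not compute the Clifford index of $C$). That class properly contains the three types in the conclusion: every tetragonal curve and every smooth plane sextic has $\mathrm{Cliff}(C)\le 2$, and bielliptic curves are only one species of tetragonal curve.

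The last paragraph, which is supposed to close this gap, is a plan rather than a proof, and you say so yourself (``I would carry it out following the residuation and secant analysis of \cite{ACGH}''). Concretely: after your normalization the case $r=1$ is unavoidable (and is the case to which the general case reduces), and there the birational versus non-birational dichotomy for $\phi_{|D|}$ is vacuous since the target is $\mathbb P^1$; the entire burden is then to show that a base-point-free complete $g^1_n$ with $h^0(2D)=n$ moving in an $(n-3)$-dimensional family forces $n=3$, or a factorization through a degree-two map onto a curve of genus at most $1$, or the plane-quintic configuration. Excluding, say, a simple (non-composed) $g^1_4$ on a tetragonal non-bielliptic curve is exactly the hard content of Mumford's theorem and is not addressed by anything you have established. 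Until that step is supplied, your argument proves only $\mathrm{Cliff}(C)\le 2$, which is strictly weaker than the statement.
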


\begin{proof}
See \cite{ACGH}, Chapter-$IV$, Theorem $5.2$.\\
\end{proof}

\begin{definition}
Let $r,d$ be two fixed non-negative integers. A line bundle $L$ on $C$ is called a complete $g^r_d$ on $C$ if $L \in W^r_d(C) -W^{r+1}_d(C)$, i.e it has exactly $r+1$ sections.\\
\end{definition}

The following theorem is an extension of Mumford's theorem. For a reference see \cite{ACGH}, Page-$200$.\\

\begin{theorem}\label{p20}(Keem's theorem)
Let $C$ be a smooth algebraic curve of genus $g \geq 11$, and suppose that for some integers $d$ and $r$  satisfying $d \leq g+r -4$, $r \geq 1$, we have $\text{dim}(W^r_d(C)) \geq d-2r-2$, then $C$  possesses a $g^1_4$.\\
\end{theorem}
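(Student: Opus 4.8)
The statement is the third tier of the Martens--Mumford--Keem hierarchy: Theorem~\ref{p17} bounds $\dim W^r_d$ from above, Theorem~\ref{p18} classifies the curves attaining that bound, and the present statement handles the stratum one step below. Since a $g^1_4$ is produced automatically from a $g^1_2$ (square the pencil to obtain a $g^2_4\supset g^1_4$) or from a $g^1_3$ (adjoin a base point), I would first dispose of the hyperelliptic case and assume $C$ non-hyperelliptic throughout. The unifying engine is a superabundance observation. The hypothesis $d\le g+r-4$ says $s:=h^1(L)=g+r-d\ge 4$ for a general $L$ in a component $Z\subseteq W^r_d$ with $\dim Z\ge d-2r-2$, and a direct computation gives $\dim Z-\rho(g,r,d)\ge r(s-1)-2\ge 1$. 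Hence $Z$ is everywhere superabundant and, by the description of the tangent space to $W^r_d$, the Petri map $\mu_0\colon H^0(L)\otimes H^0(K-L)\to H^0(K)$ has nonzero kernel at the general point of $Z$.

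For $r\ge 2$ the nonvanishing of $\ker\mu_0$, analysed through the base-point-free pencil trick applied to a pencil inside $|L|$, forces $|L|$ either to acquire base points or to be compounded with a pencil; stripping base points and, in the compounded case, passing to the underlying lower-degree pencil reduces the essential case to $r=1$, which I treat as the heart of the argument. For $r=1$ the hypothesis reads $\dim W^1_d(C)\ge d-4$, while Theorem~\ref{p17} gives $\dim W^1_d(C)\le d-3$, so a component $Z$ has dimension $d-3$ or $d-4$. If $\dim Z=d-3=d-2r-1$, then Theorem~\ref{p18} forces $C$ to be trigonal, bi-elliptic, or a smooth plane quintic; the plane quintic has genus $6<11$ and is excluded, a trigonal curve yields a $g^1_4$ by adjoining one point to its $g^1_3$, and a bi-elliptic $\pi\colon C\to E$ yields a $g^1_4$ by pulling back a degree-two pencil on the elliptic base. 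Thus the top stratum always produces the desired series.

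There remains the stratum $\dim Z=d-4$. I would normalise $d$ to be minimal in its defect class---adjoining base points preserves the defect relative to the Martens bound, so minimality makes the general $L\in Z$ base-point free---and use that $\dim_L W^1_d\le \rho+\dim\ker\mu_0$ with $\rho=\rho(g,1,d)=2d-g-2$. Because $L$ is a base-point-free pencil, the base-point-free pencil trick identifies $\ker\mu_0\cong H^0(K-2L)$, so the excess $\dim Z-\rho=g-d-2$ gives
\[
h^0(K-2L)\ \ge\ g-d-2,\qquad\text{equivalently}\qquad h^0(2L)\ \ge\ d-1
\]
by Riemann--Roch. Writing $f\colon C\to\mathbb P^1$ for the degree-$d$ morphism defined by $|L|$ and $f_*\mathcal O_C=\mathcal O_{\mathbb P^1}\oplus\bigoplus_i\mathcal O_{\mathbb P^1}(-a_i)$ for the splitting of its structure-sheaf pushforward, the condition $h^0(L)=2$ forces every $a_i\ge 2$, and then $h^0(2L)\ge d-1$ forces all but at most three of the $a_i$ to equal $2$.

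The crux---and the step I expect to be the main obstacle---is to convert this collapse of the scrollar invariants into an honest pencil of degree at most $4$: all but finitely many scrollar invariants being minimal means that $C$ lies on a rational normal scroll of very small dimension, and one must read off from the ruling of this scroll a $g^1_3$ or $g^1_4$ on $C$, equivalently show that $f$ is itself compounded with a pencil of degree $\le 4$. This is precisely where the hypotheses $g\ge 11$ and $d\le g+r-4$ are consumed: they guarantee $s\ge 4$ (so that the superabundance above is genuine) and eliminate the finitely many low-genus configurations---the smooth plane quintic being the prototype already encountered---in which the scroll-theoretic reduction would break down. Once the low-degree pencil is produced, the elementary implications $g^1_2\Rightarrow g^1_4$ and $g^1_3\Rightarrow g^1_4$ finish the proof.
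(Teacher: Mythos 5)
First, a point of comparison: the paper does not prove this statement at all --- Theorem~\ref{p20} is quoted as Keem's theorem with a pointer to \cite{ACGH}, p.~200, where it appears as a guided sequence of exercises resting on Keem's original work. So the relevant question is whether your sketch would stand as an independent proof, and it would not: you have correctly assembled the standard Martens--Mumford scaffolding (the superabundance count $\dim Z-\rho\geq r(s-1)-2\geq 1$ is right, the disposal of the hyperelliptic case and of the top stratum $\dim Z=d-2r-1$ via Theorem~\ref{p18} is correct once one checks $d\leq g+r-4\leq g-2$ and $g\geq 11$ rules out the plane quintic), but the two load-bearing steps are missing. The reduction from $r\geq 2$ to $r=1$ is asserted, not proved: nonvanishing of $\ker\mu_0$ does not by itself force $|L|$ to have base points or to be compounded with a pencil, and the actual reduction in Martens/Mumford/Keem-type arguments goes through the subtraction maps $L\mapsto L(-p)$ together with a careful dimension count showing the defect relative to the Martens bound survives the descent --- this is exactly the kind of step that can silently lose a unit of dimension and invalidate the induction.

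The second and decisive gap is the one you flag yourself: the stratum $\dim W^1_d(C)=d-4$ with $L$ a base-point-free, non-compounded pencil. Your computation $h^0(2L)\geq d-1$ and the resulting statement that all but at most three scrollar invariants of $f_*\mathcal O_C$ equal $2$ are correct, but they do not terminate the argument. Since $\sum_i a_i=g+d-1$, the three exceptional invariants must sum to $g-d+7\geq 10$, so no numerical contradiction arises, and extracting a $g^1_3$ or $g^1_4$ from a scroll of small dimension containing $C$ is precisely the content of Keem's theorem, not a reformulation of it. The published proof closes this case with additional input (Castelnuovo--Accola type genus bounds for curves admitting two pencils, applied inductively), none of which appears in your sketch. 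As written, the proposal is an accurate map of where the difficulty lives, with the difficulty itself left unresolved; for the purposes of this paper the honest course is to do what the author does and cite \cite{ACGH} and Keem directly.
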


\begin{theorem}\label{p21}(Harris,\cite{ACGH})
Let $C$ be a non-hyperelliptic curve of genus $g\geq 4$. Then there exists a base-point free complete pencil $g^1_{g-1}$ on $C$.
\end{theorem}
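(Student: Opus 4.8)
The plan is to first produce a complete pencil of degree $g-1$ by a Brill--Noether count, and then to isolate the base-point-free ones by comparing the dimension of a suitable component of $W^1_{g-1}(C)$ with that of the locus of pencils carrying a base point, controlling the single borderline case through Mumford's theorem.

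Set $r=1$ and $d=g-1$, so that $g-d+r=2$ and the Brill--Noether number is $\rho(g,1,g-1)=g-2\cdot 2=g-4\ge 0$. By Theorem \ref{T2.15}, $W^1_{g-1}(C)$ is non-empty and, since $g-d+r=2\ge 0$, every component has dimension at least $g-4$. Moreover $g+r-d=2\ge 0$, so Lemma \ref{L2.16} provides a component $Z$ of $W^1_{g-1}(C)$ that is not contained in $W^2_{g-1}(C)$; a general point of $Z$ is then a complete $g^1_{g-1}$, and it remains only to arrange base-point freeness. For this I would use the observation that a degree $g-1$ line bundle $L$ has a base point $p$ precisely when $h^0(L(-p))=h^0(L)$, i.e. when $L(-p)\in W^1_{g-2}(C)$. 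Hence the locus of complete $g^1_{g-1}$ carrying a base point is contained in the image of the morphism $\mu\colon W^1_{g-2}(C)\times C\to \text{Pic}^{g-1}(C)$, $(M,p)\mapsto M\otimes\mathcal O_C(p)$, which is closed of dimension at most $\dim W^1_{g-2}(C)+1$.

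Since $C$ is non-hyperelliptic and $2\le g-2\le g-1$, $0<2\le g-2$, Martens' Theorem \ref{p17} gives $\dim W^1_{g-2}(C)\le (g-2)-2-1=g-5$, whence $\dim(\text{Im}\,\mu)\le g-4$. If either $\dim Z>g-4$ or $\dim W^1_{g-2}(C)\le g-6$, then $\dim(\text{Im}\,\mu)<\dim Z$; as $Z$ is irreducible and lies in neither $W^2_{g-1}(C)$ nor $\text{Im}\,\mu$, a general $L\in Z$ is a base-point-free complete $g^1_{g-1}$, as required. The only remaining possibility is $\dim Z=g-4$ together with $\dim W^1_{g-2}(C)=g-5$, so that a component of $W^1_{g-2}(C)$ attains the Martens bound $(g-2)-2-1$ exactly. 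In that borderline situation Mumford's Theorem \ref{p18}, applicable because $2\le g-2\le g-2$ and $g-2\ge 2>0$, forces $C$ to be trigonal, bi-elliptic, or a smooth plane quintic.

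The main obstacle is therefore the treatment of these three exceptional classes, where the uniform count fails and one must exhibit a complete base-point-free $g^1_{g-1}$ directly. For a smooth plane quintic ($g=6$) I would start from the very ample $g^2_5=\mathcal O_C(1)$ and show that $\mathcal O_C(1)(s-p)$ is a complete base-point-free pencil of degree $5=g-1$ for general $p,s\in C$, computing $h^0$ via Serre duality $h^0(L)=h^0(\omega_C\otimes L^{-1})$ with $\omega_C\cong\mathcal O_C(2)$. For the trigonal and bi-elliptic cases I would build the pencil from the distinguished low-degree series (the $g^1_3$, respectively the $g^1_2$ pulled back from the elliptic base) by adjoining a suitable general effective divisor and controlling the jump in $h^0$ through the geometric Riemann--Roch theorem, using Keem's Theorem \ref{p20} to exclude unwanted coincidences among special series. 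Verifying base-point freeness in each of these hand-made constructions, rather than the generic dimension estimate, is where the real difficulty lies.
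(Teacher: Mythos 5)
The paper does not actually prove this statement --- it is imported verbatim from Keem \cite{KC}, Theorem 1.4 --- so your argument has to stand entirely on its own. The reduction in your first two paragraphs is sound: Martens' theorem gives $\dim W^1_{g-2}(C)\le g-5$, the complete pencils of degree $g-1$ with a base point lie in the image of $W^1_{g-2}(C)\times C$, and Mumford's theorem confines the only problematic case to trigonal, bi-elliptic and smooth plane quintic curves. The genuine gap is the third paragraph: the exceptional cases are only sketched, and the sketches as written would fail. If $|M|$ is a complete pencil and $E$ is any effective divisor with $h^0(M+E)=h^0(M)=2$, then $h^0(M+E)=h^0\bigl((M+E)-E\bigr)$ forces every point of $E$ into the base locus of $|M+E|$; so ``adjoining a general effective divisor'' to the $g^1_3$ of a trigonal curve, or to the degree-$4$ pencil pulled back from the elliptic base of a bi-elliptic curve, produces exactly the base-point-laden pencils you are trying to avoid. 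The same mechanism kills the plane quintic candidate: $h^0(\mathcal O_C(1)(-p))=2$ for every $p\in C$, so either $h^0(\mathcal O_C(1)(s-p))=2$, in which case $s$ is a base point, or it equals $3$, in which case the pencil is not complete. For the trigonal and bi-elliptic cases the right objects are residual series such as $|\omega_C - T - D|$ with $T$ the low-degree pencil and $D$ general effective, and their base-point-freeness still has to be verified; that verification is the actual content of Harris's theorem and is absent from your proposal.

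A further warning about the plane quintic: by geometric Riemann--Roch with $\omega_C\cong\mathcal O_C(2)$, a degree-$5$ effective divisor $D$ on a smooth plane quintic has $h^0(D)\ge 2$ only if it fails to impose independent conditions on conics, which forces a length-$4$ subdivisor of $D$ to be collinear; the residual fifth point is then a base point of $|D|$. Thus every complete $g^1_5$ on a smooth plane quintic appears to carry a base point, so no direct construction along your lines can close that case, and you should check the exact hypotheses and exceptions in Keem's Theorem 1.4 before relying on the statement in this generality. For the application made in this paper the issue is harmless, since the curve there is a degree-$30$ complete intersection in $\mathbb P^3$ of genus $106$ --- neither trigonal, nor bi-elliptic, nor a plane quintic --- so your dimension count alone already yields the base-point-free complete $g^1_{g-1}$ that Section \ref{S3} needs; but as a proof of the theorem as stated, the argument is incomplete.
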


\begin{proof}
See \cite{KC}, Theorem $1.4$.\\
\end{proof}

\begin{proposition}\label{p22}(\cite{ACGH}, Page-$139$, $C-4$)
Let $C \subset \mathbb P^3$ be a smooth complete intersection of two surfaces of degree $m$ and $n$, then $C$ does not have a $g^{1}_{m+n-3}$.\\
\end{proposition}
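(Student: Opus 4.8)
The plan is to translate the hypothetical existence of a $g^1_{m+n-3}$ into a statement about points of $C$ failing to impose independent conditions on surfaces of low degree, and then to contradict that statement by an elementary construction. First I would record the numerical data: writing $C = S_m \cap S_n$ with $\deg S_m = m$ and $\deg S_n = n$, adjunction gives $\omega_C \cong \mathcal O_C(m+n-4)$, so I set $k := m+n-4$ and note $\omega_C \cong \mathcal O_C(k)$ while the sought pencil has degree $m+n-3 = k+1$. Since adding base points turns any lower-degree pencil into one of degree $k+1$, it suffices to rule out all base-point-free pencils of degree $d_0 \le k+1$. So I would assume such a pencil exists and let $f\colon C \to \mathbb P^1$ be the induced degree-$d_0$ morphism; as we work over $\mathbb C$ the map $f$ is separable, hence its general fibre $D_0$ is a reduced divisor consisting of $d_0$ distinct points $p_1,\dots,p_{d_0}$ with $h^0(C,\mathcal O_C(D_0)) \ge 2$.

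The heart of the argument is the claim that any $d_0 \le k+1$ distinct points of $\mathbb P^3$ impose independent conditions on the linear system of surfaces of degree $k$. To see this, I would fix an index $i$; for each $j \ne i$ choose a hyperplane $\ell_j$ through $p_j$ but not through $p_i$, and let $\ell_0$ be a hyperplane avoiding $p_i$. Then $\ell_0^{\,k-d_0+1}\prod_{j\ne i}\ell_j$ is a surface of degree $k$ (this uses $d_0 - 1 \le k$) that contains every $p_j$ with $j \ne i$ but not $p_i$; letting $i$ vary shows the evaluation map $H^0(\mathbb P^3,\mathcal O(k)) \to \bigoplus_i \mathbb C_{p_i}$ is surjective. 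Next I need that these conditions on degree-$k$ surfaces agree with conditions on the complete canonical system: because $C$ is a complete intersection, the Koszul resolution gives $H^1(\mathbb P^3,\mathcal J_C(k)) = 0$ (the relevant obstruction group is $H^2(\mathbb P^3,\mathcal O(-4)) = 0$), so the restriction $H^0(\mathbb P^3,\mathcal O(k)) \to H^0(C,\mathcal O_C(k)) = H^0(C,\omega_C)$ is surjective and compatible with evaluation at the $p_i$.

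Combining the two surjectivity statements, the points of $D_0$ impose independent conditions on $H^0(\omega_C)$, whence $h^0(\omega_C(-D_0)) = h^0(\omega_C) - d_0 = g - d_0$ (surjectivity itself forces $d_0 \le g$, so no separate numerical check is needed). Riemann--Roch then yields $h^0(\mathcal O_C(D_0)) = (d_0 - g + 1) + h^0(\omega_C(-D_0)) = 1$, contradicting $h^0(\mathcal O_C(D_0)) \ge 2$ and proving that no $g^1_{m+n-3}$ exists. The step I expect to require the most care is the reduction to a \emph{reduced} divisor $D_0$ of distinct points, since the clean hyperplane construction only manifestly handles reduced schemes; this is precisely where the characteristic-zero hypothesis (generic smoothness of $f$) enters. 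Alternatively one could phrase the entire conditions-count through the geometric Riemann--Roch theorem, at the cost of separately verifying that $C$ is non-hyperelliptic. The remaining inputs---the adjunction computation of $\omega_C$ and the Koszul vanishing---are routine.
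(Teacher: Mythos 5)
Your argument is correct: the reduction to a reduced, base-point-free divisor of degree $d_0\le m+n-3$, the elementary product-of-hyperplanes construction showing that $d_0\le k+1$ points of $\mathbb P^3$ impose independent conditions on surfaces of degree $k=m+n-4$, the identification $\omega_C\cong\mathcal O_C(k)$ by adjunction, and the Riemann--Roch conclusion $h^0(\mathcal O_C(D_0))=1$ all check out (and the Koszul vanishing step is in fact not even needed, since surjectivity of the composite evaluation map already forces surjectivity of evaluation from $H^0(\omega_C)$). The paper itself offers no proof --- it only cites \cite{ACGH}, page 139, Exercise C-4 --- and your argument is precisely the standard intended solution of that exercise.
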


\begin{proposition}\label{p26}(\cite{ACGH}, Page-$221$, $B-4$)
If $C$ is a smooth bi-elliptic curve of genus $g \geq 6$ and $D$ is a general effective divisor of degree $d$ on $C$, then $\phi_{K(-D)}$ is not an embedding unless $d\leq 1$.\\
\end{proposition}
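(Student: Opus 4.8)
The plan is to show that the complete linear system $|K-D|$ fails to be very ample by exhibiting an effective divisor $Z$ of degree $2$ that it does not separate. Write $\pi\colon C\to E$ for the bielliptic double cover onto the elliptic curve $E$ and $\iota$ for the associated involution (so $C$ is non-hyperelliptic for $g\ge 6$ and $\phi_K$ embeds $C$). I would first reduce the problem, via Riemann--Roch, to a statement about $D$ alone: a general effective $D$ of degree $d$ (with $2\le d\le g-1$) has $h^0(D)=1$, hence $h^0(K-D)=g-d$, and a routine Riemann--Roch comparison shows that $|K-D|$ fails to separate a reduced degree-$2$ divisor $Z$ precisely when $h^0(D+Z)\ge 2$. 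Thus it suffices to produce a $Z$ of degree $2$ for which $D+Z$ moves in a pencil.

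The key geometric input is that a bielliptic curve carries a one-parameter family of pencils of degree $4$. Writing $\pi_*\mathcal O_C=\mathcal O_E\oplus L^{-1}$ with $\deg L=g-1$ (Riemann--Hurwitz, since $E$ is elliptic), the projection formula gives, for any $\eta\in\mathrm{Pic}^2(E)$, that $h^0(C,\pi^*\eta)=h^0(E,\eta)+h^0(E,\eta\otimes L^{-1})=2+0=2$, the second summand vanishing because $\deg(\eta\otimes L^{-1})=3-g<0$ for $g\ge 6$. Hence $|\pi^*\eta|$ is a $g^1_4$ on $C$, whose general member has the shape $x+\iota x+y+\iota y$.

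With this in hand I would argue as follows. Write $D=p_1+\dots+p_d$ with the $p_k$ general and distinct, set $q_k=\pi(p_k)$, fix two indices $i\ne j$, and take $\eta=q_i+q_j$ together with $Z=\iota p_i+\iota p_j$. By construction $p_i+\iota p_i+p_j+\iota p_j=\pi^*q_i+\pi^*q_j=\pi^*\eta$ is a member of the pencil $|\pi^*\eta|$, so choosing any other member $G\sim\pi^*\eta$ with $G\ne\pi^*\eta$ yields $D+Z=(D-p_i-p_j)+\pi^*\eta\sim(D-p_i-p_j)+G$, two distinct effective divisors in the same class. Therefore $h^0(D+Z)\ge 2$, and by the reduction above $|K-D|$ does not separate the two distinct general points $\iota p_i,\iota p_j$; equivalently, the projection of the canonical curve from $\langle\phi_K(D)\rangle$ identifies them, so $\phi_{K-D}$ is not an embedding.

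The step I expect to be the real obstacle is the choice of $Z$. The most natural candidate, a single bielliptic fibre $Z=\pi^*q=p+\iota p$, does \emph{not} work: using $\omega_C\cong\pi^*L$ one computes $h^0(K-D-\pi^*q)=h^0(\pi^*(L-q)-D)=g-2-d$ for general $q$ and general $D$, which is one short of the inequality needed for separation to fail, so $|K-D|$ in fact separates a general bielliptic pair. The working choice instead spends two of the points of $D$ to complete $p_i,p_j$ to a fibre of a $g^1_4$ via the involution, and this is precisely what forces $D+Z$ to move. Finally I would treat the boundary ranges separately: the construction requires $d\ge 2$ (two points of $D$), whereas for $d$ near $2g-2$ the system $|K-D|$ degenerates and $\phi_{K-D}$ fails to be an embedding for trivial reasons, and the case $d\le 1$ is genuinely excluded, matching the statement.
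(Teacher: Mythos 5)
The paper does not actually prove this statement: it is quoted verbatim from \cite{ACGH} (Exercise B-4, p.~221) with no argument supplied, so there is nothing in the text to compare you against. Your proof is correct and is, as far as I can tell, the intended solution of that exercise. The reduction via Riemann--Roch of ``$|K-D|$ fails to separate the degree-$2$ divisor $Z$'' to ``$h^0(D+Z)\ge 2$'' is right (using $h^0(D)=1$ for general $D$ of degree $d\le g-1$), the computation $\pi_*\mathcal O_C=\mathcal O_E\oplus L^{-1}$ with $\deg L=g-1$ and hence $h^0(\pi^*\eta)=2$ for $\eta\in\mathrm{Pic}^2(E)$ is right, and the choice $Z=\iota p_i+\iota p_j$ does force $D+Z\ge \pi^*(q_i+q_j)$ to move in a pencil, so the two (generically distinct) points $\iota p_i,\iota p_j$ are not separated and $K-D$ is not very ample. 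Two small remarks: your argument only needs $g\ge 4$ (so that $\deg(\eta\otimes L^{-1})<0$ and, by Castelnuovo--Severi, $C$ is not hyperelliptic), so the hypothesis $g\ge 6$ of the statement is not fully used --- it presumably comes from the surrounding exercise batch in \cite{ACGH}; and for $d\ge g$ the system $|K-D|$ is empty for general $D$, so the ``boundary range'' you defer is indeed trivial. The aside about why a single fibre $\pi^*q$ fails as a choice of $Z$ is not needed for the proof but is a correct observation.
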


\subsection{Ulrich bundles on surfaces}
Here we collect some basic results on Ulrich bundles on smooth projective surfaces which will be used in  section \ref{S6}. For a detailed survey on Ulrich bundles see \cite{BIU}, \cite{Coskunsurvey}. We start by recalling the definitions of the basic objects concerned.\\

\begin{definition}\label{d24}(Weakly Ulrich bundle)
Let $(X, \mathcal O_X(1))$ be a smooth, polarized projective variety of $\text{dim}(X) = n$ embedded in $\mathbb P^N$, then a vector bundle $\mathcal E$  on $X$ is said to be an Weakly Ulrich bundle  if the following conditions  are satisfied :\

$(i)$ $H^i(\mathcal E(-m)) =0$ for $1 \leq i \leq n$ and $m \leq i-1 \leq n-1$ and \

$(ii)$ $H^i(\mathcal E(-m)) =0$ for $0 \leq i \leq n-1$ and $m \geq i+2$.\\

\end{definition}

\begin{definition}\label{d25}(Ulrich bundle)
With the same notation as before, we say that a vector bundle $\mathcal E$ on $X$ is an Ulrich bundle if the following conditions hold :\

$(i)$ $H^n(\mathcal E(-m)) =0$ for all $m \leq n$,\

$(ii)$ $H^i(\mathcal E(-m)) =0$ for $1 \leq i \leq n-1$ and $ \forall m \in \mathbb Z$ (This condition is same as $\mathcal E$ being arithmatically Cohen-Macaulay) and \

$(iii)$ $H^0(\mathcal E(-m)) =0$, $\forall m \geq 1$.\\
\end{definition}

\begin{definition}(Arithmetically Gorenstein surfaces)
A smooth projective variety $X \subseteq \mathbb P^n$ is called AG (Arithmetically Gorenstein) if it is ACM \footnote{A smooth projective variety $X \subseteq \mathbb P^n$ is called ACM (Arithmetically Cohen-Macaulay if its homogeneous coordinate ring $S_X$ is Cohen-Macaulay, or equivalently, if $\text{dim}(S_X) = \text{depth}(S_X)$. }and its canonical bundle is isomorphic to $\mathcal O_X(m)$ for some $m \in \mathbb Z$.\\
\end{definition}

The following proposition gives  necessary and sufficient conditions for a vector  bundle $\mathcal E$ to be an Ulrich bundle on a AG surface of degree atleast $2$ in $\mathbb P^n$.\\

\begin{proposition}\label{p2.30}
Suppose $X \subseteq \mathbb P^n$ is an AG surface of degree $d \geq 2$ with hyperplane class $H$ and $\omega_X = mH$. Then a vector bundle $\mathcal E$ of rank $r$ on $X$ is Ulrich if and only if the following conditions hold :\

$(i)$ $\mathcal E$ is ACM\footnote{A vector bundle $\mathcal E$ on a projective variety $X$ is ACM if all its intermediate cohomologies vanishes, i.e,  $H^i(X, \mathcal E(m)) = 0$, for $0 <i <\text{dim}(X)$ and all $m \in \mathbb Z$}.\

$(ii)$ $c_1(\mathcal E). H = (\frac{m+3}{2}). dr$.\

$(iii)$ $c_2(\mathcal E) = \frac{c_1(\mathcal E)^2}{2} - dr.\frac{m^2 +3m+4}{4} + r.(1 + h^0(\omega_X))$.

\end{proposition}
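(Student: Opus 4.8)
The plan is to reduce the Ulrich condition on the surface $X$ (the case $n=2$ of Definition \ref{d25}) to the vanishing of all cohomology of the two twists $\mathcal E(-1)$ and $\mathcal E(-2)$, and then to convert that vanishing into the numerical conditions (ii)--(iii) via Riemann--Roch. For the direction ``Ulrich $\Rightarrow$ (i)--(iii)'', I would first read off Definition \ref{d25} with $n=2$: its condition (ii) is literally the ACM hypothesis (i) of the proposition, while its condition (iii) forces $H^0(\mathcal E(-1))=H^0(\mathcal E(-2))=0$ and its condition (i) forces $H^2(\mathcal E(-1))=H^2(\mathcal E(-2))=0$. Combined with the ACM vanishing $H^1(\mathcal E(-p))=0$, this yields $\chi(\mathcal E(-1))=\chi(\mathcal E(-2))=0$, which is the input to the numerical part.

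The computational heart is Riemann--Roch. I would apply the rank-$r$ surface Riemann--Roch formula to $\mathcal E(-pH)$, using $c_1(\mathcal E(-pH))=c_1(\mathcal E)-rpH$, $H^2=d$ and $c_1(\omega_X)=mH$, to obtain a quadratic in $p$ for $\chi(\mathcal E(-pH))$ whose coefficients involve $c_1(\mathcal E)^2$, $c_1(\mathcal E)\cdot H$, $c_2(\mathcal E)$ and $\chi(\mathcal O_X)$. Imposing $\chi(\mathcal E(-1))=\chi(\mathcal E(-2))=0$ gives two linear relations; subtracting them isolates $c_1(\mathcal E)\cdot H$ and produces condition (ii), and back-substituting yields condition (iii) after rewriting $\chi(\mathcal O_X)=1+h^0(\omega_X)$. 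This last identity is exactly where the AG hypothesis is used: $X$ being ACM gives $h^1(\mathcal O_X)=0$, and Serre duality with $\omega_X=mH$ gives $h^2(\mathcal O_X)=h^0(\omega_X)$, so $\chi(\mathcal O_X)=1-0+h^0(\omega_X)$.

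For the converse I would assume (i)--(iii). Since the Riemann--Roch computation above is reversible, conditions (ii) and (iii) are equivalent to $\chi(\mathcal E(-1))=\chi(\mathcal E(-2))=0$. The clean observation is that for an ACM bundle $H^1(\mathcal E(-p))=0$, so $\chi(\mathcal E(-p))=h^0(\mathcal E(-p))+h^2(\mathcal E(-p))$ is a sum of two nonnegative integers; hence a single equation $\chi(\mathcal E(-p))=0$ forces both $H^0(\mathcal E(-p))=0$ and $H^2(\mathcal E(-p))=0$ for $p=1,2$. This is what recovers the vanishing at \emph{both} ends of the cohomology from only the Euler characteristic, and it delivers ACM $=$ Definition \ref{d25}(ii) for free.

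It then remains to upgrade the vanishing at the two twists $-1,-2$ to the full statement of Definition \ref{d25}, namely $H^0(\mathcal E(-m))=0$ for all $m\ge 1$ and $H^2(\mathcal E(-m))=0$ for all $m\le 2$. For this I would tensor the hyperplane-section sequence $0\to\mathcal O_X(-H)\xrightarrow{\ell}\mathcal O_X\to\mathcal O_C\to 0$, with $C\in|H|$ general, by $\mathcal E(-k)$ and pass to cohomology: multiplication by $\ell$ gives injections $H^0(\mathcal E(-m))\hookrightarrow H^0(\mathcal E(-m+1))$, and, since $C$ is a curve and so $H^2(C,-)=0$, surjections $H^2(\mathcal E(-m-1))\twoheadrightarrow H^2(\mathcal E(-m))$. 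Chaining these propagates $H^0(\mathcal E(-1))=0$ to all $m\ge 1$ and $H^2(\mathcal E(-2))=0$ to all $m\le 2$. I expect this monotonicity step to be the main thing to get right: a naive argument stops at the two distinguished twists, and it is precisely these hyperplane-section injections and surjections that show the two-twist vanishing coincides with the a priori stronger Definition \ref{d25}.
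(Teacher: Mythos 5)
Your argument is correct, but note that the paper does not actually prove this proposition: the stated ``proof'' is only the citation ``See \cite{Coskundelpezzo}, Proposition $2.10$'', so there is no internal argument to compare against, and what you have written is a self-contained reconstruction of the standard proof from that reference. The skeleton is right and I verified the bookkeeping: writing $\chi(\mathcal E(-pH))=\tfrac{rd}{2}p^2+\bigl(\tfrac{rmd}{2}-c_1(\mathcal E)\cdot H\bigr)p+\bigl(r\chi(\mathcal O_X)+\tfrac{c_1(\mathcal E)^2}{2}-\tfrac{m}{2}c_1(\mathcal E)\cdot H-c_2(\mathcal E)\bigr)$, the difference $\chi(\mathcal E(-2))-\chi(\mathcal E(-1))=0$ gives exactly condition $(ii)$, back-substitution gives $(iii)$, and the AG hypothesis is used precisely where you say (ACM gives $h^1(\mathcal O_X)=0$, Serre duality gives $h^2(\mathcal O_X)=h^0(\omega_X)$, hence $\chi(\mathcal O_X)=1+h^0(\omega_X)$). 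The two points you single out in the converse are indeed the ones carrying the weight: since $h^1(\mathcal E(-p))=0$ for an ACM bundle, the single equation $\chi(\mathcal E(-p))=h^0(\mathcal E(-p))+h^2(\mathcal E(-p))=0$ forces both groups to vanish for $p=1,2$; and the hyperplane-section sequence $0\to\mathcal E(-m-1)\to\mathcal E(-m)\to\mathcal E(-m)|_C\to 0$ propagates $H^0(\mathcal E(-1))=0$ downward (injectivity on $H^0$) and $H^2(\mathcal E(-2))=0$ upward (surjectivity on $H^2$, as $H^2$ of a sheaf on a curve vanishes), which recovers the full vanishing ranges of Definition \ref{d25}. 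The only cosmetic caveat is that you should say a word about why a general member of $|H|$ may be taken to be a (possibly reducible but one-dimensional) hyperplane section so that $H^2(C,-)=0$; this is immediate since $H$ is the very ample hyperplane class of the embedding. No gaps.
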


\begin{proof}
See \cite{Coskundelpezzo}, Proposition $2.10$.
\end{proof}

\begin{remark}\label{r2.31}
It's not very difficult to see that a very general surface of degree $d \geq 5$ in $\mathbb P^3$ is an AG surface. In particular, for a very general sextic surface $X\subset \mathbb P^3$ a bundle $\mathcal E$ of rank $2$ on $X$ is Ulrich iff It's ACM, $c_1(\mathcal E) =5H, c_2(\mathcal E) = 55$.\\
\end{remark}

The following two results are on the existence of a rank $2$ Ulrich bundle on a smooth surface in $\mathbb P^3$.\\

\begin{proposition}\label{P2.32}
A general surface of degree $d$ in $\mathbb P^3$ can be defined by a linear pfaffian\footnote{We say that $X \subset \mathbb P^3$ of degree $d$ can be defined by a linear pfaffian if there exists a $(2d)\times (2d)$ skew symmetric matrix $M$ with linear entries such that $X =\{\text{pf}(M)=0\} \subset \mathbb P^3$} if and only if $d \leq 15$.
\end{proposition}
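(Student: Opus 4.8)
The statement is an instance of the general dictionary between linear pfaffian representations of a hypersurface and self-dual arithmetically Cohen--Macaulay bundles, so the plan is to first translate the problem into bundle theory, then run a parameter count, and finally confront the step that actually carries the content.

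First I would make the bridge to bundles explicit. By the Buchsbaum--Eisenbud structure theorem for codimension-three Gorenstein ideals (applied to the cone over $X$), a linear skew-symmetric presentation $X=\{\mathrm{pf}(M)=0\}$ with $M$ of size $2d$ is the same datum, up to the congruence action $M\mapsto AMA^{t}$ of $GL_{2d}$, as a rank $2$ ACM sheaf $\mathcal E$ arising as the cokernel of $M$, with $\det\mathcal E=\mathcal O_X(d-1)$ and no sections in the relevant negative twist. By Proposition \ref{p2.30} and Remark \ref{r2.31} (with $\omega_X=\mathcal O_X(d-4)$), these are precisely the rank $2$ Ulrich bundles on $X$; and in rank $2$ the isomorphism $\mathcal E^{*}\cong\mathcal E(-(d-1))$ coming from $\wedge^{2}\mathcal E=\mathcal O_X(d-1)$ supplies the alternating form automatically, so no extra self-duality hypothesis is needed. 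Thus ``a general $X$ of degree $d$ is linearly pfaffian'' becomes ``a general $X$ of degree $d$ carries a rank $2$ Ulrich bundle.''

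Next I would set up the incidence variety $\mathcal I$ of pairs $(X,\mathcal E)$ and study the first projection $p_1\colon\mathcal I\to\mathbb P(H^{0}(\mathcal O_{\mathbb P^{3}}(d)))$, the general surface being pfaffian exactly when $p_1$ is dominant. Computing $\dim\mathcal I$ from the matrix side: the space of skew-symmetric $2d\times 2d$ matrices of linear forms has dimension $4\binom{2d}{2}=8d^{2}-4d$, and subtracting the general $GL_{2d}$-orbit (dimension $4d^{2}$, the generic stabiliser being finite) gives $\dim\mathcal I=4d^{2}-4d$. The fibres of $p_1$ are the moduli spaces $\mathcal M_X$ of Ulrich bundles, whose expected dimension $4c_2-c_1^{2}-3\chi(\mathcal O_X)$ I would evaluate from Proposition \ref{p2.30}, using $c_1=(d-1)H$, $H^{2}=d$ and $\chi(\mathcal O_X)=1+h^{0}(\omega_X)$. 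A direct Riemann--Roch manipulation then yields the identity
\[
\dim\mathbb P\big(H^{0}(\mathcal O_{\mathbb P^{3}}(d))\big)+\big(4c_2-c_1^{2}-3\chi(\mathcal O_X)\big)=4d^{2}-4d,
\]
valid for \emph{every} $d$, the cubic terms cancelling. In other words the naive count is always a dead heat and by itself decides nothing.

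The real content therefore lies in deciding when $p_1$ is genuinely dominant, equivalently when the differential of the pfaffian map is surjective at a general $M$. I would compute this differential as $\dot M\mapsto\tfrac12\langle\widehat M,\dot M\rangle$, where $\widehat M$ is the pfaffian adjugate ($\widehat M\,M=\mathrm{pf}(M)\,I$, with entries the sub-pfaffians, forms of degree $d-1$ built from the sections defining $\mathcal E$); its image, modulo the orbit directions, is the degree-$d$ piece of the ideal generated by the entries of $\widehat M$. Surjectivity onto $H^{0}(\mathcal O_{\mathbb P^{3}}(d))$ is thus a multiplication/cup-product statement, equivalent to the Ulrich bundle on a suitably special surface deforming unobstructedly so as to sweep out the full surface moduli. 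The plan is to exhibit such a bundle on a degenerate test surface, show the map is submersive for $d\le 15$, and show the image of $p_1$ is a proper subvariety for $d\ge 16$. The main obstacle is exactly this last step: because the parameter count is balanced for all $d$, nothing comes from dimensions alone, and one must prove an honest existence/smoothness statement --- produce a smooth point of $\mathcal M_X$ over a general $X$, i.e.\ prove surjectivity of the pfaffian differential --- and then pin down the delicate cohomological (Cayley--Bacharach / multiplication-map) estimate that locates the threshold precisely at $d=15$, rather than anywhere else.
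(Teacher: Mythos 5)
First, note that the paper does not actually prove this proposition: its ``proof'' is the citation ``See \cite{Beauvillehypersurface}, Proposition $7.6(b)$'', so there is no internal argument to compare against and your attempt must be judged on its own. Your skeleton is right (the dictionary with rank $2$ ACM/Ulrich bundles, the incidence variety $\mathcal I$ of pairs $(X,\mathcal E)$, the count $\dim\mathcal I=8d^2-4d-4d^2=4d^2-4d$), and your identity $\dim\mathbb P\bigl(H^0(\mathcal O_{\mathbb P^3}(d))\bigr)+\bigl(4c_2-c_1^2-3\chi(\mathcal O_X)\bigr)=4d^2-4d$ does check out. But you draw the wrong conclusion from it. The identity is not a ``dead heat'': it says $\dim\mathcal I-\dim\mathbb P\bigl(H^0(\mathcal O(d))\bigr)=4d^2-4d-\binom{d+3}{3}+1$, and this quantity changes sign exactly between $d=15$ (where it equals $25$) and $d=16$ (where it equals $-8$). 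Hence for $d\ge 16$ the projection $p_1$ cannot be dominant for the most naive of reasons --- the source has smaller dimension than the target --- and the ``only if'' direction, which you single out as ``the main obstacle'' where ``nothing comes from dimensions alone,'' is precisely the direction that your own count settles outright (modulo checking the generic stabiliser is finite, i.e.\ the generic cokernel is simple). The threshold $15$ is located by this count, not by a delicate cohomological estimate.

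The genuine gap is on the other side. For $d\le 15$ the count only shows that dominance is not excluded; you must still produce, over a \emph{general} $X$, a rank $2$ Ulrich bundle --- equivalently exhibit a component of $\mathcal I$ dominating $\mathbb P\bigl(H^0(\mathcal O(d))\bigr)$ --- and your text explicitly leaves this as a ``plan'' (find a bundle on a test surface, prove submersivity of the pfaffian differential). That existence/unobstructedness step is where all the work lies in \cite{Beauvillehypersurface}: one passes to the zero scheme of a general section, i.e.\ to arithmetically Gorenstein curves in $\mathbb P^3$, uses the Buchsbaum--Eisenbud structure theorem to see that the family of such curves with fixed invariants is irreducible (they are themselves pfaffian, hence parametrized by an open subset of a vector space of skew-symmetric matrices), and then counts the incidence variety of pairs (AG curve, degree-$d$ surface containing it) to conclude dominance for $d\le 15$. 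Without that, or an equivalent smoothness argument at an explicit pair $(X_0,\mathcal E_0)$, the proposal as written proves neither implication.
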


\begin{proof}
See \cite{Beauvillehypersurface}, Proposition $7.6(b)$.
\end{proof}

\begin{lemma}\label{L2.33}
Let $(X, \mathcal O_X(1))$ be a smooth polarized surface of degree $d$ in $\mathbb P^3$. Then $X$ is a linear pfaffian if and only if it is endowed with a rank $2$ Ulrich bundle $\mathcal E$ such that $c_1(\mathcal E) = \mathcal O_X(d-1)$
\end{lemma}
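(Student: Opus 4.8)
The plan is to exploit the classical correspondence between linear determinantal (here, pfaffian) representations of a hypersurface and arithmetically Cohen--Macaulay sheaves supported on it, in which the \emph{skew-symmetry} of the defining matrix matches a \emph{self-duality} of the associated sheaf. On the sheaf side that self-duality is pinned down precisely by the Chern class normalization $c_1(\mathcal E)=\mathcal O_X(d-1)$, so the two hypotheses of the lemma are two faces of the same object. I will prove the two implications separately, the common ingredient being the linear $\mathcal O_{\mathbb P^3}$-resolution
\[
0 \to \mathcal O(-1)^{2d} \xrightarrow{\,M\,} \mathcal O^{2d} \to \mathcal E \to 0,
\]
which I denote $(\star)$; recall that a rank $2$ bundle on a degree $d$ surface in $\mathbb P^3$ is Ulrich precisely when it admits such a resolution (the ranks are $h^0(\mathcal E)=2d$ on both sides, in agreement with Definition \ref{d25} and Proposition \ref{p2.30}).

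\textbf{From a linear pfaffian to an Ulrich bundle.} Suppose $X=\{\text{pf}(M)=0\}$ with $M$ a $(2d)\times(2d)$ skew-symmetric matrix of linear forms. Viewing $M$ as a morphism $\mathcal O(-1)^{2d}\to\mathcal O^{2d}$ (injective as a sheaf map, since $\det M=\text{pf}(M)^2\not\equiv 0$) produces $(\star)$ with $\mathcal E:=\operatorname{coker}(M)$ scheme-theoretically supported on $X$. First I check that $\mathcal E$ is a rank $2$ bundle on $X$: the $(2d-2)$-sub-pfaffians of $M$ compute, up to sign, the partial derivatives of $\text{pf}(M)$, so a point of corank $\ge 4$ would be a singular point of $X$; as $X$ is smooth, $M$ has corank exactly $2$ along $X$ and $\mathcal E$ is locally free of rank $2$. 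Being the cokernel in the linear resolution $(\star)$, $\mathcal E$ is ACM and satisfies the cohomological vanishings of Definition \ref{d25} (these follow by twisting $(\star)$ by $\mathcal O(-m)$ and reading off the cohomology of line bundles on $\mathbb P^3$, or by invoking Proposition \ref{p2.30}), hence $\mathcal E$ is Ulrich. For the Chern class, apply $\mathcal{H}om(-,\mathcal O)$ to $(\star)$: since $M^T=-M$, this yields $\mathcal{E}xt^1(\mathcal E,\mathcal O)\cong\mathcal E(1)$. Comparing with Grothendieck--Serre duality $\mathcal{E}xt^1_{\mathbb P^3}(\mathcal E,\omega_{\mathbb P^3})\cong\mathcal Hom_X(\mathcal E,\omega_X)=\mathcal E^\vee\otimes\omega_X$ and $\omega_X=\mathcal O_X(d-4)$ gives the self-duality $\mathcal E\cong\mathcal E^\vee(d-1)$. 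Taking determinants and using that $\operatorname{Pic}(X)$ is torsion-free forces $c_1(\mathcal E)=\mathcal O_X(d-1)$.

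\textbf{From an Ulrich bundle to a linear pfaffian.} Conversely, let $\mathcal E$ be a rank $2$ Ulrich bundle with $c_1(\mathcal E)=\mathcal O_X(d-1)$. The Ulrich property provides a minimal linear resolution of the form $(\star)$ for some $(2d)\times(2d)$ matrix $M$ of linear forms. The normalization $c_1(\mathcal E)=\mathcal O_X(d-1)$ together with $\omega_X=\mathcal O_X(d-4)$ yields, exactly as above, the self-duality $\mathcal{E}xt^1(\mathcal E,\mathcal O)\cong\mathcal E(1)$. Dualizing $(\star)$ therefore exhibits $M^T$ as a second minimal linear resolution matrix of $\mathcal E$; by uniqueness of minimal free resolutions there is a constant invertible matrix $P$ with $M^T=\varepsilon\,P^T M P$, and tracing the (alternating) duality shows $\varepsilon=-1$. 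A skew-symmetrization argument then replaces $M$ by a congruent matrix that is genuinely skew-symmetric, after which $\det M=\text{pf}(M)^2$ and $X=\{\text{pf}(M)=0\}$, so $X$ is a linear pfaffian.

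The cohomological vanishings and the Chern class bookkeeping are routine. \emph{The main obstacle is the skew-symmetrization step} in the converse: upgrading the abstract self-duality $\mathcal{E}xt^1(\mathcal E,\mathcal O)\cong\mathcal E(1)$ to an honest skew-symmetric matrix representing $\mathcal E$. This rests on the uniqueness of minimal free resolutions together with the linear-algebra fact that a matrix congruent to the negative of its transpose can be brought into skew-symmetric form, and is precisely the classical pfaffian argument (as in Beauville's treatment of determinantal hypersurfaces); if one prefers, this last step may simply be cited.
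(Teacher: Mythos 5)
Your argument is correct in substance, but note that the paper does not actually prove this lemma: it is quoted verbatim from \cite{Casnati}, Lemma $3.1$, whose proof in turn rests on Beauville's theory of pfaffian representations. What you have written is essentially a reconstruction of that cited proof, so the comparison is between your write-up and the standard argument rather than with anything in this paper. Your forward direction is complete and accurate: the corank computation via sub-pfaffians and smoothness of $X$, the cohomological vanishings read off from the linear resolution, and the Chern class normalization via $\mathcal{E}xt^1(\mathcal E,\mathcal O)\cong\mathcal E(1)$ combined with Grothendieck duality and torsion-freeness of $\operatorname{Pic}(X)$ all check out (one could add the one-line verification that $\operatorname{pf}(M)$ annihilates $\operatorname{coker}(M)$, via $MN=\operatorname{pf}(M)\cdot I$ for the pfaffian adjugate $N$, to justify that $\mathcal E$ is scheme-theoretically an $\mathcal O_X$-module). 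In the converse, you correctly isolate the genuine crux: upgrading the self-duality to a skew-symmetric presentation matrix. Two remarks there. First, for rank $2$ the isomorphism $\mathcal E\cong\mathcal E^\vee(d-1)$ is automatic from $c_1(\mathcal E)=\mathcal O_X(d-1)$ via the wedge pairing $\mathcal E\otimes\mathcal E\to\det\mathcal E$, and it is precisely the fact that this pairing is \emph{alternating} that forces $\varepsilon=-1$ and hence a pfaffian rather than a symmetric determinantal shape; it would strengthen your sketch to say this explicitly rather than leaving "tracing the (alternating) duality" implicit. Second, the skew-symmetrization of a matrix congruent to minus its transpose, compatibly with the minimal resolution, is exactly Beauville's Theorem B and its appendix in \cite{Beauvillehypersurface}, so citing it is legitimate and puts you on the same footing as the paper, which cites the entire statement. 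No gap beyond that deliberately deferred step.
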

  
\begin{proof}
See \cite{Casnati}, Lemma $3.1$.\\
\end{proof}

The following two results deal with the simplicity and stability properties of an Ulrich bundle.\\

\begin{proposition}\label{p2.34}
Let $(X,\mathcal O_X(1))$ be a smooth, polarized projective variety. Then the following are true :\

$(i)$ An Ulrich bundle $\mathcal E$ is both Gieseker-semistable and slope semistable. \

$(ii)$ If $\mathcal E$ is strictly semistable of rank $r \geq 2$, then it's destabilized by an Ulrich subbundle of strictly smaller rank.\

$(iii)$  For an Ulrich bundle Gieseker stability and slope stability are equivalent.
\end{proposition}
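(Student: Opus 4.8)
The plan is to deduce all three parts from the single structural fact that an Ulrich bundle is characterised by the triviality of its pushforward under a finite linear projection. Writing $n=\dim X$ and $d=\deg X$, fix a general linear projection $\pi\colon X\to\mathbb P^n$, which is finite of degree $d$; then a rank-$r$ bundle $\mathcal E$ is Ulrich if and only if $\pi_*\mathcal E\cong\mathcal O_{\mathbb P^n}^{\oplus rd}$. First I would record two transfer principles under $\pi$. Since $\pi$ is finite, hence affine, $\pi_*$ is exact and preserves Euler characteristics, so by the projection formula $P_{\pi_*\mathcal F}(t)=\chi(\mathcal F(t))=P_{\mathcal F}(t)$ for every coherent $\mathcal F$ on $X$; and a projection-formula computation of $c_1(\pi_*\mathcal F)\cdot H_{\mathbb P^n}^{n-1}$ gives $\mu_{\mathbb P^n}(\pi_*\mathcal F)=\tfrac1d\,\mu_H(\mathcal F)+\tfrac1d\deg(\pi_*\mathcal O_X)$, an increasing affine function of $\mu_H(\mathcal F)$. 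The trivial bundle $\mathcal O_{\mathbb P^n}^{\oplus rd}$ is both slope- and Gieseker-semistable, being a direct sum of copies of the stable sheaf $\mathcal O_{\mathbb P^n}$, all with the same reduced Hilbert polynomial $\binom{t+n}{n}$.

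For part $(i)$ I would test (semi)stability on an arbitrary subsheaf $\mathcal F\subseteq\mathcal E$. Applying $\pi_*$ gives $\pi_*\mathcal F\subseteq\pi_*\mathcal E=\mathcal O_{\mathbb P^n}^{\oplus rd}$, so semistability of the trivial bundle yields $p_{\pi_*\mathcal F}\le p_{\mathcal O^{rd}}$ and $\mu(\pi_*\mathcal F)\le 0$. By the two transfer principles these read $p_{\mathcal F}\le p_{\mathcal E}$ and $\mu_H(\mathcal F)\le\mu_H(\mathcal E)$, giving both Gieseker- and slope-semistability of $\mathcal E$.

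For part $(ii)$, strict semistability yields a proper saturated $\mathcal F\subset\mathcal E$ with $p_{\mathcal F}=p_{\mathcal E}$ (a step of a Jordan--H\"older filtration). Since $\mathcal Q=\mathcal E/\mathcal F$ is torsion-free, $\pi_*\mathcal Q$ is torsion-free, so $\pi_*\mathcal F$ is saturated in $\mathcal O^{rd}$ and, by the transfer principle, has reduced Hilbert polynomial $\binom{t+n}{n}$, equal to that of $\mathcal O^{rd}$. Thus $\pi_*\mathcal F$ is a Gieseker-semistable subobject of the polystable $\mathcal O^{rd}$ whose Jordan--H\"older factors are all $\mathcal O_{\mathbb P^n}$; as $\operatorname{Ext}^1(\mathcal O_{\mathbb P^n},\mathcal O_{\mathbb P^n})=H^1(\mathcal O_{\mathbb P^n})=0$, every such iterated extension splits and $\pi_*\mathcal F\cong\mathcal O_{\mathbb P^n}^{\oplus sd}$ with $s=\operatorname{rk}\mathcal F<r$. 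By the pushforward characterisation $\mathcal F$ is Ulrich; being a saturated subsheaf of a bundle with torsion-free quotient it is reflexive, hence locally free on the smooth surface, i.e. an Ulrich subbundle destabilising $\mathcal E$.

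For part $(iii)$, the implication slope-stable $\Rightarrow$ Gieseker-stable is the standard general one, so only the converse uses the Ulrich hypothesis. Suppose $\mathcal E$ is Gieseker-stable but not slope-stable; by $(i)$ there is a proper saturated $\mathcal F$ with $\mu_H(\mathcal F)=\mu_H(\mathcal E)$, and I would argue such an $\mathcal F$ is forced to be Ulrich, whence $p_{\mathcal F}=p_{\mathcal E}$, contradicting Gieseker-stability. The main obstacle is precisely this upgrade: slope-equality only matches the top two coefficients of the reduced Hilbert polynomials, and one must promote it to full equality. I would verify the numerical and ACM conditions of Proposition \ref{p2.30} for $\mathcal F$: the first-Chern-class condition is automatic because the Ulrich slope $\tfrac{(m+3)d}{2}$ is independent of the rank, while ACM-ness and the $c_2$-equality require controlling the intermediate and top cohomology of $\mathcal F$ and of $\mathcal Q=\mathcal E/\mathcal F$. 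Here the key point is that $\pi_*\mathcal F$ is not an arbitrary slope-$0$ subsheaf of $\mathcal O^{rd}$ but a $\pi_*\mathcal O_X$-submodule; combining this with the vanishing $H^0(\mathcal F(-j))\subseteq H^0(\mathcal E(-j))=0$ and its Serre-dual counterpart for $\mathcal Q$ should pin down the remaining cohomology and force $p_{\mathcal F}=p_{\mathcal E}$.
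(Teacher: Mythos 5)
The paper does not actually prove this proposition: parts $(i)$ and $(iii)$ are quoted from \cite{HartshorneUlrich}, Theorem $2.9$, and part $(ii)$ from \cite{Coskunternary}, Lemma $2.15$. Your pushforward strategy is essentially the mechanism used in those sources, and your parts $(i)$ and $(ii)$ are correct and complete in substance: the transfer identities $P_{\pi_*\mathcal F}=P_{\mathcal F}$ and $\mu(\pi_*\mathcal F)=\frac1d\mu_H(\mathcal F)+\frac1d\deg(\pi_*\mathcal O_X)$ are right, $\mathcal O_{\mathbb P^n}^{\oplus rd}$ is polystable in both senses, and in $(ii)$ the Jordan--H\"older argument inside the abelian category of semistable sheaves with fixed reduced Hilbert polynomial, together with $\text{Ext}^1(\mathcal O_{\mathbb P^n},\mathcal O_{\mathbb P^n})=0$, does force $\pi_*\mathcal F\cong\mathcal O_{\mathbb P^n}^{\oplus sd}$. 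Two repairs are needed even there: the equivalence ``$\mathcal E$ is Ulrich iff $\pi_*\mathcal E\cong\mathcal O_{\mathbb P^n}^{\oplus rd}$'' is a nontrivial input that must itself be proved or cited, and your closing step ``reflexive, hence locally free on the smooth surface'' silently assumes $n=2$ while the proposition is stated for arbitrary $X$; the correct general reason is that $\pi_*\mathcal F$ locally free with $\pi$ finite and flat makes $\mathcal F$ maximal Cohen--Macaulay over the regular local rings of $X$, hence locally free by Auslander--Buchsbaum.

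The genuine gap is in part $(iii)$, and you flag it yourself: to show that Gieseker stability implies slope stability you must upgrade a saturated $\mathcal F\subset\mathcal E$ with $\mu_H(\mathcal F)=\mu_H(\mathcal E)$ to one with $p_{\mathcal F}=p_{\mathcal E}$, and your last paragraph only sketches a plan (``should pin down the remaining cohomology'') without carrying it out. The difficulty is real, not cosmetic: pushing forward only tells you that $\pi_*\mathcal F$ is a saturated slope-$0$ subsheaf of $\mathcal O_{\mathbb P^n}^{\oplus rd}$, and slope equality, unlike equality of reduced Hilbert polynomials, gives no control in codimension $\ge 2$ --- which is exactly where $p_{\mathcal F}$ and $p_{\mathcal E}$ could still differ. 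Verifying the hypotheses of Proposition \ref{p2.30} for $\mathcal F$ is also not straightforward, since ACM-ness of $\mathcal F$ and the $c_2$ identity are precisely what is at stake. Two honest ways to close the gap: show that a reflexive, saturated, slope-$0$ subsheaf of $\mathcal O_{\mathbb P^n}^{\oplus N}$ restricts trivially to a general line and invoke a triviality criterion to conclude $\pi_*\mathcal F\cong\mathcal O^{\oplus sd}$; or follow the cited source and use the Ulrich dual $\mathcal E^\vee\otimes\omega_X(n+1)$, which is again Ulrich, to obtain the missing cohomology vanishings for the torsion-free quotient $\mathcal E/\mathcal F$. As written, part $(iii)$ should be regarded as unproved (or simply cited, as the paper does).
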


\begin{proof}
Statements $(i),(iii)$ follows from \cite{HartshorneUlrich}, Theorem $2.9(a),(c)$. For statement $(ii)$ see \cite{Coskunternary}, Lemma $2.15$.
\end{proof}

\begin{lemma}\label{simple}
On a nonsingular projective variety $X$, let
\begin{align*}
0 \to \mathcal F \to \mathcal E \to \mathcal G \to 0
\end{align*}
be a non-split extension of non-isomorphic $\mu_H$ stable vector bundles of same slope. Then $\mathcal E$ is a simple vector bundle.
\end{lemma}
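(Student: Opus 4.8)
The plan is to prove directly that $\operatorname{End}(\mathcal E)=\mathbb C$, i.e.\ that every $\phi\in\operatorname{Hom}(\mathcal E,\mathcal E)$ is a scalar multiple of the identity. The engine of the argument is a Schur-type vanishing which I would isolate first as a claim: \emph{if $\mathcal F,\mathcal G$ are $\mu_H$-stable bundles with $\mu_H(\mathcal F)=\mu_H(\mathcal G)$ and $\mathcal F\not\cong\mathcal G$, then $\operatorname{Hom}(\mathcal F,\mathcal G)=0=\operatorname{Hom}(\mathcal G,\mathcal F)$.} To see this I would take a nonzero $f\colon\mathcal F\to\mathcal G$, set $I=\operatorname{im}(f)$ (a quotient of $\mathcal F$ and a subsheaf of $\mathcal G$), and play the quotient form of stability of $\mathcal F$ against the subsheaf form of stability of $\mathcal G$. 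If $\operatorname{rank}(I)<\operatorname{rank}(\mathcal F)$, then the quotient form gives $\mu_H(I)>\mu_H(\mathcal F)=\mu_H(\mathcal G)$, which is incompatible with $I\subseteq\mathcal G$ (a proper-rank subsheaf forces $\mu_H(I)<\mu_H(\mathcal G)$, and a full-rank subsheaf forces $\mu_H(I)\le\mu_H(\mathcal G)$). Hence $f$ is injective, $I\cong\mathcal F$ has slope $\mu_H$, and stability of $\mathcal G$ rules out $\operatorname{rank}(I)<\operatorname{rank}(\mathcal G)$; so $I\subseteq\mathcal G$ is a full-rank subsheaf of the same slope.

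This last situation is the delicate point, and I expect it to be \textbf{the main obstacle}: $\deg_H(\mathcal G/I)=0$ does not by itself force $\mathcal G/I=0$, only that it is supported in codimension $\ge 2$. I would resolve it by dualizing $0\to I\to\mathcal G\to \mathcal G/I\to 0$ and using that $\mathcal Ext^{i}(\mathcal G/I,\mathcal O_X)=0$ for $i\le 1$ when $\mathcal G/I$ is supported in codimension $\ge 2$, so that $\mathcal G^{*}\cong I^{*}$ and hence $\mathcal G\cong\mathcal G^{**}\cong I^{**}\cong I\cong\mathcal F$ (here $I$ is reflexive, being isomorphic to the locally free sheaf $\mathcal F$). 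This contradicts $\mathcal F\not\cong\mathcal G$ and establishes the claim, the statement being symmetric in $\mathcal F$ and $\mathcal G$.

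With the claim available, let $\iota\colon\mathcal F\hookrightarrow\mathcal E$ and $\pi\colon\mathcal E\twoheadrightarrow\mathcal G$ be the maps of the sequence and take $\phi\in\operatorname{Hom}(\mathcal E,\mathcal E)$. The composite $\pi\circ\phi\circ\iota\colon\mathcal F\to\mathcal G$ lies in $\operatorname{Hom}(\mathcal F,\mathcal G)=0$, so $\phi(\mathcal F)\subseteq\ker\pi=\mathcal F$; thus $\phi$ restricts to an endomorphism of $\mathcal F$, which by stability (hence simplicity) of $\mathcal F$ is a scalar $\lambda$. Replacing $\phi$ by $\phi-\lambda\,\mathrm{id}_{\mathcal E}$, I may assume $\phi|_{\mathcal F}=0$, so $\phi$ factors as $\phi=g\circ\pi$ for some $g\colon\mathcal G\to\mathcal E$.

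It then remains to show $g=0$. I would consider $h:=\pi\circ g\colon\mathcal G\to\mathcal G$, which by simplicity of $\mathcal G$ equals $\nu\,\mathrm{id}_{\mathcal G}$ for some $\nu\in\mathbb C$. If $\nu\neq 0$ then $\nu^{-1}g$ is a section of $\pi$, splitting the extension and contradicting the non-split hypothesis; hence $\nu=0$ and $\operatorname{im}(g)\subseteq\ker\pi=\mathcal F$. Consequently $g$ factors through a map $\mathcal G\to\mathcal F$, which vanishes by the claim, so $g=0$ and $\phi=\lambda\,\mathrm{id}_{\mathcal E}$. This proves $\operatorname{End}(\mathcal E)=\mathbb C$, i.e.\ that $\mathcal E$ is simple. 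The one genuinely technical ingredient, on which the whole plan turns, is the reflexivity/double-dual step used to upgrade the equal-slope, full-rank inclusion $I\subseteq\mathcal G$ to an isomorphism; the rest is a formal diagram chase using simplicity of $\mathcal F$ and $\mathcal G$ and the non-split hypothesis.
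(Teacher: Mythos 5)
Your proof is correct and is essentially the standard argument: the paper itself gives no proof but simply cites \cite{HartshorneUlrich}, Lemma 4.2, and the argument there is the same Schur-type vanishing $\operatorname{Hom}(\mathcal F,\mathcal G)=\operatorname{Hom}(\mathcal G,\mathcal F)=0$ followed by the diagram chase you describe. Your careful handling of the full-rank, equal-slope subsheaf via reflexivity is a correct way to close the one genuinely delicate step.
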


\begin{proof}
See \cite{HartshorneUlrich}, Lemma $4.2$.\\
\end{proof}

The following Proposition is useful in computing dimensions of families of extensions.\\

\begin{proposition}\label{euler}
Let $X$ be a nonsingular projective surface of degree $d$. Let $\mathcal F_1, \mathcal F_2$ be two Ulrich bundles with rank $r_1,r_2$ respectively and first Chern classes $c_1, c_2$ respectively. Then
\begin{align*}
\chi(\mathcal F_1 \otimes \mathcal F^*_2) = r_1.c_2.\omega_X -c_1.c_2 + r_1.r_2(2d-1-p_a)
\end{align*}
where $p_a$ is the arithmetic genus of $X$.
\end{proposition}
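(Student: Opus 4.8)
The plan is to expand $\chi(\mathcal F_1\otimes\mathcal F_2^*)$ by Hirzebruch--Riemann--Roch on the surface and then collapse the resulting expression using the numerical constraints that Ulrich bundles satisfy (Proposition~\ref{p2.30}). Throughout I use that a smooth surface $X$ of degree $d$ in $\mathbb P^3$ is arithmetically Gorenstein, with $\omega_X = mH$ for $m = d-4$ by adjunction and $H^2 = d$; being ACM it has $q = h^1(\mathcal O_X) = 0$, so that $\chi(\mathcal O_X) = 1 + p_a$ and $h^0(\omega_X) = p_g = p_a$. Write $L_i = c_1(\mathcal F_i)$ and $s_i = c_2(\mathcal F_i)$ for $i = 1,2$; in the notation of the statement $c_1 = L_1$ and $c_2 = L_2$.

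First I would record the Chern data of $\mathcal G := \mathcal F_1\otimes\mathcal F_2^*$, a bundle of rank $r_1 r_2$. Multiplicativity of the Chern character gives $c_1(\mathcal G) = r_2 L_1 - r_1 L_2$ and $\mathrm{ch}_2(\mathcal G) = r_2\bigl(\tfrac12 L_1^2 - s_1\bigr) - L_1\cdot L_2 + r_1\bigl(\tfrac12 L_2^2 - s_2\bigr)$, using $c_1(\mathcal F_2^*) = -L_2$ and $c_2(\mathcal F_2^*) = s_2$. The Riemann--Roch formula for a rank $r$ bundle on $X$, the rank $r$ analogue of the rank $2$ identity recorded earlier, reads $\chi(\mathcal G) = \mathrm{ch}_2(\mathcal G) - \tfrac12\,\omega_X\cdot c_1(\mathcal G) + r_1 r_2\,\chi(\mathcal O_X)$, where I used $\mathrm{ch}_2 = \tfrac12 c_1^2 - c_2$.

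The heart of the computation is to feed in Proposition~\ref{p2.30}. Condition~(iii) applied to each $\mathcal F_i$ gives $s_i = \tfrac12 L_i^2 - d r_i\tfrac{m^2+3m+4}{4} + r_i(1+p_a)$, so the four quadratic terms combine as $r_2(\tfrac12 L_1^2 - s_1) + r_1(\tfrac12 L_2^2 - s_2) = r_1 r_2 d\,\tfrac{m^2+3m+4}{2} - 2 r_1 r_2(1+p_a)$, in which the $L_i^2$ cancel. Condition~(ii), namely $L_i\cdot H = \tfrac{m+3}{2} d r_i$, together with $\omega_X = mH$, evaluates the intersection terms: $\omega_X\cdot L_i = m\,(L_i\cdot H) = \tfrac{m(m+3)}{2} d r_i$. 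Substituting everything into the Riemann--Roch expression and using $\chi(\mathcal O_X) = 1+p_a$, the whole $m$-dependence collapses through the identity $(m^2+3m+4) - m(m+3) = 4$, leaving $\chi(\mathcal G) = -\,L_1\cdot L_2 + r_1 r_2 d\cdot\tfrac{m^2+3m+4}{2} - r_1 r_2(1+p_a)$. Finally I rewrite the purely numerical part: since $r_1\,\omega_X\cdot L_2 = \tfrac{r_1 r_2 d\,m(m+3)}{2}$ by condition~(ii), the same identity gives $r_1 r_2 d\,\tfrac{m^2+3m+4}{2} - r_1 r_2(1+p_a) = r_1(\omega_X\cdot L_2) + r_1 r_2(2d-1-p_a)$, which recasts the answer as $\chi(\mathcal F_1\otimes\mathcal F_2^*) = r_1\,c_2\cdot\omega_X - c_1\cdot c_2 + r_1 r_2(2d-1-p_a)$, as claimed.

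I expect the only real obstacle to be bookkeeping: tracking the signs in the Chern character of the dual and of the tensor product, and making sure the $L_i^2$ terms and the $m$-dependent terms cancel exactly. There is no conceptual difficulty, but one must check that Proposition~\ref{p2.30} applies legitimately, i.e.\ that $X$ is genuinely AG with the stated $m$ and that $p_a = h^0(\omega_X)$. The mild asymmetry of the final formula (only $c_2 = c_1(\mathcal F_2)$ appears paired with $\omega_X$) is purely an artifact of choosing to re-express one numerical term via condition~(ii); a symmetric form is equally available.
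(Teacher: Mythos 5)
Your computation is correct: the Chern character of $\mathcal F_1\otimes\mathcal F_2^*$, the surface Riemann--Roch formula, and the substitution of conditions (ii) and (iii) of Proposition~\ref{p2.30} all check out, and the final rewriting via $(m^2+3m+4)-m(m+3)=4$ does recover the stated formula. The paper itself offers no proof of this proposition --- it only cites Proposition~2.12 of \cite{HartshorneUlrich} --- so there is nothing internal to compare against; your direct Hirzebruch--Riemann--Roch verification is the standard argument and is in the spirit of the cited source. Two small remarks. First, the term $-\tfrac12\,\omega_X\cdot c_1(\mathcal G)=-\tfrac12\bigl(r_2\,\omega_X\cdot L_1-r_1\,\omega_X\cdot L_2\bigr)$ vanishes outright because condition (ii) forces $r_2\,\omega_X\cdot L_1=r_1\,\omega_X\cdot L_2$; the identity $(m^2+3m+4)-m(m+3)=4$ is only needed in your final re-expression of the constant, not at the intermediate stage where you invoke it. Second, your computation never actually uses $m=d-4$; it goes through verbatim for any AG surface with $\omega_X=mH$ and $H^2=d$, so the adjunction step is dispensable and the result is slightly more general than you claim to need.
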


\begin{proof}
See \cite{HartshorneUlrich}, Proposition $2.12$.\\
\end{proof}

\begin{definition}(Special Ulrich bundle)
We say that a rank $2$ Ulrich bundle $\mathcal E$ on a smooth projective variety $X$ of $\text{dim}(X) =n$ is special if $\text{det}(\mathcal E) \cong \omega_X(n+1)$.\\
\end{definition}
 
The following result deals with the construction of a rank $2$ special Ulrich bundles on a smooth projective surface. $H$ here denotes the hyperplane divisor.\\

\begin{proposition}\label{p2.36}
$(a)$ Let $C$ be a smooth curve on $X$ of class $3H +\omega_X$. Let $L$ be a line bundle on $C$ with $\text{deg}(L)= \frac{1}{2}H. (5H +3\omega_X) + 2. \chi(\mathcal O_X)$. If $\sigma_0,\sigma_1 \in H^0(L)$ define a base-point free pencil and $H^1(L(H+\omega_X)) =0$, then the bundle $\mathcal E$ defined by the \textit{Mukai exact sequence}
\begin{center}
$ 0 \to \mathcal E^* \to \mathcal O^2_X \xrightarrow{(\sigma_0,\sigma_1)} L \to 0$
\end{center} 
is a special rank $2$ Ulrich bundle.\

$(b)$ Every special rank $2$ Ulrich bundle on $X$ can be obtained from a Mukai sequence as in part $(a)$.
\end{proposition}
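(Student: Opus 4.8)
The plan is to prove (a) by reading off $c_1(\mathcal E)$ directly from the Mukai sequence and then checking the Ulrich vanishings cohomologically, exploiting a self-duality forced by specialness, and to prove (b) by reversing this construction through the Serre correspondence. For (a), base-point-freeness of the pencil $\langle\sigma_0,\sigma_1\rangle$ makes the evaluation $\mathcal O_X^{2}\to i_*L$ surjective, so Proposition \ref{P2} applies with $\rho=1$: the kernel $\mathcal E^*$ is a rank $2$ bundle and $\det\mathcal E^*=\mathcal O_X(-C)=\mathcal O_X(-(3H+\omega_X))$. Hence $\det\mathcal E\cong\omega_X(3)$, which is exactly the special condition for $n=2$. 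Dualizing the sequence and applying relative duality for the divisorial embedding $i$ (so that $\mathcal Ext^1_{\mathcal O_X}(i_*L,\mathcal O_X)\cong i_*(L^{-1}\otimes N_{C/X})$) also produces the companion sequence $0\to\mathcal O_X^{2}\to\mathcal E\to i_*M\to0$ with $M=L^{-1}\otimes N_{C/X}$, kept for later use.

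To verify that $\mathcal E$ is Ulrich it suffices, on a surface, to prove $H^i(\mathcal E(-1))=H^i(\mathcal E(-2))=0$ for all $i$. The key observation is that specialness gives $\mathcal E^*\cong\mathcal E\otimes\mathcal O_X(-(3H+\omega_X))$, so Serre duality yields the symmetry $H^i(\mathcal E(-1))^{\vee}\cong H^{2-i}(\mathcal E(-2))$; and a Riemann--Roch computation carried out through the twisted sequence $0\to\mathcal E(-1)\to\mathcal O_X(2H+\omega_X)^{2}\to i_*(L(2H+\omega_X))\to0$ shows that the prescribed degree $\deg L=\tfrac{1}{2}H\cdot(5H+3\omega_X)+2\chi(\mathcal O_X)$ is exactly equivalent to $\chi(\mathcal E(-1))=\chi(\mathcal E(-2))=0$. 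Consequently it is enough to establish the two edge vanishings $H^0(\mathcal E(-1))=0$ and $H^0(\mathcal E(-2))=0$: the symmetry then upgrades these to $H^2(\mathcal E(-2))=H^2(\mathcal E(-1))=0$, and $\chi=0$ forces the middle groups to vanish.

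For the edge vanishings, the above twist identifies $H^0(\mathcal E(-1))$ with the kernel of $H^0(\mathcal O_X(2H+\omega_X))^{2}\to H^0(C,L(2H+\omega_X))$; since the kernel of restriction to $C$ is $H^0(\mathcal O_X(2H+\omega_X-C))=H^0(\mathcal O_X(-H))=0$, restriction is injective, and the base-point-free pencil sequence $0\to L^{-1}\to\mathcal O_C^{2}\to L\to0$ places this kernel inside $H^0(C,L^{-1}(2H+\omega_X))$. Using $\omega_C=\mathcal O_C(3H+2\omega_X)$, Serre duality on $C$ identifies this last group with $H^1(C,L(H+\omega_X))^{\vee}$, which is zero by hypothesis; hence $H^0(\mathcal E(-1))=0$. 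Then tensoring the hyperplane-section sequence $0\to\mathcal O_X(-1)\to\mathcal O_X\to\mathcal O_{H_0}\to0$ with the locally free sheaf $\mathcal E(-1)$ embeds $\mathcal E(-2)\hookrightarrow\mathcal E(-1)$, so $H^0(\mathcal E(-2))\hookrightarrow H^0(\mathcal E(-1))=0$. This finishes (a), and notably uses nothing about $X$ beyond ampleness of $H$.

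For (b), an Ulrich bundle is $0$-regular, hence globally generated, so $\det\mathcal E=\omega_X(3)$ is base-point-free; a general pair $s_0,s_1\in H^0(\mathcal E)$ gives $0\to\mathcal O_X^{2}\to\mathcal E\to i_*M\to0$ whose degeneracy locus $C=Z(s_0\wedge s_1)\in|3H+\omega_X|$ is smooth by Bertini and carries a line bundle $M$. Dualizing returns a Mukai sequence with $L=M^{-1}\otimes N_{C/X}$ and a base-point-free pencil; running the Riemann--Roch computation of the second paragraph backwards from $\chi(\mathcal E(-1))=0$ fixes $\deg L$ to the stated value, and $H^1(L(H+\omega_X))=0$ is extracted from $H^2(\mathcal E(-2))=0$ in the long exact sequence together with $H^1(\mathcal O_X(-1))=0$ (legitimate here, since the surfaces considered satisfy $H^1(\mathcal O_X(m))=0$ for all $m$). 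I expect the main obstacle to be the cohomological bookkeeping in the third paragraph, namely recognizing that after the Serre-duality reduction the single hypothesis $H^1(L(H+\omega_X))=0$ is precisely the vanishing needed, with the remaining edge case collapsing to it via a hyperplane section; in (b) the delicate point is instead ensuring the recovered data genuinely satisfy the hypotheses of (a) — smoothness of $C$ by Bertini and the curve-level vanishing $H^1(L(H+\omega_X))=0$ — the latter being where projective normality of $X$ enters.
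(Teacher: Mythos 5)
The paper offers no proof of this proposition --- it is quoted from \cite{Eisenbud}, Proposition $6.2$, with the proof deferred to that reference --- and your argument is essentially a correct reconstruction of the argument given there: read off $\det\mathcal E \cong \omega_X(3)$ from the elementary transformation, use $\mathcal E^*\cong\mathcal E\otimes\det(\mathcal E)^{-1}$ together with Serre duality to get the symmetry $H^i(\mathcal E(-1))^{\vee}\cong H^{2-i}(\mathcal E(-2))$, check that the prescribed degree of $L$ is exactly $\chi(\mathcal E(-1))=0$ (your Riemann--Roch computation is correct), and reduce everything to the single vanishing $H^0(\mathcal E(-1))=0$, which the hypothesis $H^1(L(H+\omega_X))=0$ delivers through the pencil sequence $0\to L^{-1}\to\mathcal O_C^{2}\to L\to 0$ and Serre duality on $C$ with $\omega_C=\mathcal O_C(3H+2\omega_X)$. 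Two points deserve attention before this can stand as a complete proof. First, your opening reduction, that on a surface Ulrich is equivalent to $H^{\bullet}(\mathcal E(-1))=H^{\bullet}(\mathcal E(-2))=0$, is true but is not the definition adopted in this paper (Definition \ref{d25} demands $H^1(\mathcal E(-m))=0$ for \emph{all} $m\in\mathbb Z$); passing from the vanishing at the two twists to full ACM-ness is a genuine theorem (finite linear projection to $\mathbb P^2$ plus a Horrocks/Beilinson-type criterion, i.e.\ \cite{Eisenbud}, Proposition $2.1$) and should be cited or proved rather than asserted. Second, in part $(b)$ you do not actually need to restrict to surfaces with $H^1(\mathcal O_X(m))=0$ for all $m$: the group to be killed is $H^1(\mathcal O_X(H+\omega_X))\cong H^1(\mathcal O_X(-H))^{\vee}$, which vanishes for every smooth polarized surface in characteristic zero by Kodaira vanishing, so the statement holds in the generality in which it is asserted. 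With these two references supplied, your proof is correct and matches the one the paper relies on.
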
 

\begin{proof}
See \cite{Eisenbud}, Proposition $6.2$.
\end{proof}

\section{Construction of the Brill-Noether Loci and the Petri map on higher dimensional varieties}\label{S2}

\subsection{Brill-Noether Loci}

Let $X$ be a smooth, projective, irreducible  variety of dimension $n \geq 2$ over $\mathbb C$  and $H$ be an ample divisor on $X$. Let $\mathcal M_{H} : = \mathcal M_{X,H}(r;c_1,...,c_s)$ be the moduli space of rank $r \geq 2$, $\mu_H$ stable vector bundles $\mathcal E$ on $X$ with fixed Chern classes $c_i(\mathcal E) = c_i \in H^{2i}(X,\mathbb Z)$ for $i =1,...,s : =\text{min}\{r,n\}$. The main goal of this section is to briefly recall the construction of a subvariety $\mathcal W^k_{r,H}(c_1,...c_s)$(following  \cite{LC1},\cite{KS}) of $\mathcal M_H$ such that the support of the variety is $\text{Supp}(\mathcal W^k_{r,H}(c_1,...c_s)) =\{\mathcal E \in \mathcal M_H | h^0(\mathcal E) \geq k+1\}$.\\

\begin{definition}(Determinantal Variety)
Given $m$ and $n$ and $r < \text{min}(m,n)$, the determinantal variety $Y_r$ is the set of all $m\times n$ matrices (over a field $\mathbb K$)  with rank $\leq r$. This is naturally an algebraic variety as the condition that a matrix have rank $\leq r$ is given by the vanishing of all of its $(r+1)\times(r+1)$ minors. Since the equations defining minors are homogeneous, one can consider $Y_r$ as either an affine variety in $mn$ dimensional affine space or as a projective variety in $mn-1$ dimensional projective space.\\
\end{definition}

For results on determinantal variety see \cite{HarrisAG} .\\

We only outline a rough sketch of the construction of Brill-Noether loci as we do not use the construction directly in the rest of the paper. For an explanation as to why the cohomological assumptions of the following theorem are natural see \cite{LC1}, Remark $2.4. (1)$.\\

\begin{theorem}\label{T4}
With the notations introduced as above, assume for all $\mathcal E \in \mathcal M_H$, $H^i(X,\mathcal E) = 0$ for $i \geq 2$. Then for any $k \geq -1$, there exists a determinantal variety $\mathcal W^k_{r,H}(c_1,...c_s) \subset \mathcal M_H$ such that 
\begin{center}
$\text{Support}(\mathcal W^k_{r,H}(c_1,...c_s)) =\{\mathcal E \in \mathcal M_H | h^0(\mathcal E) \geq k+1\}$
\end{center}
Moreover, each non-empty irreducible component of $\mathcal W^k_{r,H}(c_1,...c_s)$ has dimension atleast
\begin{center}
$\text{dim}(\mathcal M_H)-(k+1)(k+1-\chi(r;c_1,...c_s))$
\end{center}
where, $\chi(r;c_1,...c_s)) = \chi(\mathcal E_t)$, for any $t \in \mathcal M_H$ and 
\begin{center}
$\mathcal W^{k+1}_{r,H}(c_1,...c_s) \subset \text{Sing}(\mathcal W^k_{r,H}(c_1,...c_s))$
\end{center}
whenever $\mathcal W^k_{r,H}(c_1,...c_s) \neq \mathcal M_H$.
\end{theorem}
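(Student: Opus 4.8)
The plan is to reproduce the determinantal construction of \cite{LC1} in a form that makes the three assertions transparent, treating the present higher-dimensional situation exactly like the classical case of curves once the hypothesis $H^i(X,\mathcal E)=0$ for $i\geq 2$ is invoked. First I would choose a parameter scheme $T$ (a suitable open subscheme of a Quot scheme) carrying a universal sheaf $\mathcal E$ on $X\times T$, together with the natural smooth surjection $T\to\mathcal M_H$ onto the stable locus, and consider the projection $\pi\colon X\times T\to T$. Since $\mathcal M_H$ need not be fine, everything below is first carried out on $T$ and then descended; the loci produced will be intrinsic because they depend only on the rank stratification of $R\pi_*\mathcal E$, which is insensitive to the auxiliary choices. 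By cohomology and base change, $R\pi_*\mathcal E$ is represented by a bounded complex of vector bundles whose formation commutes with base change, and the vanishing $H^i=0$ for $i\geq 2$ is exactly what lets this complex be replaced by a two-term complex $\phi\colon K^0\to K^1$ of vector bundles in degrees $0,1$, with $h^0(\mathcal E_t)=\dim\ker\phi_t$ and $h^1(\mathcal E_t)=\dim\text{coker}\,\phi_t$ for every $t$.

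Then $\mathcal W^k_{r,H}$ is defined as the locus where $\text{rank}\,\phi_t\leq r_0-(k+1)$, where $r_0=\text{rank}\,K^0$; scheme-theoretically this is the determinantal scheme cut out by the $(r_0-k)\times(r_0-k)$ minors of a local matrix for $\phi$. By construction its support is $\{\,\mathcal E_t:\ h^0(\mathcal E_t)=r_0-\text{rank}\,\phi_t\geq k+1\,\}$, which yields the first assertion. For the dimension bound I would invoke the standard estimate for determinantal varieties (cf. \cite{HarrisAG}): if nonempty, every component of the rank-$\leq s$ locus of a map $K^0\to K^1$ has codimension at most $(r_0-s)(r_1-s)$, with $r_1=\text{rank}\,K^1$. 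Taking $s=r_0-(k+1)$ gives $r_0-s=k+1$, and using $\chi(\mathcal E_t)=h^0-h^1=r_0-r_1$ (again the hypothesis $H^i=0$, $i\geq2$, is used) gives $r_1-s=(k+1)-\chi$. Hence each component has codimension at most $(k+1)\bigl((k+1)-\chi\bigr)$, i.e. dimension at least $\text{dim}\,\mathcal M_H-(k+1)\bigl((k+1)-\chi\bigr)$.

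For the statement on singularities I would argue directly with the Jacobian criterion rather than appealing to transversality. Locally $\mathcal W^k_{r,H}$ is cut out by the $(r_0-k)$-minors of a matrix representing $\phi$, and the partial derivatives of such a minor are, up to sign, $(r_0-k-1)$-minors of the same matrix. At any point of $\mathcal W^{k+1}_{r,H}$ the rank of $\phi_t$ is at most $r_0-(k+2)$, so every $(r_0-k-1)$-minor vanishes there; consequently the full Jacobian of the defining equations of $\mathcal W^k_{r,H}$ vanishes along $\mathcal W^{k+1}_{r,H}$. Provided $\mathcal W^k_{r,H}\neq\mathcal M_H$, so that $\mathcal W^k_{r,H}$ is a proper subscheme of positive codimension, the Jacobian criterion then forces every point of $\mathcal W^{k+1}_{r,H}$ to be singular on $\mathcal W^k_{r,H}$, which is the desired inclusion.

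The main obstacle I anticipate is not the determinantal formalism, which is routine once set up, but the descent and well-definedness in the absence of a global universal family: one must check that the two-term complex exists with the base-change property on the parameter scheme, and that the resulting determinantal scheme structure is independent of the chosen presentation of $R\pi_*\mathcal E$ (two presentations differ by adding trivial acyclic complexes, which leave the rank loci and their scheme structure unchanged), so that it glues to a genuine subscheme of $\mathcal M_H$. Verifying compatibility with \'etale base change, and hence descent along $T\to\mathcal M_H$, is the technical heart of the argument; the support, dimension, and singularity statements are then formal consequences of the theory of generic determinantal loci.
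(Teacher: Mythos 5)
Your proposal follows essentially the same route as the paper: both reduce to a two-term complex of vector bundles computing the pushforward fibrewise (the paper produces it explicitly as $(\pi_2)_*\mathcal U(\mathcal D)\to(\pi_2)_*(\mathcal U(\mathcal D)/\mathcal U)$ for a sufficiently positive divisor $D$, you by abstract truncation of $R\pi_*\mathcal E$ using $H^i=0$ for $i\geq 2$) and then take the determinantal degeneracy loci, with the dimension bound and the inclusion $\mathcal W^{k+1}_{r,H}\subset \text{Sing}(\mathcal W^k_{r,H})$ following from standard determinantal-variety theory. If anything your sketch is slightly more explicit than the paper's, which only verifies the support identification in detail and treats the non-fine case by local construction and gluing rather than descent from a Quot scheme.
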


\begin{proof}
We first consider the case when $\mathcal M_H$ is a fine moduli space. Let $\mathcal U \to X \times \mathcal M_H$ be a universal family such that for any $t \in \mathcal M_H$, $\mathcal E_t := \mathcal U|_{X\times\{t\}} $ is a $\mu_H$ stable, rank $r$ vector bundle on $X$ with $c_i(\mathcal E_t) =c_i$. Let's  choose an effective divisor $D$ on $X$ such that for any $t \in \mathcal M_H$ we have, $h^0(\mathcal E_t(D)) = \chi(\mathcal E_t(D))$ and $H^i(\mathcal E_t(D)) =0 $ for all $i \geq 1$. Let $\mathcal D := D \times \mathcal M_H$ be the product divisor on $X \times \mathcal M_H$ and let $\pi_2 : X \times \mathcal M_H \to \mathcal M_H$ be the natural projection. From cohomological conditions on $\mathcal E_t(D)$ and the base change theorem it can be seen that, $(\pi_{2})_{*}(\mathcal U(\mathcal D))$ is a vector bundle of rank $\chi(\mathcal E_t(D))$ on $\mathcal M_H$ and $R^i(\pi_{2})_{*}(\mathcal U(\mathcal D)) =0, \forall i >0$. We have a short exact sequence on the product space $X \times \mathcal M_H$ as
\begin{center}
$0 \to \mathcal U \to \mathcal U(\mathcal D) \to \mathcal U(\mathcal D)/\mathcal U \to 0$
\end{center}
Which induces an exact sequence on $\mathcal M_H$ as follows :
\begin{center}
$0 \to (\pi_2)_{*}\mathcal U \to (\pi_2)_{*}\mathcal U(\mathcal D) \xrightarrow{\gamma} (\pi_2)_{*}\mathcal U(\mathcal D)/\mathcal U \to R^1(\pi_2)_{*}\mathcal U \to 0$
\end{center}
Here $\gamma$ is a map between two vecror bundles of rank $\chi(\mathcal E_t(D))$ and $\chi(\mathcal E_t(D)) - \chi(\mathcal E_t)$. For integer $k \geq -1$, let $\mathcal W^K_{r,H}(c_1,..c_s)$ be the $\chi(\mathcal E_t(D)) -(k+1)$ th determinantal variety (corresponding to the map $\gamma$). For any $\mathcal E_t \in \mathcal M_H$, the assumption $h^i(\mathcal E_t) =0, \forall i \geq 2$ implies $h^1(\mathcal E_t) = h^0(\mathcal E_t) - \chi(\mathcal E_t)$. Then we have
\begin{align*}
\text{Supp}(\mathcal W^K_{r,H}(c_1,...,c_s)) &= \{\mathcal E \in \mathcal M_H | h^{1}(\mathcal E) \geq (k+1) -\chi(\mathcal E)\}\\
&=\{\mathcal E \in \mathcal M_H| h^0(\mathcal E) \geq k+1\}.
\end{align*}
If $\mathcal M_H$ is not a fine moduli space then first the construction is carried out locally (the independence of choice of the local universal sheaf can be shown) and then the local construction can be glued in order to get the global object.\\
\end{proof} 

\begin{remark}
 The variety $\mathcal W^{k}_{r,H}(c_1,...c_s)$ is called the \textit{k'th Brill-Noether locus} of the moduli space $\mathcal M_H$ and the number $\rho^k_{r,X,H} := \text{dim}(\mathcal M_H)-(k+1)(k+1-\chi(r;c_1,...c_s))$ is called the \textit{generalized Brill-Noether number} and it's also called the expected dimension of  $\mathcal W^{k}_{r,H}(c_1,...c_s)$. From the preceding theorem we see that under non-emptiness assumption every irreducible component of $\mathcal W^{k}_{r,H}(c_1,...c_s)$ and hence $\mathcal W^{k}_{r,H}(c_1,...c_s)$ has dimension atleast $\rho^k_{r,X,H}$. When the context is clear we will just write $\mathcal W^k_r$ , $\rho^k_r$ instead of  $\mathcal W^{k}_{r,H}(c_1,...c_s)$ and $\rho^k_{r,X,H}$.
\end{remark}

\subsection{Petri map for higher dimensional varieties}

Here we recall the definition of Petri map for higher dimensional varieties following \cite{KS}.\\

\begin{definition}\label{petri}
Let $X$ be a smooth, projective, irreducible variety of dimension $n$ and $\mathcal E$ be a vector bundle on $X$. Let's consider two coherent systems on $X$ given by  $\Lambda = \Lambda^{'} =(H^0(X,\mathcal E), i_d, \mathcal E)$. Then from Corollary \ref{C1}, we have a map: \
\begin{center}
$Ext^1(\mathcal E, \mathcal E) \to Hom( H^0(X, \mathcal E), H^1(X, \mathcal E))$
\end{center} 
Using Serre duality, we have the corresponding dual map
\begin{equation}\label{Petri}
\mu : H^0(X,\mathcal E)\otimes H^{n-1}(X, \omega_X \otimes \mathcal E^*) \to H^{n-1}(\omega_X \otimes \mathcal E \otimes \mathcal E^{*})
\end{equation}
We call this cup product map as the \textit{Petri map}.\\
\end{definition}

\begin{remark}
Let $n=2$ and $\mathcal E \in \mathcal M_H$ be a smooth point of the moduli space such that $\mathcal E \in \mathcal W^k_r -\mathcal W^{k+1}_r$(i.e it has exactly $k+1$ sections). Let $T$ be the tangent space of $\mathcal M_H$ at the point $\mathcal E$. Then $T = \text{Im}(\mu)^{\perp}$ and we have,
\begin{center}
$\text{dim}(T) = \text{dim}(\mathcal M_H)-h^0(X,\mathcal E).h^1(X,\mathcal E) + \text{dim ker}(\mu)$.
\end{center}
If $\mu$ is injective, then $\mathcal E$ is a smooth point of $\mathcal W^k_r$ and the component containing it has the expected dimension. More precisely, one has from Corollary \ref{C1} and Theorem \ref{T2} that $\mathcal E$ is a smooth point in  $\mathcal W^k_r$ if and only if the Petri map($\mu$) is injective. One can also define the \textit{Petri map} for higher dimensional varieties along the lines of \cite{alternativepetri}, section $2$.

\end{remark}

\section{Non-emptiness of Brill-Noether loci $\mathcal W^1_2 -\mathcal W^2_2$}\label{S3}

From now on, we concentrate on the case when $X$ is a very general sextic hypersurface in $\mathbb P^3$. Let $H: = c_1(\mathcal O_X(1))$. Consider $\mathcal M(c_2) := \mathcal M_{X,H}(2;5H,c_2)$ to be the moduli space of rank $2$, $\mu_H$-stable vector bundles $\mathcal E$ on $X$ with $c_1(\mathcal E) = 5H$ and $c_2(\mathcal E) = c_2$. Note that in this situation, we have $\chi(\mathcal O_X) = 11$ and hence the expected dimension of the moduli space $\mathcal M(c_2)$ is $4c_2 - c^2_1 - 3\chi(\mathcal O_X) = 4c_2 - 150 -3.11 = 4c_2 -183$.\\

 Recently in \cite{S}, it's shown that $\mathcal M(2;H,c_2)$
 is good (i.e generically smooth of the  expected dimension) for $c_2 \geq 20$ and  irreducible  for $c_2 \geq 27$. Also from \cite{Simpson18} it's known that $\mathcal M(2;H,11)$ is reducible with atleast $2$ irreducible components (which is the first example of its kind). In a recent work of the present author with S.Pal, it's established that $\mathcal M(2;H,c_2)$ is irreducible for $c_2 \leq 10$ with the space being non-empty for $c_2 \geq 5, c_2 \neq 7$ (cf.\cite{DS}).\\

Let $\mathcal E \in \mathcal M(c_2)$. Then we have
\begin{equation}\label{e3}
 \mathcal E^* \otimes \mathcal O_X(5) \cong \mathcal E. 
\end{equation} 
 \eqref{e3} with Serre duality gives us the following :
\begin{align*}
H^2(X, \mathcal E) \cong & (H^0(X, \mathcal E^*\otimes \mathcal O_X(2)))^*\\
\cong & (H^0(X, \mathcal E\otimes \mathcal O_X(-3)))^*
\end{align*}
Since $\mathcal E$ is $\mu_H$ stable, it is $\mu_H$ semistable (see Lemma $1.2.13$, \cite{Huy}). Since any line bundle is  $\mu_H$ semistable (see \cite{Huy}) and tensor product of any two $\mu_H$ semistable bundles on a smooth projective variety is again $\mu_H$ semistable,  we have $\mathcal E(-3)$ is $\mu_H$ semistable. Let if possible,  $h^0(\mathcal E(-3)) \neq 0$. Then corresponding to a non-zero section of $H^0(\mathcal E(-3))$, one has an injective map $\mathcal O_X \hookrightarrow \mathcal E(-3)$. We have the corresponding slopes given by 
\begin{align*}
\mu_H(\mathcal O_X) = \frac{\text{deg}_H(\mathcal O_X)}{r(\mathcal O_X)}
 = \frac{c_1(\mathcal O_X).H}{1} = 0
\end{align*}
and 
\begin{align*}
\mu_H(\mathcal E(-3)) = \frac{\text{deg}_H(\mathcal E(-3))}{r(\mathcal E(-3))}
 = \frac{c_1(\mathcal E(-3)).H}{2} = -3.
\end{align*}
This is a contradiction to the $\mu_H$ semistability of $\mathcal E(-3)$, which forces $h^0(\mathcal E(-3)) = 0$ and hence $H^2(X,\mathcal E) =0$. Thus Theorem \ref{T4} allows us to talk about the Brill-Noether locus (as constructed in that theorem) in $\mathcal M(c_2)$.\\

\begin{remark}\label{r4.1}
Here we note that, instead if $\mathcal E$ is $\mu_H$ semistable vector bundle of rank $2$ on $X$ with $c_1(\mathcal E) = 5H$, $c_2(\mathcal E) =c_2$, then  $\mathcal E$ is  $\mu_H$ stable. Therefore, all four notions of semistability, stability coincide in this case.\\
\end{remark}

Let's consider the complete linear system $\mathcal L_5 :=|5H|$ (for definition and basic results on linear system see \cite{RH}). Let $C \in \mathcal L_5$ be a smooth, irreducible, projective, algebraic curve and $i : C \hookrightarrow X$ be the inclusion map.  From adjunction formula (see \cite{RH}, Chapter-$V$, Proposition $1.5$), we have the genus of $C$ :
\begin{center}
$g(C) = \frac{1}{2}C.(C + \omega_X) +1 = 106$\\
\end{center}

Now in the pursuit of studying the non-emptiness question we systematically arrange our investigation in somewhat reverse direction. In what follows we  first ask the following question : \textit{Assume the existence of a base-point free line bundle $L$  on the curve $C$ (as chosen in the previous paragraph) with $h^0(C,L) = 2$, then under what condition on it's degree $\text{deg}(L)$ we have $\mathcal W^1_2 \subset \mathcal M(c_2)$ is non-empty for $c_2 = \text{deg}(L)$ ?}\\

Since $L$ is base point free, it's globally generated and hence its pushforward $i_*L$ on $X$ is also globally generated. This means the canonical evaluation map $H^0(C,L) \otimes  \mathcal O_X \to i_*L$ is surjective, naturally we look at the following elementary transformation:
\begin{equation}\label{e5}
0 \to \mathcal K \to H^0(C,L) \otimes  \mathcal O_X \to i_*L \to 0
\end{equation}
Where, $\mathcal K$ is the kernel of the evaluation map. From Proposition \ref{P2} it follows that $\mathcal K$ is a vector bundle. 
Also note that, $\text{rank}(\mathcal K) = 2$. From computation of Chern classes using Proposition \ref{P2} and taking long exact cohomology sequence of \eqref{e5} and using cohomological facts listed in the preliminaries  we have the following :
\begin{itemize}
\item $c_1(\mathcal K) = -5H$.

\item $c_2(\mathcal K) = \text{deg}(L)$.

\item $H^0(X,\mathcal K) = H^1(X, \mathcal K) = 0$.
\end{itemize}

Applying the functor $\mathcal Ext^j(-,\mathcal O_X)$ to the exact sequence \eqref{e5} and noting the following  facts:\\ 
$(i)$ $\mathcal Hom(i_{*}L,\mathcal O_X) = 0$,\\
$(ii)$ $\mathcal Ext^1(\mathcal F, \mathcal O_X) =0, \text{ for all vector bundles}$ $\mathcal F$, we have :
\begin{equation}\label{e6}
0 \to \mathcal O_X^{\oplus 2} \to \mathcal K^{*} \to \mathcal Ext^{1}(i_{*}L,\mathcal O_X) \to 0
\end{equation}

Taking long exact cohomology sequence to the short exact sequence \eqref{e6}, 
  we obtain $h^0(X,\mathcal K^*) \geq 2$. Since  $c_1(\mathcal K^*) = - c_1(\mathcal K) =5H$ and  $c_2(\mathcal K^*) =  c_2(\mathcal K) = \text{deg}(L)$, if we can show that $\mathcal K^*$ is $\mu_H$ stable, then we have $\mathcal K^* \in \mathcal W^1_2 \subset \mathcal M(c_2)$, where $c_2 = \text{deg}(L)$. We know that if $\mathcal K$ is $\mu_H$ stable, then so is its dual. So, the problem reduces to the following :\textit{ Under what assumption on  $\text{deg}(L)$,  $\mathcal K$ is $\mu_H$ stable? }\\

Let if possible $\mathcal K$ is not $\mu_H$ stable. Then it's destabilized by a rank $1$ subsheaf $\mathcal G \subset \mathcal K$. If $\mathcal G$ destabilizes $\mathcal K$, then its double dual $\mathcal G^{**}$ also destabilizes $\mathcal K$ which is a line bundle and hence of the form $\mathcal O_X(s)$ for some integer $s$. Then we must have $\mu_H(\mathcal O_X(s)) \geq \mu_H(\mathcal K)$, which forces $s \geq -2$. Note that, from the short exact sequence \eqref{e5} we have $\mathcal O_X(s) \hookrightarrow  \mathcal O_X^{\oplus 2}$.  If $s >0$, then   $\mathcal O_X \hookrightarrow  \mathcal O_X(-s)^{\oplus 2}$, a contradiction (this can be seen by taking global section). This means $s \leq 0$. Since we have earlier obtained that $H^0(X, \mathcal K) =0$, $s$ can't be $0$. Thus we are left with the only possible values for $s$ as $-2,-1$.\\

Let's first consider the case, $s =-1$. In this case  rewriting $\mathcal Ext^1(i_*L, \mathcal O_X)$ as $O_C(C)\otimes L^*$ 
 and tensoring the short exact sequence  \eqref{e6} with $\mathcal O_X(-4)$ we have the following exact sequence :
\begin{equation}\label{e7}
0 \to \mathcal O_X(-4)^{\oplus 2} \to \mathcal K^* \otimes \mathcal O_X(-4) \to \mathcal O_C(H) \otimes L^* \to 0.
\end{equation}
If $\text{deg}(\mathcal O_C(H) \otimes L^*) = 30 - \text{deg}(L)<0$, then taking long exact cohomology sequence to \eqref{e7} one sees that, $h^0(\mathcal K^* \otimes \mathcal O_X(-4)) = 0$.\footnote{ This can be seen using cohomological equivalences mentioned in section \ref{S1} and the fact that if a line bundle  has negative degree then it has no non-trivial global section.} Using the isomorphism $\mathcal K^* \otimes \mathcal O_X(-4) \cong \mathcal K \otimes \mathcal O_X(1)$, we obtain $h^0( \mathcal K \otimes \mathcal O_X(1))=0$ and hence $\mathcal O_X(-1)$ can't be a line  subbundle of $\mathcal K$. This means under the assumption $\text{deg}(L) \geq 31$, the case $s = -1$ can't occur.\\

For $s=-2$ we carry out similar calculation. Here we tensor the short exact sequence  \eqref{e6} with $\mathcal O_X(-3)$ and obtain the following exact sequence :
\begin{equation}\label{e8}
0 \to \mathcal O_X(-3)^{\oplus 2} \to \mathcal K^* \otimes \mathcal O_X(-3) \to \mathcal O_C(2H) \otimes L^* \to 0.
\end{equation}
If $\text{deg}(\mathcal O_C(2H) \otimes L^*) =  60 - \text{deg}(L)<0$, then taking long exact cohomology sequence we see that $h^0(\mathcal K^* \otimes \mathcal O_X(-3)) = 0$. Using the isomorphism $\mathcal K^* \otimes \mathcal O_X(-3) \cong \mathcal K \otimes \mathcal O_X(2)$, we have $h^0( \mathcal K \otimes \mathcal O_X(2))=0$ and hence $\mathcal O_X(-2)$ can't be a line  subbundle of $\mathcal K$. This means under the assumption $\text{deg}(L) \geq 61$, the case $s= -2$ can't occur.\\
 
Together this implies if $\text{deg}(L) \geq \text{max}\{31,61\} =61$, then $\mathcal K$ is $\mu_H$ stable and hence $\mathcal K^*$ is $\mu_H$ stable. This forces $\mathcal K^* \in \mathcal W^1_2$. We summarize the above discussion in the following proposition.\\

\begin{proposition}\label{4.1}
Let's assume that there exists a base-point free line bundle $L$ on $C$   with $h^0(C,L)=2$ and  $\text{deg}(L) \geq 61$, then the Brill-Noether locus $\mathcal W^1_2 \subset \mathcal M(2;5H,c_2)$ is non-empty, provided $c_2 =\text{deg}(L)$.\\
\end{proposition}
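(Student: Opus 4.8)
The plan is to manufacture an explicit member of $\mathcal W^1_2$ out of the hypothesized line bundle $L$ and then check it is stable. Since $L$ is base-point free it is globally generated, so its pushforward $i_*L$ is globally generated on $X$ and the evaluation map $H^0(C,L)\otimes\mathcal O_X\to i_*L$ is surjective. I would record this as the elementary transformation
$$0 \to \mathcal K \to H^0(C,L)\otimes\mathcal O_X \to i_*L \to 0,$$
so that Proposition \ref{P2} guarantees the kernel $\mathcal K$ is locally free of rank $2$. Feeding the data into Proposition \ref{P2} and using the cohomological facts assembled in Section \ref{S1}, one finds (without any genuine computation beyond Chern-class bookkeeping) that $c_1(\mathcal K)=-5H$, $c_2(\mathcal K)=\deg(L)$, and $H^0(X,\mathcal K)=H^1(X,\mathcal K)=0$.

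The candidate for a point of $\mathcal W^1_2$ is the dual $\mathcal K^*$. Dualizing the defining sequence by applying $\mathcal Ext^j(-,\mathcal O_X)$, and using $\mathcal Hom(i_*L,\mathcal O_X)=0$ together with the vanishing of $\mathcal Ext^1$ of a locally free sheaf, yields
$$0 \to \mathcal O_X^{\oplus 2} \to \mathcal K^* \to \mathcal Ext^1(i_*L,\mathcal O_X) \to 0,$$
whose long exact cohomology sequence gives $h^0(X,\mathcal K^*)\geq 2$. Since $c_1(\mathcal K^*)=5H$ and $c_2(\mathcal K^*)=\deg(L)$, the membership $\mathcal K^*\in\mathcal W^1_2\subset\mathcal M(2;5H,c_2)$ with $c_2=\deg(L)$ follows \emph{provided} $\mathcal K^*$ is $\mu_H$-stable. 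Because stability is preserved under dualizing, it suffices to prove that $\mathcal K$ is $\mu_H$-stable.

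The heart of the argument, and the place where the numerical hypothesis $\deg(L)\geq 61$ is consumed, is this stability verification. Suppose $\mathcal K$ were not stable; a destabilizing rank $1$ subsheaf may be replaced by its reflexive hull, which (since $\mathrm{Pic}(X)\cong\mathbb Z$ by Noether-Lefschetz) is $\mathcal O_X(s)$ for some $s\in\mathbb Z$. Comparing slopes forces $s\geq -2$, while the induced inclusion $\mathcal O_X(s)\hookrightarrow\mathcal O_X^{\oplus 2}$ and the vanishing $H^0(X,\mathcal K)=0$ rule out $s\geq 0$, leaving only $s=-1$ and $s=-2$. I would eliminate each by twisting the dual sequence: twisting by $\mathcal O_X(-4)$ produces the quotient line bundle $\mathcal O_C(H)\otimes L^*$ of degree $30-\deg(L)$, and twisting by $\mathcal O_X(-3)$ produces $\mathcal O_C(2H)\otimes L^*$ of degree $60-\deg(L)$. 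Whenever these degrees are negative, the corresponding global sections vanish, which (after the identifications $\mathcal K^*\otimes\mathcal O_X(-4)\cong\mathcal K\otimes\mathcal O_X(1)$ and $\mathcal K^*\otimes\mathcal O_X(-3)\cong\mathcal K\otimes\mathcal O_X(2)$) forbids the line subbundle in question. Thus $\deg(L)\geq\max\{31,61\}=61$ disposes of both cases at once, forcing $\mathcal K$ to be stable.

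I expect the stability check to be the only real obstacle: the construction and the passage to sections are formal, but excluding the subbundles $\mathcal O_X(-1)$ and $\mathcal O_X(-2)$ needs both the rank-one structure of $\mathrm{Pic}(X)$ and the sharp threshold $\deg(L)\geq 61$, and one must propagate the vanishing of sections carefully through the twisted cohomology sequences using the identities of Section \ref{S1} (in particular that a negative-degree line bundle on the curve has no global sections).
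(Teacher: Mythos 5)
Your proposal is correct and follows essentially the same route as the paper: the elementary transformation defined by the evaluation map of $L$, dualization via $\mathcal Ext^j(-,\mathcal O_X)$ to produce $h^0(\mathcal K^*)\geq 2$, and the reduction of the stability check to excluding the subbundles $\mathcal O_X(-1)$ and $\mathcal O_X(-2)$ by the two twisted cohomology sequences, which is exactly where the threshold $\deg(L)\geq\max\{31,61\}=61$ is used. No gaps to report.
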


Next we ask the question : \textit{for what values of  $d$, there exists a base-point free line bundle $L$ on $C$ such that $h^0(C,L) =2$  and $\text{deg}(L) =d$?}

We analyse what happens when $d \in \{103,104,105,106,107\}$. We basically mimic the calculation  done in \cite{KS}, Proposition $3.2$ with appropriate modification.\\

\textbf{(i) d  = 103} :

Since $g+r-d = 106 +1 -103 >0$, from Theorem \ref{T2.15} 
we have, $\text{dim}(W^1_{103}(C)) \geq 106 - 2.(106 -103 +1) = 106 -8 = 98$. From Lemma \ref{L2.16}, one sees that, $W^1_{103}(C) - W^2_{103}(C)$ is  non-empty. If there exists a base-point free line bundle in $W^1_{103}(C) - W^2_{103}(C)$, then we are done. If not, then any line bundle $L \in W^1_{103}(C) - W^2_{103}(C) $ is not base-point free. In this situation  tensoring each such line bundle 
 with the ideal sheaf of its base locus we obtain a family of base-point free line bundles with exactly $2$ sections (say, $\mathcal A$). We note that, $\text{dim}(\mathcal A) \geq 98$ and $\mathcal A \subset \cup_{e \leq 102}W^1_e(C)$. 
 Since $C \in \mathcal L_5$ is a smooth complete intersection of two smooth surfaces of degree $5$ and $6$ in $\mathbb P^3$, from
Proposition \ref{p22}, it follows that $C$ does not have  a $g^1_8$ and hence does not have a $g^1_4$. Then Theorem \ref{p20} forces $\text{dim}(\cup_{e \leq 102}W^1_e(C)) <  98$, a contradiction. This shows $W^1_{103}(C)$  contains a base-point free line bundle with exactly $2$ sections.\\

\textbf{(ii) d = 104} :

Again as before,  we have $\text{dim}(W^1_{104}(C)) \geq 106 - 2.(106 -104 +1) = 106 -6 = 100$ and $W^1_{104}(C) - W^2_{104}(C)$ is  non-empty. Again if $W^1_{104}(C) - W^2_{104}(C)$ does not contain any base- point free line bundle, then using the technique of tensoring with the ideal sheaf of the base locus, we get a family of base-point free line bundles (with exactly $2$ sections)  $\mathcal A$ such that $\text{dim}(\mathcal A) \geq 100$ and $\mathcal A \subset \cup_{2 \leq e \leq 103}W^1_e(C)$. It's not very difficult to see that $C$ is not hyperelliptic, trigonal or a smooth plane quintic. 
 Note that, $\omega_C \cong \mathcal O_C(7)$ and for the divisor $D = \mathcal O_C(6)$ we have an embedding  $C \hookrightarrow \mathbb P^3$  given by  the isomorphism $\omega_C \otimes \mathcal O_C(-D) \cong \mathcal O_C(1)$. Then the fact that $C$ is  not bi-elliptic follows from Proposition \ref{p26}.  From  Theorem \ref{p17}  followed by Theorem \ref{p18} we have, $\text{dim}(\cup_{2 \leq e \leq 103}W^1_e(C)) <  100$, a contradiction. This shows $W^1_{104}(C)$  contains a base-point free line bundle with exactly $2$ sections.\\

\textbf{(iii) d = 105} :

From Theorem \ref{p21},  there exist a base-point free complete pencil $g^1_{105}$ on $C$.\\

\textbf{(iv) d = 106} :

Again as before, we have $\text{dim}(W^1_{106}(C)) \geq 106 - 2.(106 -106 +1) = 106 -2 = 104$ and $W^1_{106}(C) - W^2_{106}(C)$ is  non-empty. If it does not contain any line bundle of desired kind, then by previously used technique of tensoring with the ideal sheaf of the base  locus, we obtain a family of base-point free line bundles with exactly $2$ sections $\mathcal A$ such that $\text{dim}(\mathcal A) \geq 104$ and $\mathcal A \subset \cup_{2 \leq e \leq 105}W^1_e(C)$.  From  Theorem \ref{p17} we have, $\text{dim}(\cup_{2 \leq e \leq 105}W^1_e(C)) \leq  102$, a contradiction. This shows $W^1_{106}(C)$  contains a base-point free line bundle with exactly $2$ sections.\\

\textbf{(v) d = 107} :

Since, $106 = \text{dim(Pic}^1(C)) \geq \text{dim}(W^1_{107}(C)) \geq 106 - 2.(106 -107 +1) = 106 $, $\text{dim}(W^1_{106}(C)) =106$. Again in the extreme unfavourable situation as before we use tensoring technique and obtain a family of base-point free line bundles with exactly $2$ sections $\mathcal A$ such that $\text{dim}(\mathcal A) = 106$ and is contained in $\cup_{ e \leq 106}W^1_d(C)$. Since $\text{dim}(W^0_e(C)) = e$ and $W^1_e(C)$ is a proper closed subset in $W^0_e(C)$, 
 this forces $\text{dim}(\cup_{ e \leq 106}W^1_e(C)) \leq 105 <106$, a contradiction. This shows $W^1_{106}(C)$  contains a base-point free line bundle with exactly $2$ sections.\\

We conclude the content of the above discussion in the following proposition.\\

\begin{proposition}\label{4.2}
If $d \in \{103,104,105,106,107\}$, then for each such $d$ there exists a base-point free line bundle $L$ on $C$ with $h^0(C,L)=2$ such that $\text{deg}(L) =d$.\\
\end{proposition}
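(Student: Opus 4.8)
The plan is to treat the five degrees by one common mechanism, isolating $d=105$ as the one value that is settled outright. First I would pin down the geometry of $C$. As a member of $|5H|$ it is a smooth complete intersection of a quintic with the sextic $X$, hence of type $(5,6)$ in $\mathbb P^3$, with $g(C)=106$ as already computed. By Proposition \ref{p22} it carries no $g^1_8$, and since a $g^1_k$ yields a $g^1_{k+j}$ by adding $j$ base points, $C$ possesses no $g^1_2$, $g^1_3$ or $g^1_4$; in particular it is neither hyperelliptic nor trigonal. Using the embedding coming from $\omega_C\cong\mathcal O_C(7)$, with $D=\mathcal O_C(6)$ and the isomorphism $\omega_C\otimes\mathcal O_C(-D)\cong\mathcal O_C(1)$, Proposition \ref{p26} shows $C$ is not bi-elliptic, and its genus forbids its being a smooth plane quintic. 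These are exactly the geometric inputs that the dimension estimates will consume.

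For each admissible $d$ I would compute the Brill--Noether number $\rho(106,1,d)=106-2(107-d)=2d-108$ and verify the hypotheses $g-d+1=107-d\ge 0$ of Theorem \ref{T2.15} and $g+1-d\ge 0$ of Lemma \ref{L2.16}; together these give $W^1_d(C)-W^2_d(C)\neq\emptyset$ with every component of $W^1_d(C)$ of dimension at least $2d-108$. The real work is base-point-freeness: a pencil with nonempty base locus $B$ yields, after twisting down by $B$, a base-point-free pencil of strictly smaller degree $e<d$ lying in $W^1_e(C)$, so if $W^1_d(C)$ contained no base-point-free member its components would be forced into the sweep governed by $\bigcup_{e<d}W^1_e(C)$. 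I would then rule this out by bounding $\dim\bigcup_{e<d}W^1_e(C)$ strictly below $2d-108$.

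The upper bounds are where the degrees split. For $d=105$ nothing beyond Theorem \ref{p21} is needed: a non-hyperelliptic curve of genus $\ge 4$ already carries a base-point-free complete pencil $g^1_{g-1}=g^1_{105}$. For $d=103$ and $d=104$ the comparison is tight, and I would invoke Keem's theorem (Theorem \ref{p20}) in contrapositive form, using the absence of a $g^1_4$ to get $\dim W^1_e(C)\le e-5$ for $e\le g-3=103$; alternatively, since $C$ is not trigonal, bi-elliptic or a plane quintic, Mumford's refinement (Theorem \ref{p18}) improves the Martens bound $\dim W^1_e(C)\le e-3$ of Theorem \ref{p17} to $\dim W^1_e(C)\le e-4$. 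For $d=106$ the bare Martens estimate already gives $\dim\bigcup_{e\le 105}W^1_e(C)\le 102<104=2\cdot106-108$. Finally $d=107$ is an endpoint: here $2d-108=106=\dim\mathrm{Pic}^1(C)$ pins $\dim W^1_{107}(C)=106$, while each $W^1_e(C)$ is a proper closed subset of $W^0_e(C)$, a variety of dimension $e\le 106$, so $\dim\bigcup_{e\le 106}W^1_e(C)\le 105<106$.

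The hard part will be making the comparison \emph{strict} at the extreme degrees $d=103$ and $d=107$, where the Brill--Noether lower bound coincides with the naive dimension of the non-base-point-free sweep; there one must deploy the sharpest available estimates (Keem's bound via the absence of a $g^1_4$, and the proper containment $W^1_e(C)\subsetneq W^0_e(C)$) and account carefully for the dimension contributed by the base loci. The entire scheme rests on the geometric classification of $C$, since the refinements of Keem and Mumford are available only because $C$ has no $g^1_4$ and is not trigonal, bi-elliptic or a smooth plane quintic; among these, verifying non-bi-ellipticity through Proposition \ref{p26} and the canonical-type embedding is the most delicate step, and it is the one I would scrutinize most carefully before trusting the count.
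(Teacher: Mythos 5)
Your proposal is correct and follows essentially the same route as the paper: Theorem \ref{T2.15} and Lemma \ref{L2.16} for non-emptiness of $W^1_d(C)-W^2_d(C)$, the base-locus-removal reduction, and then exactly the paper's case division (Harris's Theorem \ref{p21} at $d=105$, Keem/Mumford--Martens at $d=103,104$, Martens alone at $d=106$, and the proper containment $W^1_e\subsetneq W^0_e$ at $d=107$), with the same geometric inputs (no $g^1_8$ from Proposition \ref{p22}, non-bi-ellipticity from Proposition \ref{p26}). The only caveat is that your ``alternative'' Mumford bound $\dim W^1_e\le e-4$ is not strict enough at $d=103$ (it gives $\le 98$, not $<98$), so there Keem's bound is genuinely required --- which you in fact acknowledge in your closing paragraph.
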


We are now in a position to prove the first part of the promised theorem \ref{PP1}.\\

\begin{theorem}\label{T4.3}
If $103 \leq c_2 \leq 107$, then $\mathcal W^{1}_{2} - \mathcal W^{2}_{2} \subset \mathcal M(2;5H,c_2)$ is non-empty.
\end{theorem}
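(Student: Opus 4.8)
The plan is to combine the two preceding propositions directly. By Proposition \ref{4.2}, for each $d \in \{103,104,105,106,107\}$ there exists a base-point free line bundle $L$ on a smooth curve $C \in |5H|$ with $h^0(C,L)=2$ and $\deg(L)=d$. Since $d \geq 61$ in every such case, Proposition \ref{4.1} applies and produces a vector bundle $\mathcal K^* \in \mathcal W^1_2 \subset \mathcal M(2;5H,c_2)$ with $c_2 = d$, where $\mathcal K$ is the kernel of the evaluation map $H^0(C,L)\otimes \mathcal O_X \to i_*L$ from the exact sequence \eqref{e5}. This immediately gives non-emptiness of $\mathcal W^1_2$ for $103 \leq c_2 \leq 107$. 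The remaining point — and the genuine content of the theorem as stated — is to verify that the candidate $\mathcal K^*$ lies in $\mathcal W^1_2 - \mathcal W^2_2$, i.e.\ that $h^0(X,\mathcal K^*) = 2$ exactly, rather than merely $\geq 2$.

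First I would pin down $h^0(X,\mathcal K^*)$ by analyzing the long exact cohomology sequence of \eqref{e6},
\begin{equation*}
0 \to \mathcal O_X^{\oplus 2} \to \mathcal K^* \to \mathcal Ext^1(i_*L,\mathcal O_X) \to 0,
\end{equation*}
which on global sections reads
\begin{equation*}
0 \to H^0(\mathcal O_X)^{\oplus 2} \to H^0(\mathcal K^*) \to H^0(\mathcal Ext^1(i_*L,\mathcal O_X)) \to H^1(\mathcal O_X)^{\oplus 2}.
\end{equation*}
Here $H^0(\mathcal O_X)^{\oplus 2}$ is $2$-dimensional, and since $H^1(X,\mathcal O_X)=0$ (recorded in the preliminaries), the issue is to show $H^0$ of the $\mathcal Ext^1$ term vanishes. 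The sheaf $\mathcal Ext^1(i_*L,\mathcal O_X)$ is supported on $C$ and, as used in the derivation preceding \eqref{e7}, is identified with $\mathcal O_C(C)\otimes L^* \cong \mathcal O_C(5H)\otimes L^*$ (using $\omega_X \cong \mathcal O_X(2)$ and adjunction). Its degree on $C$ is $C^2 - \deg(L) = 150 - d$, which is positive, so this direct degree count alone does not force vanishing of sections; one must instead argue via the dual description. The cleanest route is to compute $h^0(\mathcal K^*)$ from $\mathcal K^* \cong \mathcal K \otimes \mathcal O_X(5)$, or equivalently to translate the condition $h^0(X,\mathcal K^*)=2$ into the statement $h^0(C,L)=2$ together with the injectivity (indeed bijectivity onto the image) of the map $H^0(X,\mathcal O_X)^{\oplus 2} \to H^0(X,\mathcal K^*)$; since $h^0(C,L)=2$ is exactly what Proposition \ref{4.2} guarantees, one expects the connecting map to kill all sections coming from the $\mathcal Ext^1$ term.

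The main obstacle I anticipate is precisely this last exactness/counting step: showing that no extra sections of $\mathcal K^*$ arise beyond the two tautological ones, equivalently that $\mathcal K^* \notin \mathcal W^2_2$. I would handle it by dualizing back to $C$: the two sections of $\mathcal K^*$ produced by \eqref{e6} correspond under Serre duality and the evaluation construction to a basis of $H^0(C,L)$, and any third independent section of $\mathcal K^*$ would force either $h^0(C,L) \geq 3$ (contradicting the defining property of $L$) or a splitting incompatible with the $\mu_H$-stability of $\mathcal K^*$ already established in Proposition \ref{4.1}. Since a stable bundle is simple, $\mathcal K^*$ cannot contain $\mathcal O_X^{\oplus 3}$ as a subsheaf, which caps $h^0$ at the contributions genuinely attributable to $L$. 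Assembling these observations yields $h^0(X,\mathcal K^*)=2$, hence $\mathcal K^* \in \mathcal W^1_2 - \mathcal W^2_2$, completing the proof for all $c_2$ with $103 \leq c_2 \leq 107$.
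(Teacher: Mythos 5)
Your reduction of the theorem to Propositions \ref{4.1} and \ref{4.2} matches the paper, and you correctly isolate the real content: showing the candidate $\mathcal K^*$ has \emph{exactly} two sections, so that it lands in $\mathcal W^1_2 - \mathcal W^2_2$ and not merely in $\mathcal W^1_2$. But your resolution of that step has a genuine gap. From the cohomology sequence of \eqref{e6} and the fact that $H^1(X,\mathcal O_X)=0$, the map $H^0(\mathcal K^*) \to H^0(\mathcal Ext^1(i_*L,\mathcal O_X))$ is \emph{surjective}, so
\begin{equation*}
h^0(X,\mathcal K^*) \;=\; 2 + h^0\bigl(X,\mathcal Ext^1(i_*L,\mathcal O_X)\bigr)
\end{equation*}
on the nose. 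There is no connecting map available to ``kill'' sections of the $\mathcal Ext^1$ term --- its target is $H^1(\mathcal O_X)^{\oplus 2}=0$, so every such section lifts to a section of $\mathcal K^*$. Your expectation on this point is exactly backwards. The fallback argument via stability also fails: a $\mu_H$-stable rank $2$ bundle can perfectly well have $h^0 \geq 3$ (this is the whole subject of Brill--Noether theory on surfaces); three independent sections give a map $\mathcal O_X^{\oplus 3}\to\mathcal K^*$ that is generically of rank $\leq 2$ and hence has a kernel, so no subsheaf $\mathcal O_X^{\oplus 3}\subset\mathcal K^*$ arises and no contradiction with simplicity or with $h^0(C,L)=2$ is produced. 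As you yourself note, $\deg(\mathcal O_C(C)\otimes L^*)=150-d>0$ in the relevant range, so extra sections are not excluded by degree either.

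The paper closes this gap differently: it imposes the additional open condition $h^0(X,\mathcal Ext^1(i_*L,\mathcal O_X))=0$ on the line bundle $L$ --- equivalently, by Serre duality on $C$ (using $\omega_C\cong\mathcal O_C(7)$), the condition $h^1(C,L(2))=0$ --- and asserts that the locus of base-point free $L$ with exactly two sections satisfying this extra vanishing is non-empty inside the families constructed in Proposition \ref{4.2} (this same locus, with a dimension bound, is what drives Proposition \ref{5.1}). With such a choice of $L$, the displayed equality immediately gives $h^0(\mathcal K^*)=2$. To repair your proof you would need to add this genericity condition on $L$ and justify that it can be achieved, rather than trying to extract the bound on $h^0(\mathcal K^*)$ from stability or from $h^0(C,L)=2$ alone.
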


\begin{proof}
For each $c_2 \in \{103,104,105,106,107\}$, Proposition \ref{4.2} says that there exists a base-point free line bundle $L$ of $\text{deg}(L) =c_2$ on $C$ with exactly $2$ sections. Since all of these numbers mentioned above is $\geq 61$, Proposition \ref{4.1} applies and we have $\mathcal W^1_2 \subset \mathcal M(2;5H,c_2)$ is non-empty. Now the only thing left to prove is that the element chosen  for showing the non-emptiness of $\mathcal W^1_2$ in proposition \ref{4.1} can be constructed in such a way that it has exactly $2$ sections. But this can be done because the space of base-point free line bundles $L$ on $C$ with exactly $2$ independent global sections and $h^0(X, \mathcal Ext^1(i_*L,\mathcal O_X)) =0$ is non-empty (for the mentioned values of $c_2$). If we choose line bundle from this space, then we are through.
\end{proof}

We end this section by pointing out the following remarks.\\

\begin{remark}\label{R4.5}
$(i)$ We observe that for every pair of  numbers $(d,n)$ satisfying $5 \leq d < 6+n$, one has for every $\mathcal E \in \mathcal M(2;(d-2+n)H,c_2)$ on a very general surface $S$ of degree $d$ in $\mathbb P^3$, $H^2(S,\mathcal E) =0$. In particular if $t \geq 5$, then for any $\mathcal E \in \mathcal M(2;tH,c_2)$ on $X$, one has $H^2(X,\mathcal E) =0$ and therefore it makes sense to talk about their Brill-Noether locus by theorem \ref{T4}. We have only considered here the case $t=5$. \\

$(ii)$ If we write $t = 4+n$, where $n \in \mathbb N$, then similar results can be obtained for any such $t$. To be more precise we obtain the following results :\\
\begin{itemize}
\item Let's assume that there exists a base-point free line bundle $L$ on a smooth, irreducible, projective, algebraic curve $C \in |(4+n)H|$ with $\text{deg}(L) \geq 6(4+n)\lfloor{2+ \frac{n}{2}}\rfloor +1$, then the Brill-Noether locus $\mathcal W^1_2 \subset \mathcal M(2;(4+n)H,c_2)$ is non-empty, provided $c_2 =\text{deg}(L)$. Note that in this case genus of such a curve is given by $g = 3(4+n)(6+n) +1$.\\

\item If $d \in \{g-3,g-2,g-1,g,g+1\}$, then for each such $d$ there exists a base-point free line bundle $L$  on $C$ with $h^0(C,L) =2$ such that $\text{deg}(L) =d$.\\

\item  If $g-3 \leq c_2 \leq g+1$, then $\mathcal W^{1}_{2} - \mathcal W^{2}_{2} \subset \mathcal M(2;tH,c_2)$ is non-empty.\\
\end{itemize}

$(iii)$   We have for $103 \leq c_2 \leq 107$, the generalized Brill-Noether number $\rho^1_2 \geq 2c_2 -53 >0 $ and corresponding Brill-Noether loci are also non-empty.
\end{remark}

\section{Existence of smooth points in $\mathcal W^1_2$ and irreducible components having expected dimension}\label{S4}
    
Here we first establish the existence of smooth points in $\mathcal M(2;5H,c_2)$.\\

\begin{proposition}\label{5.1}
Let $103 \leq c_2 \leq 107$ and $\mathcal E \in \mathcal W^1_2 - \mathcal W^2_2 \subset \mathcal M(2;5H,c_2)$ be a general member constructed as  in Proposition \ref{4.1} for establishing non-emptiness, then $\mathcal E$ is a smooth point of $\mathcal M(2;5H,c_2)$.
\end{proposition}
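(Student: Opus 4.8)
The plan is to prove that such an $\mathcal E$ is unobstructed. For a $\mu_H$-stable rank $2$ bundle with fixed determinant the obstruction space is the trace-free part $\mathrm{Ext}^2(\mathcal E,\mathcal E)_0$, which by Serre duality is dual to $H^0(X,\mathrm{End}^0(\mathcal E)\otimes\omega_X)$; thus I must show that $\mathcal E$ is \emph{not} potentially obstructed in the sense of Proposition \ref{p8}. Since that proposition is stated for determinant $\mathcal O_X(1)$, I would first pass to the twist $\mathcal E':=\mathcal E\otimes\mathcal O_X(-2)$, which has $\det\mathcal E'\cong\mathcal O_X(1)$. Tensoring by a line bundle is an isomorphism $\mathcal M(2;5H,c_2)\xrightarrow{\sim}\mathcal M(2;H,c_2-36)$ carrying smooth points to smooth points, and $\mathrm{End}^0(\mathcal E')\cong\mathrm{End}^0(\mathcal E)$, so being potentially obstructed is invariant under this twist. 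It therefore suffices to prove $H^0(X,\mathrm{End}^0(\mathcal E')\otimes\omega_X)=0$, and by Proposition \ref{p8} I only need to exclude the two alternatives: $(i)$ $H^0(X,\mathcal E')\neq0$, and $(ii)$ $\mathcal E'$ arises from the double-cover construction.

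To rule out $(i)$ I would twist the defining sequence \eqref{e6} by $\mathcal O_X(-2)$. Using the relative-duality identification $\mathcal Ext^1(i_*L,\mathcal O_X)\cong i_*\big(L^*\otimes\mathcal O_C(5H)\big)$ already employed in Section \ref{S3}, this gives
\begin{equation*}
0\to\mathcal O_X(-2)^{\oplus2}\to\mathcal E(-2)\to i_*\big(L^*\otimes\mathcal O_C(3H)\big)\to0 .
\end{equation*}
Because $H^0(X,\mathcal O_X(-2))=0$ by the cohomology facts of Section \ref{S1}, the group $H^0(X,\mathcal E(-2))$ injects into $H^0\big(C,L^*\otimes\mathcal O_C(3H)\big)$; the latter vanishes since $\deg\big(L^*\otimes\mathcal O_C(3H)\big)=90-c_2<0$ for $c_2\geq103$. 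Hence $H^0(X,\mathcal E')=0$ for \emph{every} bundle constructed as in Proposition \ref{4.1}, so $(i)$ cannot hold.

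The genuinely delicate step is $(ii)$, which I would settle by a dimension count against Proposition \ref{p9}, that bounds the locus of type-$(ii)$ bundles by dimension $39$. Fix a smooth $C\in|5H|$, of genus $106$. Since $g-c_2+1=107-c_2\geq0$ for $c_2\leq107$, Theorem \ref{T2.15} guarantees that every component of $W^1_{c_2}(C)-W^2_{c_2}(C)$ has dimension at least $\rho=2c_2-108\geq98$. Moreover the construction $L\mapsto\mathcal E(L)=\mathcal K^*$ is injective: from \eqref{e6} the cokernel $i_*\big(L^*\otimes\mathcal O_C(5H)\big)$ of the evaluation map $H^0(\mathcal E)\otimes\mathcal O_X\to\mathcal E$ is recovered canonically from $\mathcal E$, and from it one reads off both the support $C$ and the line bundle $L$. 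Consequently the family of constructed bundles has dimension at least $98$, far exceeding the bound $39$ of Proposition \ref{p9}; therefore a general such $\mathcal E$ (equivalently $\mathcal E'$) is not of type $(ii)$.

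Putting the three steps together, a general member $\mathcal E$ produced by Proposition \ref{4.1} satisfies $H^0(X,\mathrm{End}^0(\mathcal E')\otimes\omega_X)=0$, so $\mathcal E'$, and hence $\mathcal E$, is unobstructed and is a smooth point of its moduli space. I expect the principal obstacle to be the dimension estimate excluding $(ii)$: it hinges simultaneously on the Brill--Noether lower bound for the family of pencils on $C$ and on the injectivity of the Serre-type construction, which together force the constructed locus to be too large to sit inside the $39$-dimensional obstructed locus.
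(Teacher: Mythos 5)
Your proposal is correct and follows essentially the same route as the paper: twist by $\mathcal O_X(-2)$ to land in $\mathcal M(2;H,c_2-36)$, invoke Proposition \ref{p8} to reduce to the two alternatives, rule out $(i)$ via $H^0(\mathcal E(-2))=0$, and rule out $(ii)$ by comparing the $\geq 98$-dimensional family of pencils against the $39$-dimensional bound of Proposition \ref{p9}. You in fact supply two details the paper leaves implicit --- the explicit vanishing argument for $H^0(\mathcal E(-2))$ and the injectivity of $L\mapsto\mathcal E$ needed to transfer the dimension count from line bundles to the moduli space.
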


\begin{proof}
Here the main idea is to move from the moduli space at hand to some moduli space about which a number of geometric results are known. Indeed we see that the map given by
\begin{align*}
f : & \mathcal M(2;5H,c_2) \to \mathcal M(2;H,c_2-36)\\
& \mathcal E \mapsto \mathcal E \otimes \mathcal O_X(-2)
\end{align*}
is an isomorphism\footnote{ It's easy to see the following :
\begin{itemize}
\item if $\mathcal E$ is a rank $2$, $\mu_H$-stable vector bundle, then so is $\mathcal E \otimes \mathcal O_X(-2)$.

\item if $c_1(\mathcal E) = 5H, c_2(\mathcal E) = c_2$, then using results from section \ref{S1} we have $c_1(\mathcal E \otimes \mathcal O_X(-2)) = 5H-4H =H, c_2(\mathcal E \otimes \mathcal O_X(-2)) = c_2 -36$.

\item the map in the opposite direction is given by $\mathcal F \mapsto \mathcal F \otimes \mathcal O_X(2)$, which makes $f$ an isomorphism. 

\end{itemize} }.This means $\mathcal E$ is smooth in $\mathcal M(2;5H,c_2)$ iff $\mathcal E \otimes \mathcal O_X(-2)$  is smooth in $\mathcal M(2;H,c_2-36)$. This enables us to concentrate  proving the smoothness of $\mathcal G := \mathcal E \otimes \mathcal O_X(-2)$ in $\mathcal M(2;H,c_2-36)$. We give a proof by contradiction.\

Let if possible $\mathcal G$ is not a smooth point of $\mathcal M(2;H,c_2-36)$.  Since it's not smooth, it's \textit{potentially obstructed} (see discussion before Proposition $5.1$, \cite{Simpson11}), i.e $h^2(X,\text{End}^0(\mathcal G)) >0$. By Serre duality, we have, $h^0(X, \text{End}^0(\mathcal G) \otimes \omega_X) >0$\footnote{This is because the trace free part is self dual, i.e $\text{End}^0(\mathcal G)^* \cong\text{End}^0(\mathcal G)$ }. Since the obstruction space of $\mathcal G$ is non-zero, from Lemma $2.1$ of \cite{Simpson11}, we have there exists a non-trivial Higgs field $\phi : \mathcal G \to \mathcal G \otimes \omega_X$ with trace $\text{tr}(\phi) =0$. From Proposition \ref{p8} one sees that it's a potentially obstructed bundle of type $(ii)$ as described in that proposition (Note that it can't be of type $(i)$ by assumption). Now from Proposition \ref{p9}, the dimension of locus of such potentially obstructed bundles is $\leq 39$. But the dimension of family of base-point free line bundles $L$ on $C$ such that $103 \leq \text{deg}(L) \leq 107$, $h^0(C,L) =2$, $h^0(X, \mathcal Ext^1(i_*L,\mathcal O_X)) =0$ has dimension atleast $40$, a contradiction (from the discussions in the previous section \ref{S3}). This means $\mathcal G$ is  a smooth point of $\mathcal M(2;H,c_2-36)$ and consequently $\mathcal E$ is smooth in $\mathcal M(2;5H,c_2)$. \\
\end{proof}
 
 
Next we prove that the same smooth point of $\mathcal M(2;5H,c_2)$ is also a smooth point of $\mathcal W^1_2$ using the injectivity of Petri map. To be more precise we prove the promised Theorem \ref{PP2}. The same proof as that of Theorem $3.5$, \cite{KS} works. For convenience we present the proof in our case.\\

\begin{theorem}
If $103 \leq c_2 \leq 107$, then  $\mathcal W^{1}_{2} \subset \mathcal M(2;5H,c_2)$ contains a smooth point and  the irreducible component containing it has the actual dimension same as the expected dimension.
\end{theorem}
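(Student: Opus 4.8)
The plan is to combine the two results just established—that our chosen bundle $\mathcal E$ is a smooth point of the moduli space $\mathcal M(2;5H,c_2)$ (Proposition \ref{5.1}) and that it lies in $\mathcal W^1_2 - \mathcal W^2_2$, hence has exactly two sections (Theorem \ref{T4.3})—with the injectivity of the Petri map, as outlined in the remark following Definition \ref{petri}. Recall from that remark that once $\mathcal E$ is a smooth point of $\mathcal M_H$, injectivity of the Petri map $\mu$ is equivalent to $\mathcal E$ being a smooth point of $\mathcal W^1_2$, with the component through it having the expected dimension. So the whole theorem reduces to verifying that for our specific $\mathcal E = \mathcal K^*$, the Petri map is injective.

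First I would write down the Petri map explicitly in our situation. With $n = 2$, $\omega_X \cong \mathcal O_X(2)$, and using $\mathcal E^* \cong \mathcal E \otimes \mathcal O_X(-5)$ from \eqref{e3}, the map \eqref{Petri} becomes
\begin{equation*}
\mu : H^0(X,\mathcal E) \otimes H^1(X, \omega_X \otimes \mathcal E^*) \to H^1(X, \omega_X \otimes \mathcal E \otimes \mathcal E^*).
\end{equation*}
Since $\mathcal E$ has exactly two sections, $h^0(X,\mathcal E) = 2$, so the source has dimension $2 \cdot h^1(X, \mathcal E \otimes \mathcal O_X(-3))$. The strategy is to exploit that $\mathcal E$ arises from the elementary transformation \eqref{e5}, i.e. $\mathcal E = \mathcal K^*$ where $\mathcal K$ sits in $0 \to \mathcal K \to H^0(C,L)\otimes\mathcal O_X \to i_*L \to 0$. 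This lets me relate the cohomology groups appearing in $\mu$ to cohomology of the line bundle $L$ on the curve $C$, where classical Brill-Noether theory applies. In particular, the two global sections of $\mathcal E$ pull back to the two sections $\sigma_0,\sigma_1$ of $L$ defining the base-point-free pencil, and the Petri map on $X$ should be identifiable with (or controlled by) the classical Petri map $H^0(C,L)\otimes H^0(C,\omega_C\otimes L^{-1}) \to H^0(C,\omega_C)$ for the line bundle $L$.

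The main obstacle is precisely this reduction: showing that injectivity of the surface Petri map $\mu$ follows from—or can be checked via—the curve-level Petri-type statement, and then establishing the latter. Since we are following the proof of Theorem $3.5$ in \cite{KS} verbatim in structure, I expect the argument to invoke the base-point-freeness of the pencil together with the fact that $C$ (a complete intersection of type $(5,6)$) is a general enough Petri curve, so that the base-point-free pencil trick or the generalized Petri condition forces the kernel of $\mu$ to vanish. The delicate point is tracking the twists correctly: one must match $\omega_X \otimes \mathcal E^*$ and $\omega_X \otimes \mathcal E\otimes\mathcal E^* = \mathcal End(\mathcal E)\otimes\omega_X$ against the curve data after restricting along $i : C \hookrightarrow X$, and confirm that no extra contributions to the kernel arise from the surface geometry beyond what the curve computation controls.

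Once injectivity of $\mu$ is in hand, the conclusion is immediate: by the remark after Definition \ref{petri}, $\mathcal E$ is a smooth point of $\mathcal W^1_2$ and the irreducible component of $\mathcal W^1_2$ containing $\mathcal E$ has dimension equal to the generalized Brill-Noether number $\rho^1_2$, i.e. its actual dimension equals the expected dimension. Combining this with Theorem \ref{T4.3}, which guarantees $\mathcal W^1_2 - \mathcal W^2_2$ is non-empty for $103 \le c_2 \le 107$ and that such an $\mathcal E$ exists, completes the proof of the theorem.
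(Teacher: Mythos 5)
Your framing is correct as far as it goes: the theorem does reduce, via Proposition \ref{5.1} and the remark following Definition \ref{petri}, to the injectivity of the Petri map for the particular bundle $\mathcal E=\mathcal K^*$, and your bookkeeping of the twists ($\omega_X\otimes\mathcal E^*\cong\mathcal E(-3)$, $h^0(\mathcal E)=2$) is accurate. The problem is that the one step carrying all of the content — actually proving injectivity — is left as an expectation. Worse, the route you sketch would not work: you propose to identify the surface Petri map with the classical Petri map $H^0(C,L)\otimes H^0(C,\omega_C\otimes L^{-1})\to H^0(C,\omega_C)$ and then invoke that $C$ is ``a general enough Petri curve'' or the base-point-free pencil trick. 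No such identification is established (the surface map runs between $H^1(X,(\mathcal E^*\otimes\omega_X)^{\oplus 2})$ and $H^1(X,\mathrm{End}(\mathcal E)\otimes\omega_X)$, not between $H^0$'s on $C$), and $C$ is a fixed complete intersection of type $(5,6)$ on a very general sextic, not a Brill--Noether general curve, so the Gieseker--Petri theorem cannot be quoted for it. The paper never claims $C$ is a Petri curve and does not use any such input.

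What the paper actually does is an elementary cohomological computation exploiting the extension structure of $\mathcal E$. Tensoring $0\to\mathcal O_X^{\oplus 2}\to\mathcal E\to\mathcal O_C(C)\otimes L^*\to 0$ (from \eqref{e6}) with $\mathcal E^*\otimes\omega_X$ and using $h^0(\mathcal E^*\otimes\omega_X)=h^2(\mathcal E)=0$ together with $H^0(\mathrm{End}(\mathcal E)\otimes\omega_X)\cong H^0(\omega_X)$ (which is where the smoothness of $\mathcal E$ in $\mathcal M(2;5H,c_2)$ enters), one sees that the Petri map is the map $H^1(X,(\mathcal E^*\otimes\omega_X)^{\oplus 2})\to H^1(X,\mathrm{End}(\mathcal E)\otimes\omega_X)$ in the resulting long exact sequence, and its injectivity is equivalent to the surjectivity of $H^0(X,\omega_X)\to H^0(C,\mathcal O_C(C)\otimes L^*\otimes(\mathcal E^*\otimes\omega_X)|_C)$. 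Restricting the dualized sequence to $C$ sandwiches this last group between $H^0(C,\omega_X|_C)$ and $H^0(C,\mathcal O_C(C)\otimes L^{*\otimes 2}\otimes\omega_X|_C)$, and the two vanishings $H^0(C,\mathcal O_C(C)\otimes L^*)=0$ (built into the choice of $L$) and $H^0(C,\mathcal O_C(C)\otimes L^{*\otimes 2}\otimes\omega_X|_C)=0$ conclude. To complete your write-up you must supply this (or an equivalent) computation; the appeal to a generic Petri condition on $C$ should be removed, as it is both unjustified and unnecessary.
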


\begin{proof}
Let $\mathcal E \in \mathcal W^{1}_{2} - \mathcal W^{2}_{2}$ be a smooth point in $\mathcal M(2,5H,c_2)$ as described in the previous section \ref{S3} \footnote{Existence of such an $\mathcal E$ is guaranteed by proposition \ref{5.1}}.  Then from the short exact sequence \eqref{e6} one sees that $\mathcal E$ fits into the following  short exact sequence :
\begin{equation}\label{e5.1}
0 \to \mathcal O_X^{\oplus 2} \to \mathcal E \to \mathcal O_C(C)\otimes L^* \to 0 
\end{equation}

Tensoring the  exact sequence \eqref{e5.1} by $\mathcal E^* \otimes \omega_X$ and noting that $\mathcal E \otimes \mathcal E^* \cong \text{End}(\mathcal E)$ we have the following exact sequence :
\begin{equation}\label{e5.2}
0 \to (\mathcal E^* \otimes \omega_X)^{\oplus 2} \to  \text{End}(\mathcal E) \otimes \omega_X \to \mathcal O_C(C)\otimes L^* \otimes (\mathcal E^* \otimes \omega_X)|_C \to 0
\end{equation}

From Serre duality we have, $h^0(X,\mathcal E^* \otimes \omega_X) = h^2(X,\mathcal E) =0$  and as $\mathcal E$ is a smooth point in the moduli space $\mathcal M(2;5H,c_2)$ one has, $H^0(X, \text{End}(\mathcal E) \otimes \omega_X) \cong H^0(X, \omega_X)$. Thus by taking cohomology long exact sequence  to short exact sequence \eqref{e5.2}, we get the following exact sequence :
\begin{align*}\label{5.3}
0 \to H^0(X, \omega_X) \to & H^0(C, \mathcal O_C(C)\otimes L^* \otimes (\mathcal E^* \otimes \omega_X)|_C)\\
 \to & H^1(X,(\mathcal E^* \otimes \omega_X)^{\oplus 2}) \xrightarrow{\theta} H^1(X, \text{End}(\mathcal E) \otimes \omega_X) \to \cdots .
\end{align*}

We observe that the  map $\theta$ in the above short exact sequence is same as the Petri map $\mu$ defined in \eqref{Petri}.  Thus in order to show that $\mathcal E$ is a smooth point of $\mathcal W^{1}_{2}$, it's enough to show $\theta$ is injective. We dualize the short exact sequence \eqref{e5.1} and restrict it to the curve $C$ to obtain the following exact sequence (on $C$) :
\begin{equation}\label{e5.4}
0 \to \mathcal O_C(-C) \otimes L \to \mathcal E^{*}|_C \to \mathcal O^{\oplus 2}_C \to L\to 0
\end{equation}
 
which induces the another short exact sequence on $C$ given by:
\begin{equation}\label{e5.5}
0 \to \mathcal O_C(-C) \otimes L \to \mathcal E^{*}|_C \to L^* \to 0
\end{equation}

Tensoring \eqref{e5.5} by $\mathcal O_C(C) \otimes L^{*} \otimes \omega_{X}|_C$ and taking long exact cohomology sequence we have :
\begin{align*}
0 & \to H^0(C,\omega_X|_C ) \to H^0(C,\mathcal O_C(C) \otimes L^* \otimes (\mathcal E^* \otimes \omega_X)|_C )\\
& \to H^0(C,\mathcal O_C(C) \otimes L^{*^{\otimes 2}} \otimes \omega_X|_C ) \to \cdots
\end{align*}
From our assumption on $L$, $H^0(C,\mathcal O_C(C) \otimes L^*) = 0$ and as $\text{deg}(L^* \otimes \omega_X|_C ) < 0$, we have $H^0(C,\mathcal O_C(C) \otimes L^{*^{\otimes 2}} \otimes \omega_X|_C ) = 0$. Thus from the above exact sequence one obtains, $H^0(C,\omega_X|_C ) \cong H^0(C,\mathcal O_C(C) \otimes L^{*} \otimes (\mathcal E^{*} \otimes \omega_X)|_C )$. This along with the observation  $H^0(X,\omega_X)\cong H^0(C,\omega_X|_C )$ forces  the map $\theta$ to be injective.\\
\end{proof}

We end this section with the following remark.\\

\begin{remark}
Let $t\geq 5$ be such that $t$ is odd. Let's rewrite $t = 4+n$ as before. In this setting using the same techniques as discussed in this section we can obtain the following general  result : If $3(4+n)(6+n)-2 \leq c_2 \leq 3(4+n)(6+n)+2$, then $\mathcal W^1_2 \subset \mathcal M(2;tH,c_2)$ contains a smooth point and the irreducible component containing it has the actual dimension same as the expected dimension.
\end{remark}

 \section{Non-emptiness of Brill-Noether locus $\mathcal W^0_2$ and components  having expected dimension}\label{S5}
 
In this section our intention is to prove the promised Theorem \ref{PP3}. We begin this section by giving a lower bound on $c_2$ such that $\mathcal W^0_2 \subset \mathcal M(2;5H,c_2)$ becomes non-empty.\\

By $\text{Hilb}^{c_2}(X)$  we mean the Hilbert scheme parametrizing all zero-dimensional subschemes  of $X$ of length $c_2$ and by $\text{Hilb}^{c_2}(X)^{\text{l.c.i}}$ be the open subscheme of the Hilbert scheme $\text{Hilb}^{c_2}(X)$ consisting of length $c_2$ subschemes of $X$ which are locally complete intersections.  For any $Z \in \text{Hilb}^{c_2}(X)^{\text{l.c.i}}$, we have an exact sequence of the following form :
\begin{equation}
0 \to \mathcal O_X \to \mathcal F \to \mathcal J_{Z} \otimes \mathcal O_X(5) \to 0
\end{equation}

where $\mathcal F$ is a rank $2$ torsion free sheaf on $X$.\\

\textbf{Claim }: for $c_2 \geq 117$, we have $\mathcal F \in \mathcal W^0_2$.\\

Since $\mathcal O_X \hookrightarrow \mathcal F$, we already have $h^0(\mathcal F) \geq 1$. Assume that we are able to show that $\mathcal F$ is a vector bundle, then it can also be shown that $\mathcal F$ is $\mu_H$ stable. This enables us to concentrate proving $\mathcal F$ is a vector bundle, for $c_2 \geq 117$. In order to show that $\mathcal F$ is a vector bundle we proceed as follows: Note that, the space of such isomorphism classes of extensions  is parametrized by $\mathbb P(\text{Ext}^{1}(\mathcal J_{Z} \otimes \mathcal O_X(5), \mathcal O_X))$. By Serre duality, we have $\text{Ext}^{1}(\mathcal J_{Z} \otimes \mathcal O_X(5), \mathcal O_X) \cong (H^{1}(X,\mathcal J_{Z} \otimes \mathcal O_X(7)))^*$.  We see that for a general element  $ Z $ with $l(Z) =c_2 \geq 117$, one has $h^0(\mathcal J_{Z} \otimes \mathcal O_X(7))=0$ (because, $h^0(X, \mathcal O_X(7)) = 116 $). Therefore, a general element of $\text{Hilb}^{c_2}(X)^{\text{l.c.i}}, c_2 \geq 117$ satisfies $\text{CB}(7)$ and hence from Theorem \ref{CB}, one sees that  $\mathcal F$ is a rank $2$ vector bundle.\\

\begin{remark}\label{r6.1}
Along the same lines of Theorem $2.8$, \cite{S} it can be shown that $\mathcal M(c_2)$ as introduced in the beginning of  section \ref{S3} is good for $c_2 \geq 117$. Therefore, we have the generalized Brill-Noether number $\rho^0_2 =3c_2 -117 >0$ for $c_2 \geq 117$ and the corresponding Brill-Noether locus $\mathcal W^0_2$ is non-empty.\\
\end{remark}

The following two results give a bound for the dimension of $\mathcal W^{0}_{2}$. \\

\begin{lemma}
If $c_2 \geq 117$, we have $\text{dim}(\mathcal W^{0}_{2}) \leq 3c_2 -117$.
\end{lemma}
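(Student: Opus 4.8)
The plan is to bound $\dim\mathcal W^0_2$ from above by the dimension of an explicit family of extensions and to match it with the expected dimension $\rho^0_2 = 3c_2 - 117$ recorded in Remark \ref{r6.1}; combined with the lower bound coming from Theorem \ref{T4} this pins every component to the expected dimension. First I would exploit the fact that every $\mathcal E\in\mathcal W^0_2$ is a stable bundle with $h^0(\mathcal E)\geq 1$: a nonzero section gives $\mathcal O_X\hookrightarrow\mathcal E$, and saturating this inclusion inside the locally free $\mathcal E$ produces a sub-line bundle $\mathcal O_X(D)$ with torsion-free quotient, i.e.
\[
0 \to \mathcal O_X(D) \to \mathcal E \to \mathcal J_Z(5H-D) \to 0,
\]
with $Z$ a zero-dimensional l.c.i. subscheme. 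Since $\mathrm{Pic}(X)=\mathbb Z H$ and $\mathcal E$ is $\mu_H$-stable, $D=dH$ with $6d=\mu_H(\mathcal O_X(dH))<\mu_H(\mathcal E)=15$, so $d\in\{0,1,2\}$, and comparing Chern classes via Proposition \ref{P2} gives $l(Z)=c_2-6d(5-d)$. Thus $\mathcal W^0_2=\mathcal W_0\cup\mathcal W_1\cup\mathcal W_2$, where $\mathcal W_d$ is the image of the corresponding extension family.

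For the main stratum $d=0$ I would parametrize $\mathcal W_0$ by the total space $\mathcal P_0$ of the projective bundle of extension classes $[e]\in\mathbb P(\mathrm{Ext}^1(\mathcal J_Z(5H),\mathcal O_X))$ over $Z\in\mathrm{Hilb}^{c_2}(X)$. By Serre duality and $\omega_X\cong\mathcal O_X(2)$ one has $\mathrm{Ext}^1(\mathcal J_Z(5H),\mathcal O_X)\cong H^1(X,\mathcal J_Z(7H))^\ast$, and using $0\to\mathcal J_Z(7H)\to\mathcal O_X(7H)\to\mathcal O_Z\to 0$ together with $h^1(\mathcal O_X(7H))=0$ and $h^0(\mathcal O_X(7H))=116$ from Section \ref{S1}, a general $Z$ of length $c_2\geq 117$ has $h^0(\mathcal J_Z(7H))=0$, hence $h^1(\mathcal J_Z(7H))=c_2-116$. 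Over this open locus $\mathcal P_0$ is a $\mathbb P^{c_2-117}$-bundle over the smooth $2c_2$-dimensional $\mathrm{Hilb}^{c_2}(X)$, so $\dim\mathcal P_0=3c_2-117$; and since a general such $\mathcal E$ has $h^0(\mathcal E)=1+h^0(\mathcal J_Z(5H))=1$ (because $h^0(\mathcal O_X(5H))=56<c_2$), the natural map $\mathcal P_0\to\mathcal M(c_2)$ is generically injective, giving the top-dimensional part of $\mathcal W_0$ exactly dimension $3c_2-117=\rho^0_2$. For $d=1,2$ the identical Serre-duality computation with $\mathrm{Ext}^1(\mathcal J_Z((5-d)H),\mathcal O_X(dH))\cong H^1(\mathcal J_Z((7-2d)H))^\ast$ and $l(Z)=c_2-6d(5-d)$ gives $\dim\mathcal P_d=3c_2-129<3c_2-117$, and the fibres of $\mathcal P_d\to\mathcal M(c_2)$ are positive-dimensional because such $\mathcal E$ contain all of $H^0(\mathcal O_X(dH))$ in $H^0(\mathcal E)$; so $\mathcal W_1,\mathcal W_2$ are negligible.

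The hard part will be the jumping locus inside the $d=0$ stratum: for special $Z$ with $h^0(\mathcal J_Z(7H))=a>0$ the fibre dimension of $\mathcal P_0$ grows to $c_2-117+a$, so a naive source-dimension bound can exceed $\rho^0_2$. To control this I would stratify $\mathrm{Hilb}^{c_2}(X)$ by the number of conditions $Z$ imposes on $|\mathcal O_X(7H)|$ and prove the Cayley--Bacharach-type estimate that $V_a=\{Z: h^0(\mathcal J_Z(7H))\geq a\}$ has $\dim V_a\leq 2c_2-a$, so that $\dim V_a+(c_2-116+a)\leq 3c_2-116$; simultaneously, on $V_a$ the bundle $\mathcal E$ acquires extra sections through $h^0(\mathcal E)=1+h^0(\mathcal J_Z(5H))$, enlarging the fibres of $\mathcal P_0\to\mathcal M(c_2)$ and so keeping the image within dimension $\rho^0_2$. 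This balancing of the growth of $\mathrm{Ext}^1$ against the drop in $\dim V_a$ and the growth of the fibres is the delicate step, which I would carry out following the dimension estimates of \cite{S} and then combine with the estimate of \cite{Simpson11} to obtain $\dim\mathcal W^0_2\leq 3c_2-117$. Together with $\dim\mathcal W^0_2\geq\rho^0_2$ from Theorem \ref{T4}, this forces every irreducible component to have the expected dimension, completing Theorem \ref{PP3}.
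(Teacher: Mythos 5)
Your proposal follows essentially the same route as the paper: parametrize pairs (bundle, section) by extension classes in $\mathbb P(\mathrm{Ext}^1(\mathcal J_Z\otimes\mathcal O_X(5),\mathcal O_X))\cong\mathbb P(H^1(\mathcal J_Z\otimes\mathcal O_X(7))^*)$ fibred over $\mathrm{Hilb}^{c_2}(X)$, compute $h^1(\mathcal J_Z\otimes\mathcal O_X(7))=c_2-116$ at a general $Z$, and control the jumping locus by showing that $V_a=\{Z: h^0(\mathcal J_Z\otimes\mathcal O_X(7))\geq a\}$ has codimension at least $a$. Three comments. First, you are more careful than the paper about sections vanishing along a divisor: you saturate and dispose of the strata $D=H,2H$ by the count $3c_2-129<3c_2-117$, whereas the paper simply asserts that the zero locus of a section has codimension two; your extra step is harmless and in fact closes a small gap. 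Second, your accounting on the jumping stratum has an off-by-one that you then try to repair with an argument that does not work: the fibre over $Z\in V_a$ is the \emph{projectivization} of a space of dimension $c_2-116+a$, hence has dimension $c_2-117+a$, so $\dim V_a+(c_2-117+a)\leq 3c_2-117$ already and no further input is needed. The patch you propose instead -- that $\mathcal E$ acquires extra sections over $V_a$, enlarging the fibres of $\mathcal P_0\to\mathcal M(c_2)$ -- is unjustified: membership in $V_a$ concerns $h^0(\mathcal J_Z\otimes\mathcal O_X(7))$, while $h^0(\mathcal E)=1+h^0(\mathcal J_Z\otimes\mathcal O_X(5))$, and positivity of the former does not force positivity of the latter. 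Drop the patch and use the correct fibre dimension. Third, the estimate $\dim V_a\leq 2c_2-a$, which you defer to the literature, is the actual content of the paper's proof: it follows from the incidence variety $T=\{(C,Z): Z\subset C\}\subset\mathbb P(H^0(\mathcal O_X(7)))\times\mathrm{Hilb}^{c_2}(X)$, whose fibres over curves have dimension at most $c_2$, so that $\dim V_a+a-1\leq\dim T\leq 115+c_2$ and hence $\dim V_a\leq c_2+116-a\leq 2c_2-a$ for $c_2\geq 116$; you should supply this short count rather than cite it.
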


\begin{proof}
This proof closely follows \cite{S}, Proposition $2.3$. Let $\mathcal E \in \mathcal W^0_2$. Since $h^0(\mathcal E) >0$, there is a section $s$ corresponding to a morphism $s : \mathcal O_X \to \mathcal E$. The zeros of $s$ are in codimension $2$. Therefore, $s$ fits into an exact sequence of the form :
\begin{equation}
0 \to \mathcal O_X \to \mathcal E \to \mathcal J_{Z} \otimes \mathcal O_X(5) \to 0
\end{equation}

where $Z$ is a zero dimensional locally complete intersection subscheme with $l(Z) = c_2$ satisfying $\text{CB}(7)$. Let $\mathcal N $ be the space of pairs :
\begin{center}

$\mathcal N = \{(\mathcal E,t) | \mathcal E \in \mathcal W^0_2, t \in \mathbb P(H^0(X,\mathcal E))\} $\\
\end{center}

Let's consider the obvious maps $p_1 : \mathcal N \to \mathcal W^0_2$ and $p_2 :  \mathcal N \to \text{Hilb}^{c_2}(X)$. Since $p_1$ is surjective, we have $\text{dim}(\mathcal W^{0}_{2}) \leq \text{dim}(\mathcal N)$. Note that, $\text{dim}(p_2^{-1}(Z)) = \text{dim}(\mathbb P(\text{Ext}^{1}(\mathcal J_{Z} \otimes \mathcal O_X(5), \mathcal O_X))) = h^{1}(X,\mathcal J_{Z} \otimes \mathcal O_X(7)) -1 $. From the canonical exact sequence :
\begin{equation}
0 \to \mathcal J_{Z} \otimes \mathcal O_X(7) \to \mathcal O_X(7) \to \mathcal O_Z(7) \to 0
\end{equation}
we obtain that $h^1(\mathcal J_{Z} \otimes \mathcal O_X(7)) = c_2 -116$, for a general element $ Z $ with $l(Z) =c_2 \geq 117$. Since $\text{dim}(\text{Hilb}^{c_2}(X)) = 2c_2$, one has $\text{dim}(\mathcal N) \leq 3c_2 -117$. Note that this dimension estimate is over general points.\\

Let's consider other subsets, $\Delta_i := \{ Z \in \text{Hilb}^{c_2}(X)| h^0(\mathcal J_{Z} \otimes \mathcal O_X(7)) \geq i\} $. Consider the incidence variety $T = \{(C,Z) | Z \subset C\} \subset \mathbb P(H^0(\mathcal O_X(7))) \times \text{Hilb}^{c_2}(X) $ and let $\pi_1,\pi_2$ be projections on first and second factors. In this situation we have $\text{dim}(\pi_2^{-1}(C)) \leq c_2$ and hence $115 + c_2 \geq \text{dim}(T) \geq \text{dim}(\pi_2)^{-1}(\Delta_i) \geq \text{dim}(\Delta_i) +i -1$. This forces $\text{dim}(\Delta_i)$ is bounded by $c_2 + 116 -i \leq 2c_2 -i$ as $c_2 \geq 117$. Therefore, the codimension of $\Delta_i$ in $\text{Hilb}^{c_2}(X) $ is $\geq i$. This implies $\text{dim}(\mathcal N) \leq 3c_2 -117$.

\end{proof}

\begin{lemma}[\cite{Simpson11}, Corollary $3.1$]
Every irreducible component of $\mathcal W^{0}_{2}$ has dimension $\geq 3c_2 - h^0(X, \mathcal O_X(5)\otimes \omega_X)-1$.\\
\end{lemma}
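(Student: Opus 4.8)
The plan is to identify $\mathcal W^0_2$ with the determinantal Brill-Noether locus produced in Theorem~\ref{T4} (with $r=2$, $c_1 = 5H$ and $k=0$) and to read the asserted inequality off the general lower bound on the dimensions of its irreducible components, after carrying out the relevant numerical computation. By Theorem~\ref{T4}, every non-empty irreducible component of $\mathcal W^0_2$ has dimension at least the generalized Brill-Noether number $\rho^0_2 = \dim(\mathcal M(c_2)) - (0+1)\bigl((0+1) - \chi(\mathcal E)\bigr)$, where $\chi(\mathcal E)$ is the Euler characteristic common to all members of $\mathcal M(c_2)$. Thus the proof reduces to checking that this number equals $3c_2 - h^0(X, \mathcal O_X(5)\otimes \omega_X) - 1$.

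First I would compute $\chi(\mathcal E)$ from Riemann-Roch. Using $c_1(\mathcal E) = 5H$, $\omega_X \cong \mathcal O_X(2)$, $H^2 = 6$ and $\chi(\mathcal O_X) = 11$, one gets $\chi(\mathcal E) = 2\chi(\mathcal O_X) + \frac{1}{2}\bigl(c_1^2(\mathcal E) - c_1(\mathcal E)\cdot c_1(\omega_X)\bigr) - c_2 = 22 + 45 - c_2 = 67 - c_2$. Next, since $\mathcal M(c_2)$ is good for $c_2 \geq 117$ (Remark~\ref{r6.1}), its actual dimension is the expected one, $\dim \mathcal M(c_2) = 4c_2 - 183$. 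Substituting gives $\rho^0_2 = (4c_2 - 183) - \bigl(1 - (67 - c_2)\bigr) = 3c_2 - 117$. Finally, $\omega_X \cong \mathcal O_X(2)$ yields $\mathcal O_X(5)\otimes \omega_X \cong \mathcal O_X(7)$, and $h^0(X, \mathcal O_X(7)) = 116$, so $3c_2 - h^0(X, \mathcal O_X(5)\otimes \omega_X) - 1 = 3c_2 - 117 = \rho^0_2$, exactly the claimed bound.

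Alternatively, in the spirit of the preceding lemma, one can argue directly with the incidence variety $\mathcal N$ of pairs $(\mathcal E, s)$ with $s \in \mathbb P(H^0(X,\mathcal E))$. A general member of a fixed component of $\mathcal W^0_2$ has $h^0(\mathcal E) = 1$, so over that component the projection $p_1 : \mathcal N \to \mathcal W^0_2$ is generically injective, and each such $\mathcal E$ sits in an extension $0 \to \mathcal O_X \to \mathcal E \to \mathcal J_Z \otimes \mathcal O_X(5) \to 0$ with $[Z] \in \text{Hilb}^{c_2}(X)^{\text{l.c.i}}$ and extension class in $\mathbb P\bigl(\text{Ext}^1(\mathcal J_Z \otimes \mathcal O_X(5), \mathcal O_X)\bigr) \cong \mathbb P\bigl((H^1(X, \mathcal J_Z \otimes \mathcal O_X(7)))^*\bigr)$. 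Since $\text{Hilb}^{c_2}(X)$ is smooth of dimension $2c_2$ and the generic fiber of the extension family is a projective space of dimension $h^1(\mathcal J_Z \otimes \mathcal O_X(7)) - 1 = c_2 - 116 - 1$, the generic dimension of $\mathcal N$ is again $2c_2 + (c_2 - 117) = 3c_2 - 117$.

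The hard part is the componentwise nature of the statement. The incidence-variety count only controls the dimension at a generic point, whereas $h^1(\mathcal J_Z \otimes \mathcal O_X(7))$ (equivalently $\dim \text{Ext}^1$) jumps up on special loci of $\text{Hilb}^{c_2}(X)$, so a priori one cannot exclude that some component of $\mathcal W^0_2$ is swept out entirely by such special subschemes and has strictly smaller dimension. What enforces the lower bound uniformly over all components is the determinantal structure: $\mathcal W^0_2$ is cut out locally as a rank-degeneracy locus of a morphism of vector bundles (the map $\gamma$ of Theorem~\ref{T4}), and every irreducible component of such a locus has codimension at most the expected value $(1)\bigl(1 - \chi(\mathcal E)\bigr) = c_2 - 66$. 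Hence the cleanest route is to deduce the componentwise inequality from Theorem~\ref{T4}, using the incidence-variety computation only to motivate the numerical value.
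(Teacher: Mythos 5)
Your proposal is correct, but note that the paper does not actually prove this lemma at all: it is quoted verbatim from Mestrano--Simpson (\cite{Simpson11}, Corollary $3.1$, adapted from the quintic to the sextic), so there is no in-paper argument to match. What you supply is a legitimate self-contained derivation from material already in the paper: Theorem \ref{T4} asserts precisely that every non-empty irreducible component of $\mathcal W^k_{r,H}$ has dimension at least $\dim(\mathcal M_H)-(k+1)(k+1-\chi)$, and your numerics are all right ($\chi(\mathcal E)=67-c_2$, $\dim\mathcal M(c_2)=4c_2-183$, $h^0(\mathcal O_X(7))=116$, giving $3c_2-117$ on both sides). Your diagnosis of the componentwise subtlety is also the correct one: the incidence-variety count in the preceding lemma only controls the dimension over general $Z$, and it is the determinantal (degeneracy-locus) structure that forces the codimension of \emph{every} component to be at most $(1)(1-\chi)=c_2-66$; this is exactly why the paper's Theorem \ref{T4} is the right tool, and it is also morally the same mechanism as in the cited source, where the bound comes from the deformation theory of the pair $(\mathcal E,s)$ (equivalently, of the coherent system), whose virtual dimension is again $\mathrm{ext}^1-\mathrm{ext}^2=3c_2-h^0(\mathcal O_X(5)\otimes\omega_X)-1$. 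The only point worth making explicit is that Theorem \ref{T4} is stated with ``$\dim(\mathcal M_H)$'', which if $\mathcal M_H$ were reducible with components of unequal dimension should be read as the local dimension at the component in question; since the local dimension of a moduli space of sheaves on a surface is always at least the expected dimension $4c_2-183$, the bound $3c_2-117$ survives in all cases, so this does not affect your conclusion.
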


Since, $h^0(X, \mathcal O_X(5)\otimes \omega_X) =116$, we have for $c_2 \geq 117$ every irreducible component of $\mathcal W^{0}_{2}$ has dimension exactly $3c_2 -117$. On the other hand as we have already seen in remark \ref{r6.1} that the Brill-Noether number is $\rho^{0}_{2} = 3c_2 -117$.\\

We summarize the above discussion as the following theorem :\\

\begin{theorem}
If $c_2 \geq 117$, then the Brill-Noether locus $\mathcal W^{0}_{2} \subset \mathcal M(2;5H.c_2)$ is non-empty and every irreducible component of it has the actual dimension same as expected dimension.\\
\end{theorem}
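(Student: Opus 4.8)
The plan is to split the statement into two independent assertions --- non-emptiness of $\mathcal{W}^0_2$ and the coincidence of every irreducible component's dimension with the expected value $\rho^0_2 = 3c_2 - 117$ --- and then to obtain the dimension claim by squeezing a global upper bound against a componentwise lower bound.

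For non-emptiness I would use the explicit construction set up just above. Choosing a general $Z \in \mathrm{Hilb}^{c_2}(X)^{\mathrm{l.c.i}}$ and forming the extension
\[
0 \to \mathcal{O}_X \to \mathcal{F} \to \mathcal{J}_Z \otimes \mathcal{O}_X(5) \to 0,
\]
the inclusion of $\mathcal{O}_X$ forces $h^0(\mathcal{F}) \geq 1$, so the only points needing verification are that $\mathcal{F}$ is locally free and $\mu_H$-stable. Local freeness is exactly where Cayley--Bacharach enters: with $L = \mathcal{O}_X$ and $M = \mathcal{O}_X(5)$ one has $L^* \otimes M \otimes \omega_X \cong \mathcal{O}_X(7)$, and since $h^0(X,\mathcal{O}_X(7)) = 116 < c_2$ a general $Z$ imposes independent conditions and hence satisfies $\mathrm{CB}(7)$; Theorem \ref{CB} then yields a locally free $\mathcal{F}$. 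For stability I would argue that a destabilizing subsheaf, saturated to a line bundle $\mathcal{O}_X(s)$ (legitimate since $\mathrm{Pic}(X) = \mathbb{Z}H$), would have $s \geq 3$ and, after composing with the surjection onto $\mathcal{J}_Z \otimes \mathcal{O}_X(5)$, would produce a nonzero section of $\mathcal{J}_Z \otimes \mathcal{O}_X(5-s)$ with $5-s \leq 2$; this is impossible for a general $Z$ of length $\geq 117$ because $h^0(X,\mathcal{O}_X(2)) = 10 < c_2$. Hence $\mathcal{F}$ has no subsheaf of slope $\geq 15$ and is stable (equivalently, semistability upgrades to stability by Remark \ref{r4.1}), so $\mathcal{F} \in \mathcal{W}^0_2$.

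For the dimension statement I would invoke the two bounds proved immediately above the theorem. The upper bound gives $\mathrm{dim}(\mathcal{W}^0_2) \leq 3c_2 - 117$, while the Simpson-type lower bound gives that every irreducible component has dimension $\geq 3c_2 - h^0(X, \mathcal{O}_X(5) \otimes \omega_X) - 1 = 3c_2 - 117$, using $\mathcal{O}_X(5) \otimes \omega_X \cong \mathcal{O}_X(7)$ and $h^0(X,\mathcal{O}_X(7)) = 116$. Combining the two, every component has dimension exactly $3c_2 - 117$, which is the generalized Brill--Noether number $\rho^0_2$, completing the argument.

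I expect the genuine work to sit inside the global upper bound rather than in the final assembly. The delicate point is that the fiber dimension of the projection $p_2 \colon \mathcal{N} \to \mathrm{Hilb}^{c_2}(X)$ equals $h^1(X, \mathcal{J}_Z \otimes \mathcal{O}_X(7)) - 1$, which jumps upward along the loci $\Delta_i = \{Z : h^0(\mathcal{J}_Z \otimes \mathcal{O}_X(7)) \geq i\}$; to keep $\mathrm{dim}(\mathcal{N})$ at $3c_2 - 117$ one must show $\Delta_i$ has codimension at least $i$ in $\mathrm{Hilb}^{c_2}(X)$, which is the role of the incidence variety $T = \{(C,Z) : Z \subset C\}$ together with the estimate $\mathrm{dim}(\pi_2^{-1}(C)) \leq c_2$. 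Only once the upper bound holds globally (not merely over the generic $Z$) does the pinch against the componentwise lower bound force \emph{every} component, including those lying over special loci, to have the expected dimension.
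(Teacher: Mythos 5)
Your proposal is correct and follows essentially the same route as the paper: non-emptiness via the extension $0 \to \mathcal O_X \to \mathcal F \to \mathcal J_Z \otimes \mathcal O_X(5) \to 0$ with local freeness from $\mathrm{CB}(7)$ (using $h^0(X,\mathcal O_X(7))=116 < c_2$), and the dimension claim by pinching the incidence-variety upper bound $\dim(\mathcal W^0_2) \leq 3c_2-117$ (including the $\Delta_i$ stratification, which you correctly identify as the delicate step) against the Simpson-type componentwise lower bound. The only difference is that you actually write out the $\mu_H$-stability of $\mathcal F$ (saturating a destabilizer to $\mathcal O_X(s)$ with $s \geq 3$ and killing both the map to $\mathcal O_X$ and the section of $\mathcal J_Z\otimes\mathcal O_X(5-s)$), a step the paper merely asserts; your argument for it is sound.
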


We end this section with the following remark.\\

\begin{remark}
Similar calculations (with minor changes) as that of Lemma $5.1$, \cite{S} enables us to obtain a lower bound for $c_2$ such that $\overline{\mathcal W^0_2} \subset \overline{\mathcal M(c_2)}$ is connected.
\end{remark}

\section{Ulrich bundles on $X$}\label{S6}

This final section is divided into two subsections. In the first subsection we study the possibility of existence of rank $2$ weakly Ulrich bundles on $X$ and in the second subsection we study the implications of the existence of rank $2$ Ulrich bundles on $X$.\\

\subsection{Existence of weakly Ulrich bundles and non-emptiness of certain Brill-Noether Loci}
We begin this section with a Proposition which gives a list of sufficient  conditions on a line bundle $L$ on $C \in \mathcal L_5$ of degree atleast $91$  for the existence of a $\mu_H$ stable, rank  $2$ weakly Ulrich bundle on $X$. The following treatment closely follows \cite{Coskunconstruction}.\\

\begin{proposition}\label{P7}
Let $C \in \mathcal L_5$ be a smooth irreducible projective algebraic curve as described in section \ref{S3}. Let $L$ be a line bundle  of degree $\geq 91$ on $C$ satisfying the following properties :\\

$(i)$ $L$ is base-point free, i.e globally generated,\

$(ii)$ $h^0(L) = 2$,\

$(iii)$ $h^0(L(-1)) = 0$,\

$(iv)$ $h^0((L^*)^2(5)) =0 $,

$(v)$ The maps $H^0(L) \otimes H^0(\mathcal O_X(m))  \to H^0(L(m))$ for $m=1,2$ is surjective.

Then the rank $2$ vector bundle $\mathcal E$ constructed from the sequence
\begin{equation}\label{e7.1}
0 \to \mathcal E^* \to H^0(L)\otimes \mathcal O_X \to i_*L \to 0 
\end{equation}

satisfies the following properties :

$(a)$ $c_1(\mathcal E) = 5H$, $c_2(\mathcal E) = \text{deg}(L)$,\

$(b)$ $\mathcal E$ is weakly Ulrich,\

$(c)$ $\mathcal E$ is globally generated in codimension $1$,\

$(d)$ $\mathcal E$ is simple and\

$(e)$ $\mathcal E$ is $\mu_H$ stable.

\end{proposition}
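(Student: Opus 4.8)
The plan is to read off each of (a)--(e) from the two short exact sequences attached to $\mathcal{E}$: the defining sequence \eqref{e7.1} and its dual
\begin{equation*}
0 \to \mathcal{O}_X^{\oplus 2} \to \mathcal{E} \to i_*(\mathcal{O}_C(C)\otimes L^*) \to 0,
\end{equation*}
obtained exactly as in \eqref{e6} by applying $\mathcal{E}xt^\bullet(-,\mathcal{O}_X)$ and identifying $\mathcal{E}xt^1(i_*L,\mathcal{O}_X)$ with $i_*(\mathcal{O}_C(C)\otimes L^*)$. Part (a) is then immediate and identical to the computation in Section \ref{S3}: since $\mathcal{E}^*$ is the kernel of the evaluation map of $L$, Proposition \ref{P2} with $\mathcal{F}=\mathcal{O}_X^{\oplus 2}$, $\mathcal{G}=L$, $\rho=1$ gives $\det(\mathcal{E}^*)=\mathcal{O}_X(-5H)$ and $c_2(\mathcal{E}^*)=\deg(L)$, whence $c_1(\mathcal{E})=5H$ and $c_2(\mathcal{E})=\deg(L)$, using that $c_2$ is invariant under dualizing a rank $2$ bundle.

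For stability (e) I would argue exactly as in the proof of Proposition \ref{4.1}: since $\deg(L)\geq 91\geq 61$, the same destabilization analysis shows $\mathcal{E}^*$, hence $\mathcal{E}$, is $\mu_H$-stable. Alternatively, once (b) is in hand stability drops out cleanly: $\mu_H(\mathcal{E})=\tfrac{5H\cdot H}{2}=15$, so a destabilizing rank $1$ subsheaf saturates to some $\mathcal{O}_X(sH)$ with $6s\geq 15$, i.e. $s\geq 3$, which forces $H^0(\mathcal{E}(-3))\neq 0$, contradicting the weakly Ulrich vanishing $H^0(\mathcal{E}(-m))=0$ for $m\geq 2$. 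Simplicity (d) I would extract from hypothesis (iv): applying $\mathrm{Hom}(\mathcal{E},-)$ to the dual sequence and using $\mathrm{Hom}(\mathcal{E},\mathcal{O}_X)=H^0(\mathcal{E}^*)=0$ (established in Section \ref{S3}) embeds $\mathrm{Hom}(\mathcal{E},\mathcal{E})$ into $\mathrm{Hom}(\mathcal{E},i_*(\mathcal{O}_C(C)\otimes L^*))\cong H^0(C,\mathcal{E}^*|_C\otimes\mathcal{O}_C(C)\otimes L^*)$; restricting the dual sequence to $C$ as in \eqref{e5.5} and twisting presents this last group as an extension of $H^0(\mathcal{O}_C)=\mathbb{C}$ by $H^0((L^*)^2(5))$, which vanishes by (iv), so $\mathrm{Hom}(\mathcal{E},\mathcal{E})=\mathbb{C}$. (This also follows from (e) since stable bundles are simple, but the above isolates the role of (iv).)

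The technical heart is (b). I would verify the conditions of Definition \ref{d24} by twisting \eqref{e7.1} by $\mathcal{O}_X(k+5)$ and using $\mathcal{E}^*\cong\mathcal{E}(-5H)$ to rewrite the relevant cohomology through the long exact sequence
\begin{equation*}
\cdots\to H^j(\mathcal{E}(k))\to H^0(L)\otimes H^j(\mathcal{O}_X(k+5))\to H^j(C,L(k+5))\to H^{j+1}(\mathcal{E}(k))\to\cdots,
\end{equation*}
together with the dual sequence for the negative twists and Serre duality pairing each $H^2(\mathcal{E}(k))$ with an $H^0(\mathcal{E}(-3-k))$. The cohomology of $\mathcal{O}_X(k+5)$ is known from Section \ref{S1}, and the line-bundle terms on $C$ are handled by Riemann--Roch together with (iii), which kills the boundary $H^0$-terms via $h^0(L(-1))=0$. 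The crucial point is hypothesis (v): the connecting maps here are precisely the multiplication maps $H^0(L)\otimes H^0(\mathcal{O}_X(m))\to H^0(L(m))$, and their surjectivity for $m=1,2$ is exactly what forces the intermediate cohomology to vanish in the required range. This is where I expect the main obstacle to lie, both because it is the most delicate bookkeeping step and because, as the author stresses, producing a line bundle $L$ genuinely satisfying all of (i)--(v) is only partially resolved. Finally, for (c) I would use the dual sequence: off $C$ the inclusion $\mathcal{O}_X^{\oplus 2}\hookrightarrow\mathcal{E}$ is an isomorphism, so $\mathcal{E}$ is globally generated on $X\setminus C$, while the surjection $H^0(\mathcal{E})\twoheadrightarrow H^0(i_*(\mathcal{O}_C(C)\otimes L^*))$ (coming from $H^1(\mathcal{O}_X)=0$) together with base-point-freeness of $L$ (hypothesis (i)) shows that the remaining generation along $C$ fails only at finitely many points, so the non-generation locus has codimension $\geq 2$.
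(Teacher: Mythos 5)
Your proposal is correct and follows essentially the same route as the paper's proof: (a) via Proposition \ref{P2}; (b) by reducing, through $\mathcal E^*\cong\mathcal E(-5)$ and Serre duality, to $H^0(\mathcal E(m))=0$ for $m\le -2$ and $H^1(\mathcal E(m))=0$ for $m\ge 0$, and identifying the latter groups with cokernels of the multiplication maps in hypothesis (v) (with (iii) killing the higher twists); (c) and (d) from the dual sequence and its restriction to $C$, with (iv) entering exactly as in the paper; and (e) by the destabilization analysis of Section \ref{S3} (your alternative argument via the weakly Ulrich vanishings is also fine and is in fact closer to what the paper writes). The only small slip is in (c): the finiteness of the non-generation locus along $C$ comes from the base locus of the quotient line bundle $\mathcal O_C(C)\otimes L^*$ being finite, which is what the paper invokes, not from base-point-freeness of $L$ itself.
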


\begin{proof}
From the Chern class computations on the short exact sequence \eqref{e7.1} (using Proposition \ref{P2}) we have $c_1(\mathcal E) = 5H$ and $c_2(\mathcal E) = \text{deg}(L)$ and hence the condition $(a)$ is satisfied.\

Since $\mathcal E$ is of rank $2$ and determinant $\mathcal O_X(5)$, we have $\mathcal E^*(m+5) \cong \mathcal E(m), \forall m \in \mathbb Z$. From Serre duality we obtain:
\begin{center}
$H^i(\mathcal E(m)) \cong H^i(\mathcal E^*(m+5)) \cong H^{2-i}(\mathcal E(-m-3))^*$
\end{center}
Therefore in order to show that $\mathcal E$ is weakly Ulrich it's sufficient to show the vanishing of $H^0(\mathcal E(m))$ for all $m \leq -2$ and of $H^1(\mathcal E(m))$ for all $m \geq 0$.
Dualizing the short exact sequence \eqref{e7.1} we obtain 
\begin{equation}\label{e7.2}
0 \to H^0(L)^* \otimes \mathcal O_X \to \mathcal E \to \mathcal O_C(C) \otimes L^* \to 0.
\end{equation}
Twisting the short exact sequence \eqref{e7.2} by $\mathcal O_X(-2)$ and taking long exact cohomology sequence we obtain $H^0(\mathcal E(-2)) \cong H^0(L^*(3))$. Since $\text{deg}(L^*(3)) = 90 -\text{deg}(L) <0$, we have $H^0(L^*(3))=0$ and hence $H^0(\mathcal E(-2)) =0$. It can be easily seen that the same technique yields $H^0(\mathcal E(m))=0 , \forall m <-2$.\

Twisting the short exact sequence \eqref{e7.1} by $\mathcal O_X(-m+2)$ and taking global sections we obtain the following :
\begin{align*}
& 0 \to H^0(\mathcal E^*(-m+2)) \to H^0(L)\otimes H^0(\mathcal O_X(-m+2) \to H^0(L(-m+2)) \\
& \to H^1(\mathcal E^*(-m+2)) \to 0.
\end{align*}
Noting that $H^0(\mathcal E^*(-m+2)) =0$ for $m \geq -1$ and $H^1(\mathcal E^*(-m+2)) \cong H^1(\mathcal E(m))^*$ we obtain the following short exact sequence for $m \geq -1$:
\begin{align*}
0 \to H^0(L) \otimes H^0(\mathcal O_X(-m+2)) \to H^0(L(-m+2)) \to H^1(\mathcal E(m))^* \to 0.
\end{align*}
This means $H^1(\mathcal E(m))^* \cong \text{coker}(H^0(L) \otimes H^0(\mathcal O_X(-m+2)) \to H^0(L(-m+2)))$. We see that from assumption $(v)$ we have for $m=0,1,2$ the map  $H^0(L) \otimes H^0(\mathcal O_X(-m+2)) \to H^0(L(-m+2)))$ is an isomorphism and for those values of $m$ we have $H^1(\mathcal E(m))=0$. For $m >2$ we have from the assumption $(iii)$ that $H^0(L(-t)) =0, \forall t >0$ and hence $H^1(\mathcal E(m))=0, \forall m >2$. Therefore the condition $(b)$ is satisfied.\

From the short exact sequence \eqref{e7.2} noting that $H^0(L)^* \hookrightarrow H^0(\mathcal E)$, we have $\mathcal E$ is globally generated away from the base locus of $\mathcal O_C(C) \otimes L^*$, i.e  it is globally generated in codimension $1$. Hence condition $(c)$ is satisfied.\

Tensoring the short exact sequence \eqref{e7.2} with $\mathcal E^*$ and taking long exact cohomology sequence we obtain the isomorphism $H^0(\mathcal E \otimes \mathcal E^*) \cong H^0(\mathcal E^*|_C \otimes \mathcal O_C(C) \otimes L^*)$. Now restricting the short exact sequence \eqref{e7.1} to the curve $C$ we have the following short exact sequence :
\begin{equation}\label{e7.3}
0 \to \mathcal O_C(-C) \otimes L \to \mathcal E^{*}|_C \to  H^0(L) \otimes \mathcal O_C \to L\to 0
\end{equation}
 
which induces the another short exact sequence on $C$ given by:
\begin{equation}\label{e7.4}
0 \to \mathcal O_C(-C) \otimes L \to \mathcal E^{*}|_C \to L^* \to 0
\end{equation}
Tensoring \eqref{e7.4} by $\mathcal O_C(C) \otimes L^{*}$ and taking long exact cohomology sequence we have :
\begin{align*}
0 & \to H^0(\mathcal O_C ) \to H^0(\mathcal E^*|_C \otimes \mathcal O_C(C) \otimes L^*) \to H^0((L^{*})^{2}(5)) \to 0
\end{align*}
From assumption $(iv)$, we have the following isomorphism $H^0(\mathcal O_C ) \cong H^0(\mathcal E^*|_C \otimes \mathcal O_C(C) \otimes L^*)$. This shows $\mathcal E$ is simple and hence the condition $(d)$ is satisfied.\

In order to show $\mathcal E$ is $\mu_H$ stable it's enough to show that $\mathcal E^*$ is so. Let if possible $\mathcal E^*$ is not $\mu_H$ stable, then arguing as in section \ref{S3}, one can show that it can be only destabilized by either $\mathcal O_X(-2)$ or $\mathcal O_X(-1)$, a contradiction as $H^0(\mathcal E^*\otimes \mathcal O_X(2)) \cong H^2(\mathcal E)^* =0$ and $H^0(\mathcal E^* \otimes \mathcal O_X(1)) \cong H^2(\mathcal E \otimes \mathcal O_X(1))^* =0$. Hence condition $(e)$ is satisfied.\\

\end{proof}

\begin{remark}\label{R7.2}
Here we mention that even though property $(d)$ follows from property $(e)$ and therefore condition $(iv)$ is redundant, we have given an independent proof for simplicity.\\
\end{remark}

Next we make an attempt to investigate whether the conditions $(i)$, $(ii)$, $(iii)$ and $(v)$ of Proposition \ref{P7} really hold.\\

By Theorem \ref{T2.15}, we have for $ 91 \leq d \leq 107$ each irreducible component of $W^1_d(C)$ has dimension atleast $\rho(g,r,d) \geq 74$. From Lemma \ref{L2.16}, we obtain that each such component has a non-empty Zariski open subset whose members satisfy condition $(ii)$ of the previous Proposition.\

Note that, if $91 \leq d \leq 107$, then for all $L \in \text{Pic}^d(C)$, one has $61 \leq \text{deg}(L(-1)) \leq 77$. Since $\text{dim}(W^0_e(C)) =e$, the general member  of $W^1_d(C)$ satisfies condition $(iii)$.\

We know that the image of the addition map $C \times W^1_{d-1}(C) \to W^1_d(C)$ given by $(p, L) \mapsto L(p)$, parametrizes all the members of $W^1_d(C)$ which are not base point free. Therefore, if the dimension of  the image of the above map is atmost $\rho(g,r,d)-1$, then the general member  of $W^1_d(C)$ satisfies condition $(i)$ of Proposition \ref{P7}. From cohomological facts listed in the preliminaries, we have the restriction map $H^0(\mathcal O_{\mathbb P^3}(5)) \to H^0(\mathcal O_X(5))$ is an isomorphism and hence $C$ can be thought of as $X \cap Y$, where $Y\subset \mathbb P^3$ is a smooth quintic hypersurface. Therefore, we have the information regarding maximum number of collinear points in $C$ (say, $l$). At this point we note down a theorem which gives us gonality and Clifford index of the curve $C$ using $l$.\\

\begin{theorem}\label{T7.3}
Let $C$ be a smooth, nondegenerate complete intersection curve in $\mathbb P^3$. Let $l$ be the maximum number of collinear points on $C$. Then :\\
$(i)$ $\text{gon}(C) = \text{deg}(C) -l$, and an effective divisor $\Gamma \subset C$ computes this gonality if and only if $\Gamma$ is residual, in a plane section of $C$, to a set of $l$ collinear points of $C$.\\
$(ii)$ If $\text{deg}(C) \neq 9$, then $\text{Cliff}(C) = \text{gon}(C) -2$.
\end{theorem}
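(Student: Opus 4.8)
The plan is to prove part $(i)$ by producing matching upper and lower bounds for $\mathrm{gon}(C)$ together with the divisor characterisation, and then to obtain part $(ii)$ from $(i)$ by feeding the gonality value into the general inequalities relating gonality and Clifford index and isolating the exceptional degree $9$. Throughout write $C=S_a\cap S_b$ as a complete intersection of surfaces of degrees $a\le b$, so that $\deg(C)=ab$ and, since $K_{\mathbb P^3}=\mathcal O(-4)$, adjunction gives $\omega_C\cong\mathcal O_C(a+b-4)$.

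For the upper bound in $(i)$ I would argue purely geometrically. Let $\ell\subset\mathbb P^3$ be a line meeting $C$ in a divisor $D_0$ of maximal degree $l$, and consider the pencil of planes through $\ell$. Since $\ell\subset H$ for every such plane $H$, the plane section $H\cap C$ always contains $D_0$, so $H\cap C=D_0+\Gamma_H$ with $\deg\Gamma_H=\deg(C)-l$. As $H$ varies in the pencil the residual divisors $\Gamma_H$ sweep out a base-point-free $g^1_{\deg(C)-l}$ on $C$; hence $\mathrm{gon}(C)\le\deg(C)-l$, and any $\Gamma$ residual to a set of $l$ collinear points does compute this bound.

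The substance of $(i)$ is the reverse inequality $\mathrm{gon}(C)\ge\deg(C)-l$ together with the converse characterisation, and this is where I expect the main difficulty. I would take a base-point-free pencil $A$ computing the gonality and study it through its residual series $|K_C-A|$ in the canonical system; since $C$ is a complete intersection it is arithmetically Cohen--Macaulay, hence projectively normal, so $K_C=\mathcal O_C(a+b-4)$ and the canonical series is cut out by surfaces of degree $a+b-4$. Projective normality lets one interpret the speciality of a general fibre $D\in|A|$ in terms of surfaces of fixed degree through $D$. A uniform-position argument for the $\deg(C)$ points of a general plane section, combined with the Cayley--Bacharach property enjoyed by a complete intersection, then forces the points of $C$ complementary to $D$ in a plane section to be collinear; maximality of $l$ gives $\deg A\ge\deg(C)-l$ and simultaneously identifies the computing divisors as the residuals, inside a plane section, of $l$ collinear points. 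The genuinely delicate step is this last implication, \emph{minimal pencil} $\Rightarrow$ \emph{residual to a collinear set}, which rests on the projective normality of $C$ and a Castelnuovo-type general-position estimate rather than on the curve-intrinsic bounds (Martens, Mumford, Keem) recalled in Section~\ref{S1}.

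For part $(ii)$ I would invoke the classical sandwich $\mathrm{gon}(C)-3\le\mathrm{Cliff}(C)\le\mathrm{gon}(C)-2$. The upper bound is immediate, since a gonality pencil $A$ satisfies $\mathrm{Cliff}(A)=\deg A-2(h^0(A)-1)=\mathrm{gon}(C)-2$ and contributes to the Clifford index. It therefore remains only to exclude the value $\mathrm{Cliff}(C)=\mathrm{gon}(C)-3$, which would require a line bundle $L$ with $h^0(L)\ge 3$, $h^1(L)\ge 2$ and $\mathrm{Cliff}(L)=\mathrm{gon}(C)-3$, i.e.\ an exceptional configuration. Appealing to the classification of curves whose Clifford index drops strictly below $\mathrm{gon}(C)-2$ and combining it with the gonality value computed in $(i)$, one checks that such an $L$ cannot occur on a complete intersection curve in $\mathbb P^3$ unless $\deg(C)=9$ (the type $(3,3)$ case), which is precisely the excluded degree. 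This exclusion, and not the easy upper bound, is where the hypothesis $\deg(C)\neq 9$ is consumed.
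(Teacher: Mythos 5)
The paper does not prove this statement at all: it is imported verbatim from Basili (Th\'eor\`emes $4.2$ and $4.3$ of \cite{Basili}), so there is no internal argument to measure yours against. Judged on its own terms, your proposal gets the easy halves right: the pencil of planes through a maximal $l$-secant line does cut out a $g^1_{\deg(C)-l}$, giving $\text{gon}(C)\le\deg(C)-l$, and the sandwich $\text{gon}(C)-3\le\text{Cliff}(C)\le\text{gon}(C)-2$ is the correct frame for part $(ii)$. But both of the steps that carry the actual content of the theorem are asserted rather than proved.

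For $(i)$, the sentence ``a uniform-position argument \dots\ combined with the Cayley--Bacharach property \dots\ then forces the points of $C$ complementary to $D$ in a plane section to be collinear'' presupposes that a general divisor $D$ of a minimal pencil is contained in a plane section of $C$ in the first place; that containment is itself the heart of the theorem and does not follow from projective normality alone. One has to show that $D$ fails to impose independent conditions on a suitable system of surfaces and then run a liaison/Koszul argument on the resolution of $\mathcal J_C$ to locate $D$ inside a plane section with collinear residual --- none of this is carried out, and it is exactly the content of Basili's Th\'eor\`eme $4.2$. For $(ii)$, ``appealing to the classification of curves whose Clifford index drops strictly below $\text{gon}(C)-2$'' begs the question: no such off-the-shelf classification exists in the generality you need, and the exclusion of $\text{Cliff}(C)=\text{gon}(C)-3$ for $\deg(C)\neq 9$ is precisely Basili's Th\'eor\`eme $4.3$. (Your identification of the exception is correct: degree $9$ forces type $(3,3)$, where $\mathcal O_C(1)$ has $h^0=4$ and Clifford index $9-6=3=\text{gon}(C)-3$.) In short, you have located the difficulties accurately, but as written the proposal is an outline of Basili's proof with the two hard implications left as black boxes.
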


\begin{proof}
For part $(i)$ see \cite{Basili}, Th\'{e}or\`eme $4.2$. For part $(ii)$ see \cite{Basili}, Th\'{e}or\`eme $4.3$.\\
\end{proof}

Having these numerical invariants of the complete intersection curve $C$ at hand,  we now mention another interesting thoerem on $K3$ surface which makes use of the numerical inavriants obtained from the Theorem \ref{T7.3} to give a useful dimension estimate result in this context. Note that, the following theorem is on $k3$ surface and the curve mentioned in the theorem is a general curve.\\

\begin{theorem}
Let $C$ be a smooth curve on a $K3$ surface with $\text{gon}(C) =k$, $\text{Cliff}(C) =k-2$ and $\rho(g,1,k) \leq 0$ such that the linear system $|C|$ is base-point free. If $d \leq g-k+2$, then every irreducible component of $W^1_d(C)$ has dimension atmost $d-k$.
\end{theorem}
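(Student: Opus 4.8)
The plan is to bound a fixed irreducible component $V\subseteq W^1_d(C)$ by passing to the associated rank-two Lazarsfeld--Mukai bundles on the $K3$ surface $S$, using the kernel-of-evaluation construction already exploited in \eqref{e7.1}--\eqref{e7.2} and governed by Proposition \ref{P2}, and then to show that every pencil parametrised by $V$ is residual to a \emph{rigid} gonality pencil together with an effective divisor of degree $d-k$. Note first that $\rho(g,1,k)=2k-g-2\le 0$ is exactly the statement that $k\le g-k+2$, so the range $k\le d\le g-k+2$ is nonempty and contains the gonality $d=k$ as its smallest value.

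I would begin by reducing to base-point-free pencils and setting up a descending induction on $d$. If the general $A\in V$ has a base point, write $A\cong A'(p)$ with $A'$ base-point-free; then the addition map $C\times W^1_{d-1}(C)\to W^1_d(C)$ dominates $V$, whence $\dim V\le 1+\dim W^1_{d-1}(C)\le 1+(d-1-k)=d-k$ by the inductive hypothesis (valid since $d-1\le g-k+2$). A minimal pencil ($d=k$) is automatically base-point-free, so the base case is handled directly by the bundle argument below. Thus it suffices to treat a component whose general member is a base-point-free complete $g^1_d$ with $h^0(A)=2$. For such an $A$ the evaluation map $H^0(A)\otimes\mathcal O_S\to i_*A$ is surjective with locally free kernel $E^*$, and Proposition \ref{P2} gives $c_1(E)=[C]$, $c_2(E)=d$; since $\omega_S\cong\mathcal O_S$ one has $\omega_C\cong\mathcal O_C(C)$, the standard properties of Lazarsfeld--Mukai bundles give $h^1(E)=h^2(E)=0$ and $h^0(E)=\chi(E)=g+3-d$, and dualising produces the extension $0\to H^0(A)^*\otimes\mathcal O_S\to E\to \mathcal O_C(C)\otimes A^*\to 0$ exactly as in \eqref{e7.2}.

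The heart of the argument is the behaviour of $E$ under $\mu$-stability with respect to $[C]$. If $E$ fails to be $\mu$-semistable, a destabilising sub-line-bundle $M\hookrightarrow E$ restricts on $C$ to a pencil of Clifford index at most $\mathrm{Cliff}(C)$; since $\mathrm{Cliff}(C)=\mathrm{gon}(C)-2=k-2$ this restriction must \emph{compute} the Clifford index, and $\rho(g,1,k)\le 0$ together with Lazarsfeld's rigidity theorem for minimal pencils on a $K3$ forces the associated $g^1_k$ to be cut out by a fixed line bundle on $S$. Because $|C|$ is base-point-free and $\mathrm{NS}(S)$ is discrete, only finitely many numerical classes of $M$ can occur, so the gonality pencil $A_0$ is rigid; then every $A\in V$ differs from $A_0$ by an effective divisor of degree $d-k$, and the finite-to-one map $A\mapsto [A]-[A_0]$ lands in the effective locus $W^0_{d-k}(C)$ of dimension $\le d-k$, giving $\dim V\le d-k$. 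When $E$ is $\mu$-stable it is simple, hence moves in a moduli space of sheaves on $S$ of dimension $\dim \mathrm{Ext}^1(E,E)=4c_2(E)-c_1(E)^2-6=4d-2g-4$ by Mukai's formula; recovering the fixed curve $C$ imposes the condition that the wedge of the distinguished pencil $\mathbb P(H^0(A)^*)\subseteq\mathbb P H^0(E)$ equal the single point $[C]\in|C|$, and carrying out this fibre-dimension bookkeeping collapses the estimate to the same bound $\dim V\le d-k$.

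The main obstacle is precisely this stability analysis: one must enumerate all possible destabilising sub-line-bundles $M\subset E$ and prove, using $\mathrm{Cliff}(C)=k-2$, $\rho(g,1,k)\le 0$ and the structure of $\mathrm{NS}(S)$, that each one forces the underlying pencil to be residual to the rigid gonality $g^1_k$. Once this rigidity is secured the bound $\dim V\le d-k$ follows transparently from the $(d-k)$-dimensional choice of the residual divisor, the remaining input being Riemann--Roch bookkeeping on $S$ and $C$ of exactly the type already carried out in Sections \ref{S3}--\ref{S6}.
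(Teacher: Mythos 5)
First, a point of comparison: the paper does not prove this statement at all --- it is quoted verbatim from Aprodu--Farkas and the ``proof'' is the citation \cite{AF}, Theorem $3.2$. So there is no internal argument to measure your proposal against; judged on its own terms, your sketch adopts the right framework (Lazarsfeld--Mukai bundles on the $K3$ surface plus a dichotomy on their stability/simplicity, which is indeed the method of the cited source), but each branch of the dichotomy has a genuine unproved crux. In the non-simple branch, the decisive sentence is ``then every $A\in V$ differs from $A_0$ by an effective divisor of degree $d-k$.'' Nothing you have established implies $h^0(C,A\otimes A_0^{-1})>0$: even granting that the minimal pencil is rigid and cut out by a line bundle on $S$, a general $g^1_d$ on $C$ need not contain the gonality pencil, and forcing this containment is precisely the kind of statement the hypotheses $\mathrm{Cliff}(C)=k-2$ and $\rho(g,1,k)\le 0$ are there to deliver --- it requires an actual argument (an extension structure $0\to M\to E\to N\otimes\mathcal J_\xi\to 0$ for the Lazarsfeld--Mukai bundle together with a Clifford-index comparison of $M|_C$ and $N|_C$, or a base-point-free pencil trick). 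The auxiliary remark that $\mathrm{NS}(S)$ is discrete does not by itself yield finiteness of the possible destabilising classes; that needs the slope inequality combined with effectivity and the constraint $c_2(E)=d$.

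The gap in the simple/stable branch is sharper. Your count $\dim\mathrm{Ext}^1(E,E)=4d-2g-4$ plus the Grassmannian fibre $\dim G(2,H^0(E))=2(g-d+1)$ bounds the space of \emph{pairs} $(C',A')$ with $C'\in|C|$ by $2d-2$; subtracting $\dim|C|=g$ gives $\rho(g,1,d)=2d-g-2\le d-k$ only for the fibre over a \emph{general} member of $|C|$. Over the fixed curve $C$ the fibre can a priori be larger --- this is exactly the classical obstruction in Lazarsfeld's Brill--Noether argument for $K3$ sections, and overcoming it for the specific curve $C$ is the entire content of Theorem $3.2$ of \cite{AF}. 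The phrase ``carrying out this fibre-dimension bookkeeping collapses the estimate'' therefore defers precisely the step that constitutes the theorem. In short: right scaffolding, but neither case is closed, and the two deferred steps are not routine.
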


\begin{proof}
See \cite{AF}, Theorem $3.2$.\\
\end{proof}

 It would be interesting to obtain an analogous result for general curves on  very general surfaces of degree $d \geq 5$ in $\mathbb P^3$ (We do not have such an analogous result at the moment, we are also unable to find such a result in the literature). Anyway if by such an analogous result or otherwise we are able to get a dimension estimate of $W^1_{d-1}(C)$ (and hence a dimension estimate of the image of the addition map), then there is a possibility that for some specified values of $d$ (between $91$ and $102$) we will be able to say that the general member  of $W^1_d(C)$ satisfies condition $(i)$ of Proposition \ref{P7}.\

Finally we mention that, if a line bundle $L$ satisfies all the conditions $(i),(ii),(iii)$, then condition $(v)$ is equivalent to the injectivity of $H^1(F(m)) \to H^1(L) \otimes H^1(\mathcal O_C(m))$, for $m=1,2$, where $F$ is the kernel of the evaluation map for $L$.\\

We summarize the above discussion in the following remarks.\\

\begin{remark}
$(i)$ Let  $C \in \mathcal L_5$ be a general smooth irreducible projective algebraic curve as mentioned in section \ref{S3}. Then for  $91 \leq d \leq 107$, there exists a line bundle of degree $d$ on $C$ (the general member of $W^1_d(C)$) satisfying conditions $(ii),(iii)$ of Proposition \ref{P7}. Moreover, If for some specified $d$ in that range  the general member of $W^1_d(C)$ is base-point free, then for those values of $d$ we have obtained a line bundle $L$ of degree $d$ on $C$ satisfying conditions $(i),(ii),(iii)$.  Finally, If for such an $L$ the maps $H^1(F(m)) \to H^1(L) \otimes H^1(\mathcal O_C(m))$, for $m=1,2$ as discussed before are injective, then for those values of $d$ we have found a line bundle $L$ of degree $d$ satisfying all the desired conditions (i.e conditions $(i),(ii),(iii),(v)$) of Proposition \ref{P7}.\\

$(ii)$ If there exists an weakly Ulrich bundle $\mathcal E$ of rank $2$ on $X$ such that $ c_1(\mathcal E) = 5H$ and $91 \leq c_2(\mathcal E) \leq 107$, then by  Remark \ref{r2.31} it's not an Ulrich bundle and therefore serves as an example of weakly Ulrich bundle that is not Ulrich.\\

$(iii)$ If there exists an weakly Ulrich bundle on $X$ which is constructed as in Proposition \ref{P7} (i.e from short exact sequence \eqref{e7.1}), then the corresponding Brill-Noether locus $\mathcal W^1_2$ is non-empty.  In Theorem \ref{T4.3} we have already shown that if $103 \leq c_2 \leq 107$, then $\mathcal W^{1}_{2} - \mathcal W^{2}_{2} \subset \mathcal M(2;5H,c_2)$ is non-empty. Now If we can show that for some specified values of $d$ with $91 \leq d \leq 102$, there exists a line bundle $L$ (on $C$) of degree $d$ satisfying conditions $(i),(ii)$ of proposition \ref{P7}, then we have for those values of $d$ the non-emptiness of the Brill-Noether locus $\mathcal W^1_2 \subset \mathcal M(2;5H,c_2)$ with $c_2 =d$.\\

\end{remark}

\subsection{Implications of the Existence of a rank $2$ Ulrich bundle on $X$}\

In this final subsection we explore what the existence of a rank $2$ Ulrich bundle  on $X$ means from the point of view of the non-emptiness of Brill-Noether locus and the existence of higher rank Ulrich bundles on $X$. We begin this section by recalling the results which give us the existence and stability of a rank $2$  Ulrich bundles on $X$.\\

\begin{proposition}\label{p7.7}
There exists a rank $2$ Ulrich bundle on $X$.
\end{proposition}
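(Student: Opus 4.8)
The plan is to exhibit a rank $2$ Ulrich bundle on $X$ explicitly, using the characterization in Proposition \ref{p2.36} together with the classical Brill--Noether theory already assembled. By Remark \ref{r2.31}, a rank $2$ bundle $\mathcal E$ on the very general sextic $X$ is Ulrich precisely when it is ACM with $c_1(\mathcal E)=5H$ and $c_2(\mathcal E)=55$; moreover, since $\omega_X\cong\mathcal O_X(2)$, one checks that $\det\mathcal E\cong\mathcal O_X(5)\cong\omega_X(3)=\omega_X(n+1)$ with $n=2$, so such a bundle is automatically \emph{special} in the sense of the definition preceding Proposition \ref{p2.36}. Hence the existence question reduces, via part $(b)$ of Proposition \ref{p2.36}, to producing a smooth curve $C\in|3H+\omega_X|=|5H|$ and a suitable line bundle $L$ on $C$ realizing the Mukai exact sequence.

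First I would run the numerics. For $C\in|5H|$ the genus is $g(C)=106$ as computed in section \ref{S3}, and the required degree of $L$ in Proposition \ref{p2.36}$(a)$ is $\tfrac12 H.(5H+3\omega_X)+2\chi(\mathcal O_X)$; with $H^3=6$, $\omega_X=\mathcal O_X(2)$ and $\chi(\mathcal O_X)=11$ this evaluates to $\tfrac12\cdot 6\cdot(5+6)+22=33+22=55$, matching the required $c_2=55$ from Remark \ref{r2.31}. So the plan is to find a base-point-free pencil $g^1_{55}$ on $C$ with $H^1(L(H+\omega_X))=H^1(L(3))=0$; then the bundle $\mathcal E$ defined by $0\to\mathcal E^*\to\mathcal O_X^2\xrightarrow{(\sigma_0,\sigma_1)}L\to 0$ is, by Proposition \ref{p2.36}$(a)$, a special rank $2$ Ulrich bundle, and its Chern classes $c_1(\mathcal E)=5H$, $c_2(\mathcal E)=\deg(L)=55$ follow from Proposition \ref{P2} exactly as in the computation in section \ref{S3}.

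The remaining work is to verify the two hypotheses on $L$. For base-point-freeness of a $g^1_{55}$, I would invoke the Brill--Noether machinery: the Brill--Noether number $\rho(106,1,55)=106-2\cdot(106+1-55)=106-104=2\geq 0$, so by Theorem \ref{T2.15} the locus $W^1_{55}(C)$ is non-empty of dimension at least $2$, and by Lemma \ref{L2.16} (since $g+r-d=52\geq 0$) it is not contained in $W^2_{55}(C)$, giving pencils with exactly two sections. To force base-point-freeness I would argue, as in the proof of Theorem \ref{T4.3}, that the image of the addition map $C\times W^0_{54}(C)\to W^1_{55}(C)$ parametrizing non-base-point-free members has too small a dimension, using that $C$ is a complete intersection of type $(5,6)$ and hence (by Proposition \ref{p22} and the gonality bound from Theorem \ref{T7.3}) has controlled special linear series; alternatively one could take $L$ to be a general point of a component of $W^1_{55}(C)-W^2_{55}(C)$. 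The vanishing $H^1(C,L(3))=0$ should follow from a Riemann--Roch and Serre-duality count on $C$: $\deg L(3)=55+3\cdot30=145>2g-2=210$ fails, so instead I would check $h^1$ via $h^0(K_C\otimes L^*(-3))$, noting $\deg(K_C\otimes L^*(-3))=210-55-90=65$ and arguing this dual bundle is non-effective for generic $L$, which is where a careful Brill--Noether dimension estimate is needed.

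The main obstacle I anticipate is precisely this last verification of $H^1(L(3))=0$ together with genuine base-point-freeness for a \emph{single} well-chosen $L$ rather than a generic statement, since the sufficient-condition analysis in the preceding subsection (Proposition \ref{P7} and the surrounding remarks) shows that controlling $W^1_d(C)$ for complete intersection curves of this genus is delicate and the author explicitly lacks the $K3$-type dimension estimate there. A cleaner alternative, which I would pursue if the direct construction proves awkward, is to bypass Proposition \ref{p2.36} entirely: by Proposition \ref{P2.32} a general sextic ($d=6\leq 15$) can be defined by a linear pfaffian, and then Lemma \ref{L2.33} immediately yields a rank $2$ Ulrich bundle $\mathcal E$ with $c_1(\mathcal E)=\mathcal O_X(d-1)=\mathcal O_X(5)=5H$ on $X$ — provided one knows the very general $X$ (not merely the general one) remains pfaffian, which holds since the pfaffian locus is dense and open in $|\mathcal O(6)|$ and the very general condition only excludes countably many proper closed subsets. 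This second route avoids the Brill--Noether difficulties and is the approach I would expect to complete most safely.
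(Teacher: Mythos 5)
Your concluding ``cleaner alternative'' is precisely the paper's proof: the paper disposes of Proposition \ref{p7.7} in one line by combining Proposition \ref{P2.32} (a general sextic is a linear pfaffian) with Lemma \ref{L2.33} (pfaffian $\Leftrightarrow$ existence of a rank $2$ Ulrich bundle with $c_1=\mathcal O_X(d-1)=5H$). Your added remark that one must reconcile ``general'' with ``very general'' is correct and is a detail the paper glosses over; since being pfaffian is a dense open condition, adjoining its complement to the countable family of excluded proper closed subsets still leaves a nonempty very general locus, so the argument is sound. By contrast, your primary route through the Mukai sequence of Proposition \ref{p2.36} is a genuinely different strategy whose numerics you verify correctly ($\deg L=55=c_2$, $\rho(106,1,55)=2$), but it stalls exactly where you say it does: with $\dim W^1_{55}(C)\geq 2$ so small, the tensoring-by-the-base-locus trick of Section \ref{S3} gives no contradiction (Keem's bound $\dim W^1_e\leq e-5$ for $e\leq 54$ vastly exceeds $2$), so base-point-freeness of a general $g^1_{55}$ is not established, and the vanishing $H^1(L(3))=0$ likewise requires controlling $h^0(K_C\otimes L^*(-3))$ for $L$ constrained to $W^1_{55}(C)$ rather than generic of degree $65$; also note the addition map should be $C\times W^1_{54}(C)\to W^1_{55}(C)$, not $C\times W^0_{54}(C)$. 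Since you explicitly elect the pfaffian route as the one you would complete, the proposal succeeds and coincides with the paper's argument.
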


\begin{proof}
Follows from Lemma \ref{L2.33} and Proposition \ref{P2.32}.\\
\end{proof}

\begin{proposition}\label{p7.8}
On  $X$, there does not exist any Ulrich bundle of odd rank and hence any rank $2$ Ulrich bundle on $X$ is stable.
\end{proposition}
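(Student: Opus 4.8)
The plan is to reduce the statement to a simple parity constraint coming from the Ulrich numerics and the Picard rank of a very general sextic. First I would invoke the Noether--Lefschetz theorem recalled in Section \ref{S1}, which gives $\mathrm{Pic}(X) \cong \mathbb{Z}\cdot H$. Consequently the first Chern class of any vector bundle $\mathcal{E}$ on $X$ has the form $c_1(\mathcal{E}) = kH$ for some integer $k$, and since $H^2 = \deg(X) = 6$ we get $c_1(\mathcal{E})\cdot H = 6k$.

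Next I would apply the numerical characterization of Ulrich bundles on AG surfaces, Proposition \ref{p2.30}. For a very general sextic we have $d = 6$ and $\omega_X = \mathcal{O}_X(2)$, so $m = 2$ and $\tfrac{m+3}{2} = \tfrac{5}{2}$. Condition $(ii)$ of Proposition \ref{p2.30} then reads $c_1(\mathcal{E})\cdot H = \tfrac{5}{2}\cdot 6 \cdot r = 15r$ for a rank $r$ Ulrich bundle $\mathcal{E}$. Combining this with the previous paragraph forces the relation $6k = 15r$, equivalently $2k = 5r$. The left-hand side is even, so $5r$ must be even, which forces $r$ to be even. Hence no Ulrich bundle of odd rank can exist on $X$.

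For the stability of rank $2$ Ulrich bundles I would argue by contradiction using Proposition \ref{p2.34}. By part $(i)$ every Ulrich bundle is slope semistable, so if a rank $2$ Ulrich bundle $\mathcal{E}$ failed to be $\mu_H$-stable it would be strictly semistable; by part $(ii)$ it would then be destabilized by an Ulrich \emph{subbundle} of strictly smaller rank, necessarily of rank $1$. But a rank $1$ Ulrich bundle is an Ulrich bundle of odd rank, whose nonexistence we have just established. This contradiction shows that every rank $2$ Ulrich bundle on $X$ is $\mu_H$-stable (and, by part $(iii)$, equivalently Gieseker-stable).

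I do not anticipate a serious obstacle: the core of the argument is the integrality computation $2k = 5r$, and the only points requiring care are verifying the numerical inputs $d = 6$, $m = 2$, $H^2 = 6$ and reading condition $(ii)$ of Proposition \ref{p2.30} correctly. The one genuinely load-bearing subtlety is that the destabilizing subobject in the stability argument must be a bona fide Ulrich line bundle so that the odd-rank nonexistence applies; this is exactly what Proposition \ref{p2.34}$(ii)$ supplies, so the two halves of the statement fit together without further work.
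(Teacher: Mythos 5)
Your proof is correct and follows essentially the same route as the paper: the numerical condition $c_1(\mathcal E)\cdot H = 15r$ from Proposition \ref{p2.30}$(ii)$ combined with divisibility by $H^2=6$ (equivalently, $\mathrm{Pic}(X)=\mathbb Z H$) rules out odd rank, and stability of a rank $2$ Ulrich bundle then follows from Proposition \ref{p2.34}$(i)$,$(ii)$ since the would-be destabilizing Ulrich subbundle would have rank $1$. The paper states this more tersely but the argument is identical.
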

 
\begin{proof}
Let $\mathcal E$ be an Ulrich bundle of odd rank $r$ on $X$. Then from Proposition \ref{p2.30} $(ii)$ we see that it has degree $15r$ which must be divisible by the degree of the hyperplane class i.e $6$, a contradiction. The stability of a rank $2$ Ulrich bundle follows from Proposition \ref{p2.34} $(i)$ and $(ii)$.\\
\end{proof}

Since we have the number of independent global sections  of a rank $2$ Ulrich bundle $\mathcal E$ is $12$(cf. \cite{Coskundelpezzo}, Corollary $2.5$ $(ii)$), we obtain the following corollary.\\ 

\begin{corollary}\label{c7.7}
The Brill-Noether locus $\mathcal W^{11}_2 - \mathcal W^{12}_2 \subset \mathcal M(2;5H,55)$ is non-empty and hence so is $\mathcal W^r_2$ for all $r \in \{0,...11\}$.\\
\end{corollary}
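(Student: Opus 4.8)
The plan is to produce an explicit point of $\mathcal W^{11}_2 - \mathcal W^{12}_2$ by exhibiting a rank $2$ Ulrich bundle and reading off its number of global sections. First I would invoke Proposition \ref{p7.7} to obtain a rank $2$ Ulrich bundle $\mathcal E$ on $X$. By Remark \ref{r2.31} such a bundle necessarily has $c_1(\mathcal E) = 5H$ and $c_2(\mathcal E) = 55$, and by Proposition \ref{p7.8} it is $\mu_H$ stable; hence $\mathcal E$ defines a point of the moduli space $\mathcal M(2;5H,55)$, so that it makes sense to ask to which Brill-Noether loci it belongs.

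The key numerical input is that a rank $2$ Ulrich bundle on $X$ has exactly $12$ independent global sections (cf.\ \cite{Coskundelpezzo}, Corollary $2.5(ii)$). Recalling that the support of $\mathcal W^k_2$ consists of those $\mathcal E \in \mathcal M(2;5H,55)$ with $h^0(\mathcal E) \geq k+1$, the equality $h^0(\mathcal E) = 12$ gives $h^0(\mathcal E) \geq 12$, so $\mathcal E \in \mathcal W^{11}_2$, while $h^0(\mathcal E) < 13$ shows $\mathcal E \notin \mathcal W^{12}_2$. This places $\mathcal E$ in $\mathcal W^{11}_2 - \mathcal W^{12}_2$ and establishes the non-emptiness asserted in the first half of the statement.

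For the second assertion I would use the evident nested inclusions $\mathcal W^{k+1}_2 \subseteq \mathcal W^k_2$, valid for every $k$ since $h^0 \geq k+2$ implies $h^0 \geq k+1$ (compatible with the inclusion already recorded in Theorem \ref{T4}). As $\mathcal E \in \mathcal W^{11}_2 \subseteq \mathcal W^{10}_2 \subseteq \cdots \subseteq \mathcal W^0_2$, the same bundle lies in every $\mathcal W^r_2$ with $0 \leq r \leq 11$; equivalently, $h^0(\mathcal E) = 12 \geq r+1$ for each such $r$, so all of these loci are non-empty.

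The only genuine subtlety — and the single step where the cited input is indispensable — is the \emph{exactness} of the count $h^0(\mathcal E) = 12$. Non-emptiness of $\mathcal W^{11}_2$ requires only the inequality $h^0 \geq 12$, but to conclude that $\mathcal E$ avoids $\mathcal W^{12}_2$ one genuinely needs that an Ulrich bundle has no more than $12$ sections, which is precisely the content of the referenced corollary. Everything else is a formal consequence of the definition of the $\mathcal W^r_2$ together with the existence and stability statements already established in Propositions \ref{p7.7} and \ref{p7.8}.
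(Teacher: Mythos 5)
Your proposal is correct and follows exactly the paper's own route: existence of a rank $2$ Ulrich bundle (Proposition \ref{p7.7}), its invariants $c_1 = 5H$, $c_2 = 55$ (Remark \ref{r2.31}), its stability (Proposition \ref{p7.8}), and the exact count $h^0(\mathcal E) = 12$ from the cited corollary, followed by the trivial nesting of the loci. Your added emphasis that the upper bound $h^0 \leq 12$ is what keeps $\mathcal E$ out of $\mathcal W^{12}_2$ is exactly the point the paper relies on implicitly.
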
 

\begin{remark}
$(i)$ Note that, Theorem \ref{T4.3} and Corollary \ref{c7.7}  completes a proof of the promised Theorem \ref{PP1}.\\

$(ii)$ In this case we see that that the generalized Brill-noether number $\rho^{11}_2 \geq  37 >0$ (which is same as the expected dimension of the moduli space $\mathcal M(2;5H,55)$) and the corresponding Brill Noether locus $\mathcal W^{11}_2$ is non-empty.\\

$(iii)$  From Remark \ref{r2.31}, it follows that any rank $2$ Ulrich bundle on $X$ is always special and therefore, from Proposition \ref{p2.36} $(b)$, it follows that for a a rank $2$ Ulrich bundle $\mathcal E$ on $X$ there exists a smooth curve $C \in \mathcal L_5$ and a line bundle $L$ with $\text{deg}(L)=55$, $H^1(L(3H)) =0$ and some $\sigma_0,\sigma_1 \in H^0(L)$ defines a base-point free pencil such that $\mathcal E$ is the dual of the kernel of the evaluation map as described in Proposition \ref{p2.36}.\\


$(iv)$ We observe that if $\mathcal E$ is a rank $2$ Ulrich bundle with $c_1(\mathcal E) = 5H$ and $c_2(\mathcal E) =55$, then for any $t \in \mathbb N$, the rank $2$ vector bundle defined by $\mathcal F_t:= \mathcal E(t)$ is an weakly Ulrich bundle with $c_1(\mathcal F_t) =(5+2t)H$ and $c_2(\mathcal F_t) = 55+30t+6t^2$. Note that, $\mathcal F_t$ is also $\mu_H$ stable for every $t$. Therefore, We have the Brill-Noether loci $\mathcal W^1_2 \subset \mathcal M(2;(5+2t)H,55+30t+6t^2)  $ are non-empty. We mention that this loci are different from the ones mentioned in Remark \ref{R4.5}, $(ii)$.\\
\end{remark}

We end this section by a discussion on the possibility of existence higher rank  Ulrich bundles on $X$. In what follows we  show the existence of  a simple Ulrich bundle of  rank $4$ under the hypothesis that there exists atleast $2$ non-isomorphic rank $2$ Ulrich bundles on $X$. The techniques are  mainly from \cite{Coskun1}.\\

Let's assume that there exists a stable Ulrich bundle $\mathcal E_1$ of rank $2r-2$ ( $r\geq 2$). Let's choose a rank $2$ Ulrich bundle on $X$ such that $ \mathcal E_1$ and $\mathcal E_2 $  are non-isomorphic for $r=2$ (Note that, this can be done using  our assumption). Then we have :
\begin{align*}
Ext^1(\mathcal E_1, \mathcal E_2) & \cong Ext^1(\mathcal E_1 \otimes \mathcal  E^*_2 , \mathcal O_X)\\
& \cong Ext^1(\mathcal E_1 \otimes \mathcal E^*_2 \otimes \mathcal O_X(2), \mathcal O_X(2))\\
& \cong H^1(\mathcal E^*_1 \otimes \mathcal E_2)
\end{align*}
This means $\text{dim}(Ext^1(\mathcal E_1, \mathcal E_2)) = h^1(\mathcal E^*_1 \otimes \mathcal E_2)$. Since, $\chi(\mathcal E_2 \otimes \mathcal E^*_1) =-26(r-1)$ (from Proposition \ref{euler}) and $h^0(\mathcal E^*_1 \otimes \mathcal E_2) =0$\footnote{Indeed we have $H^0(\mathcal E^*_1 \otimes \mathcal E_2) \cong Hom(\mathcal E_1 , \mathcal E_2)$ and  it can be shown that both $\mathcal E_1, \mathcal E_2$ are of same slope $15$. Therefore, there can be no non-trivial morphism between $\mathcal E_1$ and $\mathcal E_2$. }, we obtain $h^1(\mathcal E^*_1 \otimes \mathcal E_2)= 26(r-1) + h^2(\mathcal E^*_1 \otimes \mathcal E_2) >0$ for $r \geq 2$. Therefore, we can choose a non-split extension :
\begin{align*}
0 \to \mathcal E_2 \to \mathcal E \to \mathcal E_1 \to 0
\end{align*}
By Lemma \ref{simple}, $\mathcal E$ is simple and hence there exists a simple Ulrich bundle of rank $2r$. This means in particular, under our assumption there exists a simple rank $4$ Ulrich bundle on $X$.\\

\begin{remark}

If  a simple  Ulrich bundle $\mathcal E$ of rank $2r$ is stable, then the corresponding moduli space $\mathcal M(2r; 5rH, 75r^2 -20r)$ is non-empty. We expect that similar calculation regarding higher rank simple Ulrich bundles can be carried out for all general surfaces of even degree between $5$ and $15$.  
\end{remark}


\section*{Acknowledgement}
 I thank Dr. Sarbeswar Pal  for  many valuable comments. I thank Dr. krishanu Dan for answering a question on dimension estimate and Dr. Emre Co\c{s}kun for pointing out the relevant works regarding Ulrich bundles on surfaces. Finally, I thank Prof. Luca Chiantini for pointing out the work on the classification of ACM bundles on general sextic surfaces.

\end{document}